\numberwithin{equation}{section}
\numberwithin{figure}{section}
\theoremstyle{plain}
\newtheorem{Theorem}{Theorem}[section]
\newtheorem{Lemma}[Theorem]{Lemma}
\newtheorem{Proposition}[Theorem]{Proposition}
\newtheorem{Corollary}[Theorem]{Corollary}
\newtheorem{TheoremAlph}{Theorem}
\theoremstyle{definition}
\newtheorem{Definition}[Theorem]{Definition}
\newtheorem*{Definition*}{Definition}
\newtheorem{Remark}[Theorem]{Remark}
\newcommand{\EatDot}[1]{}
\newcommand{\C}{\mathbb{C}}
\newcommand{\R}{\mathbb{R}}
\newcommand{\Z}{\mathbb{Z}}
\newcommand{\kk}{\Bbbk}
\newcommand{\G}{\mathbf{G}}
\newcommand{\B}{\mathbf{B}}
\newcommand{\T}{\mathbf{T}}
\newcommand{\Hom}{\mathrm{Hom}}
\newcommand{\Ext}{\mathrm{Ext}}
\newcommand{\id}{\mathrm{id}}
\newcommand{\Cdot}{\boldsymbol{\cdot}}
\newcommand{\Stab}{\mathrm{Stab}}
\newcommand{\Rep}{\mathrm{Rep}}
\newcommand{\Tilt}{\mathrm{Tilt}}
\newcommand{\GL}{\mathrm{GL}}
\newcommand{\gfd}{\mathrm{gfd}}
\newcommand{\pr}{\mathrm{pr}}
\newcommand{\rad}{\mathrm{rad}}
\newcommand{\regRep}{\mathrm{\underline{Re}p}}
\newcommand{\Ver}{\mathrm{Ver}}
\providecommand{\abs}[1]{\lvert#1\rvert}
\title[Linkage and translation for tensor products]{Linkage and translation\\for tensor products of representations\\of simple algebraic groups and quantum groups}
\author{Jonathan Gruber}
\address{Department of Mathematics, FAU Erlangen-Nuremberg, 91058 Erlangen, Germany}
\email{jonathan.gruber@fau.de}
\subjclass{20G05 (primary), 20G42, 17B10, 17B55 (secondary)}
\keywords{algebraic group, quantum group, tensor product, tensor ideal, linkage principle, translation principle}
\date{\today}
\begin{document}

\begin{abstract}
	Let $\G$ be either a simple linear algebraic group over an algebraically closed field of characteristic $\ell>0$ or a quantum group at an $\ell$-th root of unity.
	We define a tensor ideal of \emph{singular $\G$-modules} in the category $\Rep(\G)$ of finite-dimensional $\G$-modules and study the associated quotient category $\regRep(\G)$, called the \emph{regular quotient}.
	For $\ell \geq h$, the Coxeter number of $\G$, we establish a `linkage principle' and a `translation principle' for tensor products:
	Let $\regRep_0(\G)$ be the essential image in $\regRep(\G)$ of the principal block of $\Rep(\G)$.
	We first show that $\regRep_0(\G)$ is closed under tensor products in $\regRep(\G)$.
	Then we prove that the monoidal structure of $\regRep(\G)$ is governed to a large extent by the monoidal structure of $\regRep_0(\G)$.
	These results can be combined to give an external tensor product decomposition $\regRep(\G) \cong \Ver(\G) \boxtimes \regRep_0(\G)$, where $\Ver(\G)$ denotes the Verlinde category of $\G$.
\end{abstract}

\maketitle

\section*{Introduction}

In the representation theory of groups and of Hopf algebras,
it is often helpful to decompose categories of representations as a direct sum of blocks.
One can then hope to obtain stronger or more fine-grained results by considering one block of the category at a time.
However, this strategy is generally not well suited for understanding the monoidal structure of the category, because a tensor product of two representations, each belonging to a given block, may have indecomposable direct summands in many different blocks.
The main results of this article provide a way of partially overcoming this obstacle, for categories of representations of simple algebraic groups (over fields of positive characteristic) and quantum groups (at roots of unity).
More precisely, we use minimal tilting complexes (see \cite{GruberMinimalTilting}) to define a tensor ideal of \emph{singular modules} in the representation categories.
When considering representations modulo this tensor ideal, it turns out that the principal block is closed under tensor products and that the monoidal structure of the entire category is governed to a large extent by the resulting monoidal structure on the principal block.
We refer to these results as a \emph{linkage principle} and a \emph{translation principle} for tensor products, in analogy with the classical results describing the block decomposition of the categories in question (due to H.H.\ Andersen \cite{AndersenLinkage} and J.C.\ Jantzen \cite{JantzenZurCharakterformel}).
In the following, we briefly recall linkage and translation before discussing our results in more detail.
\medskip

The categories of (finite-dimensional) representations of simple algebraic groups and of quantum groups have many structural properties in common, which  often makes it possible to treat the two cases simultaneously.
We refer to the representation theory of simple algebraic groups as the \emph{modular case} and to the representation theory of quantum groups as the \emph{quantum case}, and we
fix the following notational conventions:\smallbreak

\noindent
\begin{tabular}{p{0.22\textwidth}p{0.7\textwidth}}
	\textbf{The modular case} & Here $\G$ is a simply connected simple linear algebraic group over an algebraically closed field of characteristic $\ell>0$.
	We write $\Rep(\G)$ for the category of finite-dimensional rational $\G$-modules. \vspace{.1cm}
\end{tabular}

\begin{tabular}{p{0.22\textwidth}p{0.7\textwidth}}	
	\textbf{The quantum case} & Here $\G = U_\zeta(\mathfrak{g})$ is the specialization at a complex $\ell$-th root of unity $\zeta$ of Lusztig's divided powers version of the quantum group corresponding to a complex simple Lie algebra $\mathfrak{g}$.
	We write $\Rep(\G)$ for the category of finite-dimensional $\G$-modules of type $1$.
\end{tabular}
\smallbreak
\noindent
In either of the two cases, $\G$ comes equipped with a simple root system $\Phi$ and a weight lattice $X$.
For this introduction (and for most of this article), we suppose that $\ell \geq h$, the Coxeter number of $\Phi$.
In the quantum case, we further assume that $\ell$ is odd (and not divisible by $3$ if $\Phi$ is of type $\mathrm{G}_2$).
From now on, we use the term \emph{$\G$-module} to refer to the objects of $\Rep(\G)$; in particular, all $\G$-modules that we consider are implicitly assumed to be finite-dimensional.
The category $\Rep(\G)$ is a highest weight category with weight poset $X^+ \subseteq X$ the set of dominant weights with respect to a fixed positive system $\Phi^+ \subseteq \Phi$, and we denote by
\[ L(\lambda) , \qquad \Delta(\lambda) , \qquad \nabla(\lambda) , \qquad T(\lambda) \]
the simple $\G$-module, the standard module, the costandard module and the indecomposable tilting $\G$-module of highest weight $\lambda \in X^+$.

Let $W_\mathrm{fin}$ and $W_\mathrm{aff} = \Z\Phi \rtimes W_\mathrm{fin}$ be the finite Weyl group and the affine Weyl group of $\Phi$, respectively, and denote the natural embedding $\Z\Phi \to W_\mathrm{aff}$ by $\gamma \mapsto t_\gamma$.
We consider the \emph{$\ell$-dilated dot action} of $W_\mathrm{aff}$ on $X$, defined by
\[ t_\gamma w \Cdot \lambda = w( \lambda + \rho ) - \rho + \ell\gamma \]
for $\gamma \in \Z\Phi$, $w \in W_\mathrm{fin}$ and $\lambda \in X$, where $\rho = \frac12 \sum_{\alpha\in\Phi^+} \alpha$.
Finally, we write $C_\mathrm{fund}$ for the fundamental alcove in $X_\R = X \otimes_\Z \R$ (see Subsection \ref{subsec:linkagetranslation}).
For a weight $\lambda \in \overline{C}_\mathrm{fund} \cap X$, the \emph{linkage class} $\Rep_\lambda(\G)$ of $\lambda$ is the
full subcategory of $\Rep(\G)$ whose objects are the $\G$-modules all of whose composition factors have highest weight in $X^+ \cap W_\mathrm{aff} \Cdot \lambda$.
The \emph{linkage principle} asserts that $\Rep(\G)$ admits a decomposition
\[ \Rep(\G) = \bigoplus_{\lambda \in \overline{C}_\mathrm{fund} \cap X} \Rep_\lambda(\G) . \]
The linkage class $\Rep_0(\G)$ is called the \emph{principal block} of $\G$.%
\footnote{Not all of the linkage classes are blocks (in the usual sense that they can not be decomposed any further), but those corresponding to weights in $C_\mathrm{fund} \cap X$ are.
A precise description of the blocks of $\Rep(\G)$ can be found in Section II.7.2 of \cite{Jantzen}.}
Furthermore, for $\lambda,\mu \in C_\mathrm{fund} \cap X$, the linkage classes of $\lambda$ and $\mu$ are related via a \emph{translation functor}
\[ T_\lambda^\mu \colon \Rep_\lambda(\G) \longrightarrow \Rep_\mu(\G) , \]
which is an equivalence of categories with quasi-inverse $T_\mu^\lambda$.

Our linkage principle and translation principle for tensor products build on the notion of \emph{singular $\G$-modules}, which we define below, using minimal tilting complexes and negligible tilting modules.
As explained in \cite{GruberMinimalTilting}, associated to every $\G$-module $M$, there is
a unique (up to isomorphism) bounded minimal complex $C_\mathrm{min}(M)$ of tilting $\G$-modules, called the minimal tilting complex of $M$, such that 
\[ H^i\big( C_\mathrm{min}(M) \big) \cong \begin{cases} M & \text{if } i=0 , \\ 0 & \text{if } i \neq 0 . \end{cases} \]
A tilting $\G$-module is called negligible if it has no direct summands $T(\lambda)$ with $\lambda \in C_\mathrm{fund} \cap X$.
It is well-known that the negligible tilting modules form a tensor ideal in the category of tilting $\G$-modules
(see \cite{GeorgievMathieuFusion,AndersenParadowskiFusionCategories}).
Now we are ready to give the key definition:

\begin{Definition*}
	A $\G$-module $M$ is called \emph{singular} if all terms of $C_\mathrm{min}(M)$ are negligible.
	Otherwise, we say that $M$ is \emph{regular}.%
	\footnote{Our terminology is justified by the fact that, for $\lambda \in X^+$, the simple $\G$-module $L(\lambda)$ is regular if and only if its highest weight $\lambda$ is \emph{$\ell$-regular}, i.e.\ if $\lambda \in W_\mathrm{aff} \Cdot \lambda^\prime$ for some $\lambda^\prime \in C_\mathrm{fund} \cap X$ (see Lemma \ref{lem:regularquotientregularweight} below).}
\end{Definition*}

By results of \cite{GruberTensorIdeals}, the singular $\G$-modules form a tensor ideal in $\Rep(\G)$.
For every $\G$-module $M$, we can write
\[ M \cong M_\mathrm{sing} \oplus M_\mathrm{reg} , \]
where $M_\mathrm{sing}$ is the direct sum of all singular indecomposable direct summands of $M$ and where $M_\mathrm{reg}$ is the direct sum of all regular indecomposable direct summands of $M$ (for a fixed Krull--Schmidt decomposition).
By the Krull--Schmidt theorem, the $\G$-modules $M_\mathrm{sing}$ and $M_\mathrm{reg}$ are well-defined up to isomorphism, but there may be no canonical choice of direct sum decomposition $M \cong M_\mathrm{sing} \oplus M_\mathrm{reg}$.
We call $M_\mathrm{sing}$ and $M_\mathrm{reg}$ the \emph{singular part} and the \emph{regular part} of $M$, respectively.

We can now state our linkage principle for tensor products, which asserts that the monoidal structure of $\Rep(\G)$ is compatible with the decomposition into linkage classes, when we consider $\Rep(\G)$ modulo the tensor ideal of singular $\G$-modules.
Suppose for the rest of the introduction that $\ell \geq h$, the Coxeter number of $\G$ (so that $C_\mathrm{fund} \cap X$ is non-empty).

\begin{TheoremAlph} \label{thm:introlinkage}
	Let $\lambda \in C_\mathrm{fund} \cap X$ and let $M$ and $N$ be $\G$-modules in the linkage classes of $0$ and $\lambda$, respectively.
	Then $(M \otimes N)_\mathrm{reg}$ belongs to the linkage class of $\lambda$.
\end{TheoremAlph}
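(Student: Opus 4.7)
The plan is to work with the minimal tilting complex of $M \otimes N$ and to track which indecomposable tilting modules can occur among its terms. Using the block decomposition
\[ M \otimes N = \bigoplus_{\mu \in \overline{C}_\mathrm{fund} \cap X} (M \otimes N)_\mu \]
together with the fact that the regular part decomposes compatibly as $(M \otimes N)_\mathrm{reg} = \bigoplus_\mu \bigl((M \otimes N)_\mu\bigr)_\mathrm{reg}$, the theorem reduces to showing that $(M \otimes N)_\mu$ is singular for every $\mu \in \overline{C}_\mathrm{fund} \cap X$ with $\mu \neq \lambda$.

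The key input is that $C_\mathrm{min}(M) \otimes C_\mathrm{min}(N)$ is a bounded complex of tilting modules whose cohomology is concentrated in degree $0$ and equal to $M \otimes N$ (by a K\"unneth-type argument). Hence $C_\mathrm{min}(M \otimes N)$ is obtained from it by iteratively removing contractible direct summands, a process which can only delete indecomposable tilting summands, never create new ones. Now every indecomposable tilting summand of $C_\mathrm{min}(M)$ has the form $T(\sigma)$ with $\sigma \in W_\mathrm{aff} \Cdot 0 \cap X^+$, the unique non-negligible one being $T(0) = \kk$; similarly, the only non-negligible indecomposable tilting summand of $C_\mathrm{min}(N)$ is $T(\lambda)$. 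Since negligible tilting modules form a tensor ideal, every product $T(\sigma) \otimes T(\sigma')$ with at least one negligible factor is itself negligible, while $T(0) \otimes T(\lambda) = T(\lambda)$. Consequently, $T(\lambda)$ is the only non-negligible indecomposable tilting module that can appear in any term of $C_\mathrm{min}(M) \otimes C_\mathrm{min}(N)$, and therefore of $C_\mathrm{min}(M \otimes N)$.

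To conclude, fix $\mu \in \overline{C}_\mathrm{fund} \cap X$ with $\mu \neq \lambda$. The minimal tilting complex of $(M \otimes N)_\mu$ is the $\mu$-block summand of $C_\mathrm{min}(M \otimes N)$, whose indecomposable summands are of the form $T(\nu)$ with $\nu \in W_\mathrm{aff} \Cdot \mu \cap X^+$. If $\mu$ lies on a wall of $C_\mathrm{fund}$, then every such $\nu$ is on a wall, so $T(\nu)$ is negligible; if instead $\mu \in C_\mathrm{fund} \cap X$, then $T(\nu)$ is non-negligible only for $\nu = \mu$, which is ruled out by the previous paragraph since $\mu \neq \lambda$. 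In either case $(M \otimes N)_\mu$ is singular, and the theorem follows. I expect the main obstacle to be the bookkeeping around how $C_\mathrm{min}$ interacts with tensor products and with block decomposition, which is where one leans on the minimal tilting complex formalism from \cite{GruberMinimalTilting}.
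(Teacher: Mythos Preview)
Your proof is correct and follows essentially the same approach as the paper: the paper's Lemma~\ref{lem:linkageclassesregularquotient} (together with Lemma~\ref{lem:regularpartlinkageclass}) argues in the same way, reducing to the observation that the only non-negligible indecomposable tilting summand arising from $C_\mathrm{min}(M) \otimes C_\mathrm{min}(N)$ is $T(0) \otimes T(\lambda) \cong T(\lambda)$, so that $\pr_\mu(M \otimes N)$ is singular for every $\mu \neq \lambda$. The paper packages the argument slightly differently (applying $\pr_\nu$ before passing to minimal complexes, and treating the more general situation with an $\omega \in \Omega$), but the substance is identical.
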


Note that for $\G$-modules $M$ and $N$ in $\Rep_0(\G)$, Theorem \ref{thm:introlinkage} implies that $(M \otimes N)_\mathrm{reg}$ also belongs to $\Rep_0(\G)$.
The next result is our translation principle for tensor products.

\begin{TheoremAlph} \label{thm:introtranslation}
	Let $M$ and $N$ be $\G$-modules in $\Rep_0(\G)$.
	For $\lambda,\mu \in C_\mathrm{fund} \cap X$, we have
	\[ \big( T_0^\lambda M \otimes T_0^\mu N \big)_\mathrm{reg} \cong \bigoplus_{\nu \in C_\mathrm{fund} \cap X} T_0^\nu ( M \otimes N )_{\mathrm{reg}}^{\oplus c_{\lambda,\mu}^\nu} , \]
	where $c_{\lambda,\mu}^\nu = [ T(\lambda) \otimes T(\mu) : T(\nu) ]_\oplus$ for $\nu \in C_\mathrm{fund} \cap X$.
\end{TheoremAlph}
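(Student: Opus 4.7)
The plan is to reduce Theorem~\ref{thm:introtranslation} to a tensor product computation for tilting modules via a key auxiliary lemma. Recall that $T(\lambda) = L(\lambda) = \Delta(\lambda) = \nabla(\lambda)$ for $\lambda \in C_\mathrm{fund} \cap X$, so $T_0^\lambda M = \pr_\lambda(M \otimes T(\lambda))$ for $M \in \Rep_0(\G)$. The auxiliary lemma I would establish first is: \emph{for $M \in \Rep_0(\G)$ and $\nu \in C_\mathrm{fund} \cap X$, $M \otimes T(\nu) \cong T_0^\nu M \oplus S$ with $S$ singular}. Its proof proceeds via minimal tilting complexes. Because $0$ is the unique element of $C_\mathrm{fund} \cap W_\mathrm{aff} \Cdot 0$, the only non-negligible indecomposable tilting module in $\Rep_0(\G)$ is $T(0) = \kk$; so the only non-negligible tilting summands of $C_\mathrm{min}(M) \otimes T(\nu)$ arise from $T(0) \otimes T(\nu) = T(\nu)$ and all lie in $\Rep_\nu(\G)$. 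Thus the summands of $C_\mathrm{min}(M \otimes T(\nu))$ in blocks other than $\Rep_\nu(\G)$ consist entirely of negligible tilting modules, forcing the corresponding direct summand $S$ of $M \otimes T(\nu)$ to be singular. The same analysis shows that the translation functors $T_0^\nu$ preserve regularity and singularity.

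Granted the lemma, the main computation runs as follows. Two applications of the lemma, combined with the fact that singular modules form a tensor ideal (\cite{GruberTensorIdeals}) to absorb the cross terms, give
\[ T_0^\lambda M \otimes T_0^\mu N \equiv (M \otimes T(\lambda)) \otimes (N \otimes T(\mu)) = M \otimes N \otimes T(\lambda) \otimes T(\mu) \pmod{\text{singular}} . \]
Expanding $T(\lambda) \otimes T(\mu) \cong \bigoplus_{\nu \in X^+} T(\nu)^{\oplus c_{\lambda,\mu}^\nu}$ and noting that $T(\nu)$ is negligible---hence singular---for $\nu \notin C_\mathrm{fund} \cap X$ reduces the right-hand side modulo singular to $\bigoplus_{\nu \in C_\mathrm{fund} \cap X} (M \otimes N \otimes T(\nu))^{\oplus c_{\lambda,\mu}^\nu}$. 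Theorem~\ref{thm:introlinkage} gives $M \otimes N \equiv (M \otimes N)_\mathrm{reg}$ modulo singular with $(M \otimes N)_\mathrm{reg} \in \Rep_0(\G)$, and a third application of the lemma (with $M$ replaced by $(M \otimes N)_\mathrm{reg}$) yields $M \otimes N \otimes T(\nu) \equiv T_0^\nu (M \otimes N)_\mathrm{reg}$ modulo singular. Combining these congruences and passing to regular parts---noting that the right-hand side is already regular, because translation preserves regularity and direct sums of regular modules are regular---delivers the desired isomorphism by Krull--Schmidt.

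I expect the main technical hurdle to be the careful proof of the auxiliary lemma, which relies on the uniqueness of minimal tilting complexes up to contractible direct summands, on the tensor-ideal property of negligible tilting modules, and on the block structure of $\Rep(\G)$. Once the lemma is in place, the rest of the argument is essentially a formal manipulation based on Theorem~\ref{thm:introlinkage} and the tensor product decomposition of $T(\lambda) \otimes T(\mu)$.
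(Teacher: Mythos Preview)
Your approach is essentially the paper's: your auxiliary lemma is Corollary~\ref{cor:regularparttranslation} (deduced from Lemma~\ref{lem:linkageclassesregularquotient} via the same minimal-tilting-complex argument you sketch), and your main computation mirrors the proof of Theorem~\ref{thm:regularparttensorproduct} step by step. One minor point of phrasing: in the final step you need not merely that the right-hand side is \emph{regular} (i.e.\ not singular) but that it has \emph{no} singular indecomposable summands; this follows because $T_0^\nu$ is an equivalence and hence preserves indecomposability as well as regularity, so $T_0^\nu(M\otimes N)_\mathrm{reg}$ is its own regular part.
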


The translation functors $T_0^\lambda$ and $T_0^\mu$ are equivalences for $\lambda,\mu   \in C_\mathrm{fund} \cap X$, so Theorem \ref{thm:introtranslation} implies that the monoidal structure of $\Rep(\G)$ modulo singular $\G$-modules is completely determined by the monoidal structure of $\Rep_0(\G)$ modulo singular $\G$-modules.
Therefore, much like the classical linkage principle and translation principle, the linkage principle and translation principle for tensor products can be used to reduce questions about the structure of tensor products of arbitrary $\G$-modules (modulo singular direct summands) to questions about the structure of tensor products of $\G$-modules in the principal block (see also Remark \ref{rem:translationtensorproduct} below).
We point out that the coefficients $c_{\lambda,\mu}^\nu$ in Theorem \ref{thm:introtranslation} are the structure constants of the Verlinde category $\Ver(\G)$ (i.e.\ the quotient of the category of tilting $\G$-modules by the tensor ideal of negligible tilting modules)
and that they can be computed as an alternating sum of dimensions of weight spaces of Weyl modules (see Subsection \ref{sec:negligible}).

Now let us write $\regRep(\G)$ for the quotient category of $\Rep(\G)$ by the tensor ideal of singular $\G$-modules.
Let $\regRep_0(\G)$ be the essential image of the principal block under the canonical quotient functor $\Rep(\G) \to \regRep(\G)$;
so $\regRep_0(\G)$ is the full subcategory of $\regRep(\G)$ whose objects are the $\G$-modules $M$ such that $M_\mathrm{reg}$ belongs to $\Rep_0(\G)$.
By Theorem \ref{thm:introlinkage}, $\regRep_0(\G)$ is closed under tensor products in $\regRep(\G)$. 
The Theorems \ref{thm:introlinkage} and \ref{thm:introtranslation} can be combined to give the following external tensor product decomposition of $\regRep(\G)$:

\begin{TheoremAlph} \label{thm:introproduct}
	There is an equivalence of $\kk$-linear monoidal categories
	\[ \regRep(\G) \cong \Ver(\G) \boxtimes \regRep_0(\G) \]
\end{TheoremAlph}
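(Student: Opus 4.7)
My plan is to exhibit an explicit $\kk$-linear monoidal equivalence
\[ F \colon \Ver(\G) \boxtimes \regRep_0(\G) \longrightarrow \regRep(\G), \qquad T(\lambda) \boxtimes M \longmapsto T_0^\lambda M. \]
Since $\Ver(\G)$ is semisimple with simple objects $\{T(\lambda) : \lambda \in C_\mathrm{fund} \cap X\}$, every object of the Deligne product decomposes canonically as $\bigoplus_\lambda T(\lambda) \boxtimes M_\lambda$, and this prescription determines $F$ uniquely after $\kk$-linear extension.

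To verify that $F$ is a $\kk$-linear equivalence, I would first establish
\[ \regRep(\G) \;\cong\; \bigoplus_{\lambda \in C_\mathrm{fund} \cap X} \regRep_\lambda(\G), \]
where $\regRep_\lambda(\G)$ denotes the essential image of the linkage class $\Rep_\lambda(\G)$. This should follow from the classical linkage principle together with the observation (cf.\ the footnote to the definition of singular modules) that modules in $\Rep_\lambda(\G)$ for $\lambda \in \overline{C}_\mathrm{fund} \setminus C_\mathrm{fund}$ have only singular composition factors, whence a short argument with minimal tilting complexes shows such modules to be themselves singular. The translation equivalences $T_0^\lambda \colon \Rep_0(\G) \to \Rep_\lambda(\G)$ preserve the tensor ideal of singular modules (being built from tensoring with a fixed module and projecting to a block), so they descend to equivalences $\regRep_0(\G) \to \regRep_\lambda(\G)$. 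This immediately shows $F$ is essentially surjective and fully faithful: on the Deligne product side the Hom spaces reduce to $\Hom_{\Ver(\G)}(T(\lambda), T(\mu)) \otimes_\kk \Hom_{\regRep_0(\G)}(M, N)$, which vanishes for $\lambda \neq \mu$ and equals $\Hom_{\regRep_0(\G)}(M, N)$ for $\lambda = \mu$, matching the corresponding Hom spaces in $\regRep(\G)$.

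The monoidal comparison is where Theorem \ref{thm:introtranslation} enters. On generators,
\[ (T(\lambda) \boxtimes M) \otimes (T(\mu) \boxtimes N) \;\cong\; \bigoplus_{\nu \in C_\mathrm{fund} \cap X} T(\nu)^{\oplus c_{\lambda,\mu}^\nu} \boxtimes (M \otimes N), \]
so $F$ sends this to $\bigoplus_\nu T_0^\nu(M \otimes N)^{\oplus c_{\lambda,\mu}^\nu}$ in $\regRep(\G)$, while $F(T(\lambda) \boxtimes M) \otimes F(T(\mu) \boxtimes N) = T_0^\lambda M \otimes T_0^\mu N$ agrees with $(T_0^\lambda M \otimes T_0^\mu N)_\mathrm{reg}$ in $\regRep(\G)$ and, by Theorem \ref{thm:introtranslation}, is isomorphic to the very same direct sum. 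This yields an object-wise isomorphism $F(X \otimes Y) \cong F(X) \otimes F(Y)$.

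The main obstacle will be promoting these object-wise isomorphisms to a \emph{natural} and \emph{coherent} monoidal structure on $F$ satisfying the pentagon axiom (and the hexagons, in the braided setting). This requires fixing the isomorphism of Theorem \ref{thm:introtranslation} canonically and checking its compatibility under iterated tensor products, most naturally by tracing through the realization of the translation functors as block projections of tensor products with tilting modules, so that the associator for $F$ reduces to the associator in $\Rep(\G)$. Once this naturality and coherence is verified, $F$ becomes the desired $\kk$-linear monoidal equivalence.
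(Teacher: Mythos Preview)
Your outline is sound but the coherence obstacle you flag at the end is self-inflicted, and the paper avoids it entirely by a different choice of functor. Rather than $T(\lambda)\boxtimes M \mapsto T_0^\lambda M$, the paper sets
\[ F(T\boxtimes M) \;=\; T\otimes M \]
for any tilting module $T$ and any $M\in\Rep_0(\G)$. With this definition, the monoidal constraint is simply the braiding isomorphism $T\otimes T'\otimes M\otimes M' \cong T\otimes M\otimes T'\otimes M'$ in $\Rep(\G)$, and the pentagon/unit axioms are inherited directly from $\Rep(\G)$ with no further work. Corollary \ref{cor:translationquotient} then says $T(\lambda)\otimes - \cong T_0^\lambda$ as functors $\regRep_0(\G)\to\regRep(\G)$, so your functor and the paper's are naturally isomorphic; but only the paper's form makes coherence automatic.

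For the equivalence itself, the paper writes down an explicit quasi-inverse
\[ G \;=\; \bigoplus_{\lambda\in C_\mathrm{fund}\cap X} T(\lambda)\boxtimes T_\lambda^0\circ\pr_\lambda(-) \]
and checks $\hat F\circ G\cong\id$ and $G\circ\hat F\cong\id$ using Corollary \ref{cor:translationquotient} and Lemma \ref{lem:singularidealsingularlinkageclasses}; this is cleaner than your Hom-space matching, though your argument also works. Note that Theorem \ref{thm:introtranslation} is not actually invoked in the paper's proof: once the functor is $T\otimes M$, the monoidal comparison is a tautology and the Verlinde coefficients never appear explicitly. Your final sentence (``tracing through the realization of translation functors as block projections of tensor products with tilting modules'') is precisely the paper's starting point; you should begin there rather than arrive at it as a repair.
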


Here, the box product denotes the additive closure of the category whose objects are pairs consisting of
a tilting $\G$-module and a $\G$-module with regular part in $\Rep_0(\G)$,
and whose $\Hom$-spaces are tensor products of $\Hom$-spaces in $\Ver(\G)$ and $\regRep_0(\G)$.
See Section \ref{sec:tensorproductdecomposition} for more details.

To conclude the introduction, we indicate how the paper is organized.
In Section \ref{sec:preliminaries}, we set up our notation and summarize relevant results about representations of algebraic groups and quantum groups.
In Section \ref{sec:minimaltilting}, we recall results about minimal tilting complexes from \cite{GruberMinimalTilting,GruberTensorIdeals} and we investigate in detail the minimal tilting complexes of Weyl modules and simple $\G$-modules (see Propositions \ref{prop:minimalcomplexWeylmodule} and \ref{prop:minimalcomplexsimplemodule}).
In Section \ref{sec:singularmodules}, we define the tensor ideal of singular $\G$-modules and study the associated quotient category.
We first prove more functorial versions of Theorems \ref{thm:introlinkage} and \ref{thm:introtranslation} in Lemma \ref{lem:linkageclassesregularquotient} and Theorem \ref{thm:translationtensorquotient} and then prove the versions of these theorems that are stated above in Lemma \ref{lem:regularpartlinkageclass} and Theorem \ref{thm:regularparttensorproduct}.
In Section \ref{sec:strongregularity}, we discuss a notion of strong regularity, which is more well-behaved with respect to tensor products than the notion of regularity defined above.
This allows us to generalize results of A.\ Parker \cite{ParkerGFD} about the good filtration dimension of Weyl modules and simple $\G$-modules to iterated tensor products these modules in Theorem \ref{thm:tensorproductweylmodulesimplemoduleGFD}.
As an application of our results that may be of independent interest, we use singular $\G$-modules in Section \ref{sec:translation} to prove that a composition of translation functors between linkage classes corresponding to weights in $C_\mathrm{fund} \cap X$ is naturally isomorphic to a translation functor.
Finally, in Section \ref{sec:tensorproductdecomposition}, we prove Theorem \ref{thm:introproduct} and discuss some related external tensor product decompositions, and in Section \ref{sec:examples} we give some examples of regular parts of tensor products for $\G$ of type $\mathrm{A}_2$.

\subsection*{Acknowledgements}

This article contains some of the main results of my PhD thesis, and I would like to express my deepest gratitude to my PhD advisor Donna Testerman.
I would also like to thank Stephen Donkin, Johannes Flake, Thorge Jensen and Daniel Nakano for their
many helpful comments.
This work was funded by the Swiss National Science Foundation under the grants FNS 200020\_175571 and FNS 200020\_207730 and by the Singapore MOE grant R-146-000-294-133.

\section{Preliminaries} \label{sec:preliminaries}

\subsection{Roots and weights}

Let $\Phi$
be a simple root system in a euclidean space $X_\R$ with scalar product $( - \,, - )$.
For $\alpha\in\Phi$, we denote by $\alpha^\vee = \frac{2 \alpha}{( \alpha , \alpha )}$ the coroot of $\alpha$. The weight lattice of $\Phi$ is
\[ X = \{ \lambda \in X_\R \mid ( \lambda , \alpha^\vee ) \in \Z \text{ for all } \alpha \in \Phi \} , \]
and the Weyl group of $\Phi$ is the (finite) subgroup $W_\mathrm{fin} = \langle s_\alpha \mid \alpha\in\Phi \rangle$
of $\GL(X_\R)$ generated by the reflections $s_\alpha$, where $s_\alpha(x) = x - (x , \alpha^\vee) \cdot \alpha$ for $x\in X_\R$.
The index of the root lattice $\Z\Phi$ in the weight lattice $X$ is finite, and the quotient $X / \Z\Phi$ is called the fundamental group of $\Phi$.
Now fix a positive system $\Phi^+ \subseteq \Phi$
corresponding to a base $\Pi$
of $\Phi$, and let
\[ X^+ \coloneqq \{ \lambda \in X \mid (\lambda , \alpha^\vee) \geq 0 \text{ for all } \alpha \in \Phi^+ \} \]
be the set of dominant weights
with respect to $\Phi^+$.
We consider the partial order on $X$ that is defined by $\lambda \geq  \mu$ if and only if $\lambda-\mu$ is a non-negative integer linear combination of positive roots.
Furthermore, we write $\tilde{\alpha}_\mathrm{h}$
and $\alpha_\mathrm{h}$
for the highest root and the highest short root in $\Phi^+$, respectively, with the convention that $\tilde{\alpha}_\mathrm{h}=\alpha_\mathrm{h}$ (and that all roots are short) if $\Phi$ is simply laced.
We let $\rho = \frac12 \sum_{\alpha\in\Phi^+} \alpha$
be the half-sum of all positive roots and write $h = ( \rho , \alpha_\mathrm{h}^\vee ) + 1$ for the Coxeter number
of $\Phi$.
The Weyl group $W_\mathrm{fin}$ is a Coxeter group with simple reflections $S_\mathrm{fin} = \{ s_\alpha \mid \alpha\in\Pi \}$, and we write $w_0 \in W_\mathrm{fin}$ for its longest element.

\subsection{Algebraic groups and quantum groups} \label{sec:algebraicgroupsquantumgroups}

The root system $\Phi$ is at the heart of the structure of two kinds of Lie theoretic objects whose finite-dimensional simple modules are canonically indexed by $X^+$: \emph{simple algebraic groups} (over a field of positive characteristic) and \emph{quantum groups} (at a root of unity).
The representation theory of quantum groups parallels that of algebraic groups to a large extent, so we will mostly treat the two cases simultaneously. When a distinction becomes necessary, we refer to the representation theory of the algebraic group as the \emph{modular case} and to the representation theory of the quantum group as the \emph{quantum case}.

\subsubsection*{The modular case}

We follow the notational conventions from Section II.1 in \cite{Jantzen}.
Let $\G_\Z$ be a split simply-connected simple algebraic group scheme over $\Z$ with split maximal torus $\T_\Z$, such that the root system of $\G_\Z$ with respect to $\T_\Z$ is isomorphic to $\Phi$.
The positive system $\Phi^+ \subseteq \Phi$ determines
a Borel subgroup $\B^+_\Z$, and the negative roots $-\Phi^+$ determine a Borel subgroup $\B_\Z$.
We fix an algebraically closed field $\kk$ of characteristic $\ell>0$
and denote by $\G=\G_\kk$
the simply-connected simple algebraic group scheme over $\kk$ corresponding to $\G_\Z$, with maximal torus $\T=\T_\kk$
and Borel subgroup $\B=\B_\kk$.

\subsubsection*{The quantum case}

We follow the conventions of Sections II.H.1--II.H.6 in \cite{Jantzen}.
Let $\mathfrak{g}$ be the complex simple Lie algebra with root system $\Phi$, let $U_q(\mathfrak{g})$ be its quantized enveloping algebra and let $U_q^\Z(\mathfrak{g})$ be Lusztig's integral form of $U_q(\mathfrak{g})$, a $\Z[q,q^{-1}]$-subalgebra of $U_q(\mathfrak{g})$ generated by divided powers.
Now let $\zeta \in \C$ be a primitive $\ell$-th root of unity, where $\ell \in \Z_{\geq 0}$ is odd and $\ell \neq 3$ if $\Phi$ is of type $\mathrm{G}_2$.
Then there is a unique ring homomorphism $\Z[q,q^{-1}] \to \C$ with $q \mapsto \zeta$, and we define
\[ U_\zeta(\mathfrak{g}) = \C \otimes_{\Z[q,q^{-1}]} U_q^\Z(\mathfrak{g}) \]
as the extension of scalars of $U_q^\Z(\mathfrak{g})$ via this homomorphism.
In order to be able to treat the quantum case and the modular case simultaneously, we write $\kk = \C$ and $\G = U_\zeta(\mathfrak{g})$ in the quantum case.

\subsection{Representation categories}

In the modular case, we write $\Rep(\G)$ for the category of finite-dimensional $\G$-modules (in the sense of Section I.2.3 in \cite{Jantzen}), and in the quantum case we write $\Rep(\G)$ for the category of finite-dimensional $\G$-modules of type $1$ (see Section II.H.10 in \cite{Jantzen}).
In either case, we will from now on refer to the objects of $\Rep(\G)$ as \emph{$\G$-modules}.
For two $\G$-modules $M$ and $N$, we write $\Hom_\G(M,N)$ for the space of homomorphisms from $M$ to $N$ in $\Rep(\G)$ and $\Ext_\G^i(M,N)$ for the $\Ext$-groups.
If $N$ is indecomposable then we denote by $[M:N]_\oplus$ the multiplicity of $N$ in a Krull--Schmidt decomposition of $M$.

The category $\Rep(\G)$ is a highest weight category with weight poset $(X^+,\leq)$ and we write
\[ L(\lambda) , \qquad \nabla(\lambda) , \qquad \Delta(\lambda) , \qquad T(\lambda) \]
for the simple $\G$-module, the costandard module, the standard module and the indecomposable tilting $\G$-module of highest weight $\lambda \in X^+$, respectively.
The costandard modules are also called \emph{induced modules} and the standard modules are called \emph{Weyl modules}.
A $\G$-module is said to have a \emph{good filtration} (or a \emph{Weyl filtration}) if it has a filtration with successive quotients isomorphic to induced modules (or Weyl modules), and a tilting $\G$-module is (by definition) a $\G$-module that admits both a good filtration and a Weyl filtration.
Every tilting $\G$-module decomposes as a direct sum of indecomposable tilting $\G$-modules $T(\lambda)$ with $\lambda \in X^+$.
We write $\Tilt(\G)$ for the full subcategory of tilting $\G$-modules in $\Rep(\G)$.

The category $\Rep(\G)$ also has a rigid monoidal structure and a braiding.
In the modular case, this follows from Section I.2.7 in \cite{Jantzen}, and the braiding is induced by the canonical symmetric braiding on the category of $\kk$-vector spaces.
In the quantum case, $\Rep(\G)$ is rigid monoidal because $\G$ is a Hopf algebra, and the braiding is constructed in \cite[Chapter 32]{LusztigQuantumGroups}.
For all $\lambda \in X^+$, we have $\Delta(\lambda)^* \cong \nabla\big(-w_0(\lambda)\big)$, and a tensor product of two induced modules has a good filtration by results of S.\ Donkin \cite{DonkinGoodFiltration}, O.\ Mathieu \cite{MathieuGoodFiltration} and J.\ Paradowski \cite{ParadowskiGoodFiltration}.
This also implies that $\Tilt(\G)$ is closed under tensor products in $\Rep(\G)$.

\subsection{Affine Weyl group and alcove geometry}

The affine Weyl group $W_\mathrm{aff}$ and the extended affine Weyl group $W_\mathrm{ext}$ of $\G$ are defined by
\[ W_\mathrm{aff} = \Z\Phi \rtimes W_\mathrm{fin} \qquad \text{and} \qquad W_\mathrm{ext} = X \rtimes W_\mathrm{fin} . \]
The canonical embedding of $X$ into $W_\mathrm{ext}$ is denoted by $\lambda \mapsto t_\lambda$.
Since the action of $W_\mathrm{fin}$ on $X / \Z\Phi$ is trivial, $W_\mathrm{aff}$ is a normal subgroup of $W_\mathrm{ext}$ and $W_\mathrm{ext} / W_\mathrm{aff} \cong X / \Z\Phi$.
Furthermore, $W_\mathrm{aff}$ is a Coxeter group with simple reflections $S = \{ s_\alpha \mid \alpha \in \Pi \} \sqcup \{ s_0 \}$, where $s_0 = t_{\alpha_\mathrm{h}} s_{\alpha_\mathrm{h}}$, and we write $\ell \colon W_\mathrm{aff} \to \Z_{\geq 0}$ for the length function with respect to $S$.

We consider the $\ell$-dilated dot action of $W_\mathrm{ext}$ on $X_\R$ given by
\[ t_\lambda w \Cdot x = \ell \cdot \lambda + w(x+\rho)-\rho \]
for $\lambda \in X$, $w \in W_\mathrm{fin}$ and $x \in X_\R$.
For $\beta \in \Phi^+$ and $m \in \Z$, the fixed points of the affine reflection $t_{m\beta} s_\beta$ form an affine hyperplane
\[ H_{\beta,m} = \big\{ x \in X_\R \mathrel{\big|} ( x + \rho , \beta^\vee ) = \ell \cdot m \big\} \]
and we call any connected component of $X_\R \setminus \big( \bigcup_{\beta,m} H_{\beta,m}\big)$ an \emph{alcove}.
A weight $\lambda \in X$ is called \emph{$\ell$-singular} if $\lambda \in H_{\beta,m}$ for some $\beta \in \Phi^+$ and $m \in \Z$ and \emph{$\ell$-regular} if $\lambda \in C$ for some alcove $C \subseteq X_\R$.
The group $W_\mathrm{aff}$ acts simply transitively on the set of alcoves, and for every alcove $C \subseteq X_\R$, the closure $\overline{C}$ is a fundamental domain for the $\ell$-dilated dot action of $W_\mathrm{aff}$ on $X_\R$ and $\overline{C} \cap X$ is a fundamental domain for the $\ell$-dilated dot action of $W_\mathrm{aff}$ on $X$.
The \emph{fundamental alcove} is 
\[ C_\mathrm{fund} \coloneqq \{ x \in X_\R \mid 0 < (x+\rho,\beta^\vee) < \ell \text{ for all } \beta\in\Phi^+ \} . \]
Let us write $\Omega = \Stab_{W_\mathrm{ext}}(C_\mathrm{fund})$ for the stabilizer of $C_\mathrm{fund}$ in $W_\mathrm{ext}$.
As $\overline{C}_\mathrm{fund}$ is a fundamental domain for the action of $W_\mathrm{aff}$ on $X_\R$, we have $W_\mathrm{ext} = W_\mathrm{aff} \rtimes \Omega$ and
\[ \Omega = \Stab_{W_\mathrm{ext}}(C_\mathrm{fund}) \cong W_\mathrm{ext} / W_\mathrm{aff} \cong X / \Z \Phi . \]
We write $x\mapsto \omega_x$
for the canonical epimorphism $W_\mathrm{ext} \to \Omega$ with kernel $W_\mathrm{aff}$.

As $W_\mathrm{aff}$ acts simply transitively on the set of alcoves, there is for every element $x \in W_\mathrm{ext}$ a unique element $x^\prime \in W_\mathrm{aff}$ such that $x \Cdot C_\mathrm{fund} = x^\prime \Cdot C_\mathrm{fund}$, and we extend the length function on $W_\mathrm{aff}$ to $W_\mathrm{ext}$ via $\ell(x) \coloneqq \ell(x^\prime)$.
Observe that
\[ \Omega = \{ x \in W_\mathrm{ext} \mid \ell(x) = 0 \} \]
and $\ell(x\omega) = \ell(x)$ for all $x \in W_\mathrm{ext}$ and $\omega \in \Omega$.

An alcove $C \subseteq X_\R$ is called dominant if $(x+\rho , \alpha^\vee) > 0$ for all $x \in C$ and $\alpha \in \Phi^+$.
We define
\[ W_\mathrm{ext}^+ \coloneqq \{ w \in W_\mathrm{ext} \mid w \Cdot C_\mathrm{fund} \text{ is dominant} \} \]
and $W_\mathrm{aff}^+ \coloneqq W_\mathrm{ext}^+ \cap W_\mathrm{aff}$.
It is well-known that every (left or right) $W_\mathrm{fin}$-coset in $W_\mathrm{ext}$ has a unique element of minimal length, and that we have
\[ W_\mathrm{ext}^+ = \{ w \in W_\mathrm{ext} \mid w \text{ has minimal length in } W_\mathrm{fin} w \} . \]

\subsection{Linkage and translation} \label{subsec:linkagetranslation}

For $\lambda \in \overline{C}_\mathrm{fund} \cap X$, the \emph{linkage class}
$\Rep_\lambda(\G)$
of $\lambda$ is the full subcategory of $\Rep(\G)$ whose objects are the $\G$-modules all of whose composition factors are of the form $L(x\Cdot \lambda)$, for some $x\in W_\mathrm{aff}$.
We say that $\Rep_\lambda(\G)$ is \emph{$\ell$-regular} if $\lambda \in C_\mathrm{fund}$ and that $\Rep_\lambda(\G)$ is \emph{$\ell$-singular} if $\lambda \in \overline{C}_\mathrm{fund} \setminus C_\mathrm{fund}$.
Every $\G$-module $M$ admits a decomposition
\[ M = \bigoplus_{ \lambda \in \overline{C}_\mathrm{fund} \cap X } \pr_\lambda M , \]
where $\pr_\lambda M$ denotes the unique largest submodule of $M$ that belongs to $\Rep_\lambda(\G)$; see Sections II.7.1--3 in \cite{Jantzen}.
This gives rise to a decomposition
\[ \Rep(\G) = \bigoplus_{ \lambda \in \overline{C}_\mathrm{fund} \cap X } \Rep_\lambda(\G) \]
with projection functors $\pr_\lambda \colon \Rep(\G) \to \Rep_\lambda(\G)$.
The linkage class $\Rep_0(\G)$ containing the trivial $\G$-module $L(0) \cong \kk$ is called the \emph{principal block}
of $\G$, and we call
\[ \Rep_{\Omega\Cdot0}(\G) \coloneqq \bigoplus_{\lambda\in\Omega\Cdot0} \Rep_\lambda(\G) \]
the \emph{extended principal block}.

For $\lambda , \mu \in \overline{C}_\mathrm{fund} \cap X$, let $\nu \in X^+$ be the unique dominant weight in the $W_\mathrm{fin}$-orbit of $\mu-\lambda$ and define the translation functor from $\Rep_\lambda(\G)$ to $\Rep_\mu(\G)$ via
\[ T_\lambda^\mu = \pr_\mu\big( L(\nu) \otimes - \big) \colon \Rep_\lambda(\G) \longrightarrow \Rep_\mu(\G) . \]
As observed in Sections II.7.6--7 in \cite{Jantzen}, the simple module $L(\nu)$ in the definition of $T_\lambda^\mu$ can be replaced by any $\G$-module of highest weight $\nu$, such as $\nabla(\nu)$, $\Delta(\nu)$ or $T(\nu)$, without changing $T_\lambda^\mu$ (up to a natural isomorphism).
If $\lambda$ and $\mu$ are $\ell$-regular then $T_\lambda^\mu$ is an equivalence with
\[ T_\lambda^\mu L(x \Cdot \lambda) \cong L(x \Cdot \mu) , \quad T_\lambda^\mu \Delta(x \Cdot \lambda) \cong \Delta(x \Cdot \mu) , \quad T_\lambda^\mu \nabla(x \Cdot \lambda) \cong \nabla(x \Cdot \mu) , \quad T_\lambda^\mu T(x \Cdot \lambda) \cong T(x \Cdot \mu) \]
for all $x \in W_\mathrm{aff}^+$, by Propositions II.7.9, II.7.11 and II.7.15 in \cite{Jantzen}.

\subsection{Negligible tilting modules} \label{sec:negligible}

Suppose from now on that $\ell \geq h$, the Coxeter number of $\G$, so that $C_\mathrm{fund} \cap X \neq \varnothing$.
A tilting $\G$-module $T$ is called \emph{negligible} if $\big[ T : T(\nu) \big]_\oplus = 0$ for all $\nu \in C_\mathrm{fund} \cap X$.
By results of H.H.~Andersen and J.~Pardowski \cite{AndersenParadowskiFusionCategories}, the set $\mathcal{N}$ of negligible tilting modules forms a thick tensor ideal in $\Tilt(\G)$, that is, $\mathcal{N}$ is closed under isomorphism, direct sums, retracts, and tensor products with arbitrary tilting $\G$-modules.
The quotient category\footnote{By the quotient category $\Tilt(\G) / \mathcal{N}$, we mean the category whose objects are the tilting $\G$-modules, and where the $\Hom$-space for two tilting $\G$-modules $T$ and $T^\prime$ is the quotient of $\Hom_\G(T,T^\prime)$ by the subspace of homomorphisms that factor through a negligible tilting module.} $\Ver(\G) = \Tilt(\G) / \mathcal{N}$ is a semisimple monoidal category called the \emph{Verlinde category} or the \emph{semisimplification} of $\Rep(\G)$, cf.\ \cite{EtingofOstrikSemisimplification}.
The tilting modules $T(\lambda)$ with $\lambda \in C_\mathrm{fund} \cap X$ form a set of representatives for the isomorphism classes of indecomposable objects in $\Ver(\G)$, and we write
\[ c_{\lambda,\mu}^\nu \coloneqq [ T(\lambda) \otimes T(\mu) : T(\nu) ]_\oplus , \]
for $\lambda,\mu,\nu \in C_\mathrm{fund} \cap X$, for the structure constants of $\Ver(\G)$.
By Proposition II.E.12 in \cite{Jantzen}, we also have
\[ c_{\lambda,\mu}^\nu = \sum_{x\in W_\mathrm{aff}} (-1)^{\ell(x)} \cdot \dim \Delta(\lambda)_{x\Cdot\nu-\mu} . \]
For later use, we need to establish two elementary properties of these structure constants.
We first prove that they are invariant under the action of $\Omega=\Stab_{W_\mathrm{ext}}(C_\mathrm{fund})$ on $C_\mathrm{fund}$, in the following sense:

\begin{Lemma} \label{lem:VerlindeFundamentalGroup}
	Let $\lambda,\mu,\nu\in C_\mathrm{fund} \cap X$ and $\omega\in\Omega$. Then
	\[ \big[ T(\lambda) \otimes T(\omega\Cdot\mu) : T(\omega\Cdot\nu) \big]_\oplus = \big[ T(\lambda) \otimes T(\mu) : T(\nu) \big]_\oplus . \]
	In particular, we have $T(\lambda) \otimes T(\omega\Cdot 0) \cong T(\omega\Cdot\lambda)$ in $\Ver(\G)$.
\end{Lemma}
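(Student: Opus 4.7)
The plan is to reduce the first identity to the character-theoretic formula
\[ c_{\lambda,\mu}^\nu = \sum_{x\in W_\mathrm{aff}} (-1)^{\ell(x)} \dim \Delta(\lambda)_{x\Cdot\nu-\mu} \]
stated just above the lemma, via a bijective change of variables on $W_\mathrm{aff}$. Writing $\omega = t_\beta v$ with $\beta \in X$ and $v \in W_\mathrm{fin}$, the definition $t_\beta v \Cdot \eta = \ell\beta + v(\eta+\rho) - \rho$ yields the key cancellation
\[ \omega \Cdot \eta_1 - \omega \Cdot \eta_2 \;=\; v(\eta_1 - \eta_2) \qquad \text{for all } \eta_1, \eta_2 \in X_\R. \]

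Next, since $W_\mathrm{aff}$ is normal in $W_\mathrm{ext} = W_\mathrm{aff} \rtimes \Omega$, for every $x \in W_\mathrm{aff}$ the element $y := \omega^{-1} x \omega$ again lies in $W_\mathrm{aff}$, and conjugation by $\omega$ is length-preserving on $W_\mathrm{aff}$: it permutes the set of simple reflections $S$ because $\omega$ permutes the walls of $C_\mathrm{fund}$. Applying $x = \omega y \omega^{-1}$ in the formula for $c_{\lambda,\omega\Cdot\mu}^{\omega\Cdot\nu}$ gives $x \Cdot (\omega\Cdot\nu) = \omega \Cdot (y\Cdot\nu)$, and the cancellation above then shows
\[ x \Cdot (\omega\Cdot\nu) - \omega\Cdot\mu \;=\; v\bigl( y \Cdot \nu - \mu \bigr). \]
Since weight multiplicities in any $\G$-module are $W_\mathrm{fin}$-invariant, the dimensions $\dim \Delta(\lambda)_{v(y\Cdot\nu - \mu)}$ and $\dim \Delta(\lambda)_{y\Cdot\nu - \mu}$ agree, and reindexing the sum by $y$ turns it into $c_{\lambda,\mu}^\nu$.

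For the second claim, the hypothesis $\ell \geq h$ forces $0 \in C_\mathrm{fund}$ (since $\max_{\beta \in \Phi^+}(\rho, \beta^\vee) = h-1 < \ell$), so $\omega \Cdot 0 \in C_\mathrm{fund} \cap X$ and the dot action of $\omega$ restricts to a bijection on $C_\mathrm{fund} \cap X$. Setting $\mu = 0$ in the first identity and using $T(0) \cong \kk$ gives
\[ c_{\lambda,\,\omega \Cdot 0}^{\,\omega \Cdot \nu} \;=\; c_{\lambda, 0}^{\nu} \;=\; \delta_{\lambda,\nu}, \]
and letting $\nu$ range over $C_\mathrm{fund} \cap X$ computes every structure constant appearing in the Verlinde decomposition of $T(\lambda) \otimes T(\omega \Cdot 0)$, forcing it to be isomorphic to $T(\omega \Cdot \lambda)$ in $\Ver(\G)$.

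I don't anticipate a genuine obstacle; the whole argument is bookkeeping around the compatibility of the $\ell$-dilated dot action with the semidirect-product decomposition $W_\mathrm{ext} = W_\mathrm{aff} \rtimes \Omega$. The most delicate point to pin down is that conjugation by $\omega$ preserves length on $W_\mathrm{aff}$; without this the sign $(-1)^{\ell(x)}$ would not transport cleanly to $(-1)^{\ell(y)}$ and the cancellation driving the proof would break down.
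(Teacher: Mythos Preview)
Your proposal is correct and follows essentially the same route as the paper: both arguments apply the Verlinde formula $c_{\lambda,\mu}^\nu = \sum_{x\in W_\mathrm{aff}} (-1)^{\ell(x)} \dim \Delta(\lambda)_{x\Cdot\nu-\mu}$, reindex via the conjugation automorphism $x \mapsto \omega x \omega^{-1}$ of $W_\mathrm{aff}$, and then use the identity $\omega\Cdot\eta_1 - \omega\Cdot\eta_2 = v(\eta_1-\eta_2)$ (for $\omega = t_\beta v$) together with $W_\mathrm{fin}$-invariance of weight multiplicities. Your write-up is in fact slightly more careful than the paper's, which omits the sign $(-1)^{\ell(x)}$ in its displayed sums and does not spell out that conjugation by $\omega$ is length-preserving; you are right that this last point is exactly what is needed to transport the sign.
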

\begin{proof}
	As conjugation by $\omega$ is an automorphism of $W_\mathrm{aff}$, we have
	\[ [ T(\lambda) \otimes T(\omega\Cdot\mu) : T(\omega\Cdot\nu) ]_\oplus = \sum_{x\in W_\mathrm{aff}} \dim \Delta(\lambda)_{x\omega\Cdot\nu-\omega\Cdot\mu} = \sum_{x\in W_\mathrm{aff}} \dim \Delta(\lambda)_{\omega x\Cdot\nu-\omega\Cdot\mu} . \]
	Writing $\omega = t_\gamma w$ with $\gamma\in X$ and $w\in W_\mathrm{fin}$, it is straightforward to see that
	\[ \omega x\Cdot\nu - \omega\Cdot\mu = w( x\Cdot \nu - \mu ) \]
	and therefore $\dim \Delta(\lambda)_{\omega x\Cdot\nu-\omega\Cdot\mu} = \dim \Delta(\lambda)_{x\Cdot\nu-\mu}$. We conclude that
	\[ [ T(\lambda) \otimes T(\omega\Cdot\mu) : T(\omega\Cdot\nu) ]_\oplus = \sum_{x\in W_\mathrm{aff}} \dim \Delta(\lambda)_{x\Cdot\nu-\mu} = [ T(\lambda) \otimes T(\mu) : T(\nu) ]_\oplus , \]
	as claimed.
\end{proof}

\begin{Lemma} \label{lem:Verlindecoefficientnonzero}
	Let $\lambda,\mu\in C_\mathrm{fund} \cap X$ and denote by $\nu$ be the unique dominant weight in the $W_\mathrm{fin}$-orbit of $w_0\lambda+\mu$. Then $\nu \in C_\mathrm{fund} \cap X$ and $c_{\lambda,\mu}^\nu \neq 0$.
\end{Lemma}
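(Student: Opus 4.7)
The statement splits into two parts: $\nu \in C_\mathrm{fund}$, and $c_{\lambda,\mu}^\nu \neq 0$.

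\emph{Alcove containment.} Dominance of $\nu$ already gives $(\nu + \rho, \beta^\vee) > 0$ for $\beta \in \Phi^+$, so it remains to verify $(\nu + \rho, \alpha_\mathrm{h}^\vee) < \ell$. A direct check (using $w_0\rho = -\rho$ and that $-w_0$ permutes $\Phi^+$) shows $C_\mathrm{fund}$ to be stable under $\lambda \mapsto -w_0\lambda$, so both $\mu$ and $-w_0\lambda$ lie in $C_\mathrm{fund}$, yielding $(\mu, \alpha_\mathrm{h}^\vee), (-w_0\lambda, \alpha_\mathrm{h}^\vee) \leq \ell - h$. Writing the dominant representative $\nu$ of $\mu + w_0\lambda$ as $\nu = y(\mu + w_0\lambda)$ for some $y \in W_\mathrm{fin}$, one has
\[ (\nu, \alpha_\mathrm{h}^\vee) = \max_{\beta^\vee \in W_\mathrm{fin} \cdot \alpha_\mathrm{h}^\vee} (\mu + w_0\lambda, \beta^\vee). \]
Each such $\beta^\vee$ is of the form $\pm \gamma^\vee$ for a positive coroot $\gamma^\vee$. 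If $\beta^\vee = \gamma^\vee$, dominance of $-w_0\lambda$ gives $(\mu + w_0\lambda, \gamma^\vee) \leq (\mu, \gamma^\vee) \leq (\mu, \alpha_\mathrm{h}^\vee)$; if $\beta^\vee = -\gamma^\vee$, the symmetric bound yields $(\mu + w_0\lambda, -\gamma^\vee) \leq (-w_0\lambda, \alpha_\mathrm{h}^\vee) = (\lambda, \alpha_\mathrm{h}^\vee)$. Hence $(\nu, \alpha_\mathrm{h}^\vee) \leq \ell - h$, and $(\nu + \rho, \alpha_\mathrm{h}^\vee) \leq \ell - 1 < \ell$ proves $\nu \in C_\mathrm{fund}$.

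\emph{Non-vanishing.} Apply the formula preceding the lemma,
\[ c_{\lambda,\mu}^\nu = \sum_{x \in W_\mathrm{aff}} (-1)^{\ell(x)} \dim \Delta(\lambda)_{x \Cdot \nu - \mu}, \]
and reduce the $W_\mathrm{aff}$-sum to its restriction to $W_\mathrm{fin}$. For $x = t_\gamma y$ with $\gamma \neq 0$, the translation $\ell\gamma$ shifts $y(\nu + \rho) - \rho - \mu$ by at least $\ell$ in some direction. Since $\nu, \mu \in C_\mathrm{fund}$ yield $|(y(\nu + \rho), \beta^\vee)|, (\rho + \mu, \beta^\vee) < \ell$ for coroots $\beta^\vee$ in the orbit of $\alpha_\mathrm{h}^\vee$, while every weight $\eta$ of $\Delta(\lambda)$ pairs with $\alpha_\mathrm{h}^\vee$ in absolute value at most $(\lambda, \alpha_\mathrm{h}^\vee) \leq \ell - h$, the shifted weight $x \Cdot \nu - \mu$ should fall outside the weight set of $\Delta(\lambda)$ for $\gamma \neq 0$. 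The remaining sum over $W_\mathrm{fin}$ coincides with the classical Steinberg formula for the Littlewood--Richardson coefficient $[\Delta(\lambda) \otimes \Delta(\mu) : \Delta(\nu)]$, which is at least $1$ by the Parthasarathy--Ranga~Rao--Varadarajan theorem (valid character-theoretically in arbitrary characteristic), applied with $w_0$ to realize $\nu$ as the dominant representative of $w_0\lambda + \mu$.

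\emph{Main obstacle.} The bound via $\alpha_\mathrm{h}^\vee$ alone is not quite tight enough to rule out every nonzero $\gamma$ in the translation part; one must combine pairings with several coroots and the congruence $x \Cdot \nu - \mu \in \lambda + \Z\Phi$ to rigorously eliminate them. Alternatively, one could argue directly that the $x = e$ term $\dim \Delta(\lambda)_{\nu - \mu} \geq 1$: the congruence $\nu - \mu \equiv \lambda \pmod{\Z\Phi}$ is immediate, and the dominant representative of $\nu - \mu$ is $\leq \lambda$ by the same orbit estimate used in the alcove-containment step.
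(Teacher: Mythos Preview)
Your alcove-containment argument is correct and more explicit than the paper's treatment (which obtains $\nu \in C_\mathrm{fund}$ only as a byproduct at the end).  The non-vanishing half, however, has a genuine gap that you yourself flag in the ``Main obstacle'' paragraph, and neither of your proposed fixes closes it.  The coroot bound via $\alpha_\mathrm{h}^\vee$ is indeed too coarse to kill all $x\in W_\mathrm{aff}\setminus W_\mathrm{fin}$; ``combining pairings with several coroots'' is left entirely unspecified; and your second alternative --- showing $\dim\Delta(\lambda)_{\nu-\mu}\geq 1$ --- controls only a single term of an \emph{alternating} sum and therefore says nothing about positivity of the total.  Worse, the reduction to $W_\mathrm{fin}$ is not even true for arbitrary $\nu\in C_\mathrm{fund}\cap X$: in type $\mathrm{A}_1$ with $\ell=5$ and $\lambda=\mu=3\varpi$, the choice $\nu=2\varpi$ has $s_0\Cdot\nu-\mu=3\varpi$, a weight of $\Delta(3\varpi)$, and indeed $c_{3,3}^2=0$ while the classical coefficient is $1$.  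So any valid argument along your lines would have to exploit the specific PRV choice of $\nu$ in an essential way, and you have not indicated how.

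The paper avoids all of this combinatorics.  It observes, directly from the definition of translation functors, that $T(-w_0\lambda)\cong T_\mu^{-w_0\lambda}T(\mu)$ is a direct summand of $T(-w_0\nu)\otimes T(\mu)$, and that $T(0)\cong T_\lambda^0 T(\lambda)$ is a direct summand of $T(-w_0\lambda)\otimes T(\lambda)$; hence $T(0)$ is a direct summand of $T(-w_0\nu)\otimes T(\lambda)\otimes T(\mu)$.  Since $T(0)$ is non-negligible and the negligible tilting modules form a thick tensor ideal, there must exist $\nu'\in C_\mathrm{fund}\cap X$ with $T(\nu')$ a summand of $T(\lambda)\otimes T(\mu)$ and $T(0)$ a summand of $T(-w_0\nu)\otimes T(\nu')$; Schur's lemma then forces $\nu'=\nu$.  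This is short, uses only facts already available in the paper, and requires neither the affine Verlinde formula nor PRV.
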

\begin{proof}
	As $-w_0\nu$ is the unique dominant weight in the $W_\mathrm{fin}$-orbit of $-w_0\lambda-\mu$,
	we have
	\[ T(-w_0\lambda) \cong T_\mu^{-w_0\lambda} T(\mu) \cong \pr_{-w_0\lambda}\big( T(-w_0\nu) \otimes T(\mu) \big) , \]
	so $T(-w_0\lambda)$ is a direct summand of $T(-w_0\nu) \otimes T(\mu)$.
	Analogously, we see that
	\[ T(0) \cong T_\lambda^0 T(\lambda) \cong \pr_0\big( T(-w_0\lambda) \otimes T(\lambda) \big) , \]
	whence $T(0)$ is a direct summand of $T(-w_0\lambda) \otimes T(\lambda)$ and of $T(-w_0\nu) \otimes T(\mu) \otimes T(\lambda)$.
	Hence there exists a weight $\nu^\prime \in X^+$ such that $T(\nu^\prime)$ is a direct summand of $T(\mu) \otimes T(\lambda)$ and $T(0)$ is a direct summand of $T(-w_0\nu) \otimes T(\nu^\prime)$.
	Now $T(0)$ is non-negligible, and as the negligible tilting modules form a thick tensor ideal in $\Tilt(\G)$, it follows that $T(\nu^\prime)$ is non-negligible and $\nu^\prime \in C_\mathrm{fund} \cap X$.
	Furthermore, the existence of a non-zero homomorphism from the trivial $\G$-module $T(0)$ to the tensor product $T(-w_0\nu) \otimes T(\nu^\prime) \cong L(-w_0\nu) \otimes L(\nu^\prime)$ implies that $L(\nu^\prime) \cong L(-w_0\nu)^* \cong L(\nu)$ by Schur's lemma.
	We conclude that $\nu=\nu^\prime$, so $T(\nu)$ is a direct summand of $T(\lambda) \otimes T(\mu)$, as claimed.
\end{proof}

\section{Minimal tilting complexes} \label{sec:minimaltilting}

In this section, we recall the theory of minimal tilting complexes explained in \cite{GruberMinimalTilting}.
We write $C^b\big( \Tilt(\G) \big)$ for the category of bounded complexes of tilting $\G$-modules, $K^b\big( \Tilt(\G)\big)$ for the bounded homotopy category of $\Tilt(\G)$ and $D^b\big( \Rep(\G) \big)$ for the bounded derived category of $\Rep(\G)$.
Since $\Rep(\G)$ is a highest weight category, the canonical functor
\[ \mathfrak{T} \colon K^b\big(\mathrm{Tilt}(\G)\big) \longrightarrow D^b\big( \Rep(\G) \big) \]
is an equivalence of triangulated categories.
Thus, for every bounded complex $X$ of $\G$-modules, there is a unique homotopy class of bounded complexes $C$ of tilting $\G$-modules such that $C \cong X$ in the derived category.
Furthermore, as $\Tilt(\G)$ is a Krull--Schmidt category, every homotopy class of bounded complexes of tilting $\G$-modules contains a unique \emph{minimal complex} (up to isomorphism of complexes).
The \emph{minimal tilting complex} $C_\mathrm{min}(X)$ of $X$ is the unique (up to isomorphism) bounded minimal complex of tilting $\G$-modules that is isomorphic to $X$ in the derived category.
The minimal tilting complex $C_\mathrm{min}(M)$ of a $\G$-module $M$ is defined as $C_\mathrm{min}( 0 \to M \to 0 )$, with $M$ in degree zero.
By construction, $C_\mathrm{min}(M)$ is the unique bounded minimal complex of tilting $\G$-modules with
\[ H^i\big( C_\mathrm{min}(M) \big) \cong \begin{cases}
M & \text{if } i=0 , \\ 0 & \text{otherwise} .
\end{cases} \]
We next recall some elementary properties of minimal tilting complexes from \cite{GruberMinimalTilting} and \cite{GruberTensorIdeals}.

\begin{Lemma} \label{lem:minimaltiltingcomplex}
	Let $M$, $M_1$ and $M_2$ be $\G$-modules.
	\begin{enumerate}
		\item If $M$ is a tilting module then $C_\mathrm{min}(M)=M$, viewed as a one-term complex with $M$ in degree~$0$.
		\item We have $C_\mathrm{min}(M_1\oplus M_2)\cong C_\mathrm{min}(M_1) \oplus C_\mathrm{min}(M_2)$ in $C^b\big( \mathrm{Tilt}(\G) \big)$.
		\item If $C$ is a bounded complex of tilting modules with $C \cong M$ in $D^b\big( \Rep(\G) \big)$ then $C_\mathrm{min}(M)$ is the minimal complex of $C$ and there is a split monomorphism $C_\mathrm{min}(M) \to C$ in $C^b\big( \mathrm{Tilt}(\G) \big)$.
		\item $C_\mathrm{min}(M_1 \otimes M_2)$ is the minimal complex of $C_\mathrm{min}(M_1) \otimes C_\mathrm{min}(M_2)$. In particular, there is a split monomorphism $C_\mathrm{min}(M_1 \otimes M_2) \to C_\mathrm{min}(M_1) \otimes C_\mathrm{min}(M_2)$ in $C^b\big(  \Tilt(\G) \big)$.
	\end{enumerate}
\end{Lemma}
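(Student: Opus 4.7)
The plan is to verify the four parts sequentially, using in each case the characterization of $C_\mathrm{min}(M)$ as the unique (up to isomorphism) bounded minimal complex of tilting $\G$-modules whose only nonzero cohomology is $M$ in degree $0$. Throughout, I would rely on the Krull--Schmidt property of $\Tilt(\G)$ to invoke the standard fact that any bounded complex $C$ in a Krull--Schmidt additive category decomposes uniquely (up to isomorphism) as $C \cong C^{\min} \oplus C^{\mathrm{contr}}$, with $C^{\min}$ minimal and $C^{\mathrm{contr}}$ contractible; this is the technical backbone already set up in \cite{GruberMinimalTilting}.

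For (1), the one-term complex concentrated in degree $0$ at $M$ has no differentials, so is vacuously minimal, lies in $C^b(\Tilt(\G))$ when $M$ is tilting, and has the correct cohomology; uniqueness of $C_\mathrm{min}(M)$ then forces the identification. For (2), I would take the complex-wise direct sum $C_\mathrm{min}(M_1) \oplus C_\mathrm{min}(M_2)$: it is a bounded complex of tilting modules with cohomology $M_1 \oplus M_2$ in degree $0$. Minimality follows because a contractible direct summand of the sum would, by Krull--Schmidt applied term-by-term and differential-by-differential, force a contractible direct summand in one of the $C_\mathrm{min}(M_i)$, contradicting their individual minimality. Uniqueness then gives the claimed isomorphism.

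For (3), given $C \in C^b(\Tilt(\G))$ with $C \cong M$ in $D^b(\Rep(\G))$, I would invoke the Krull--Schmidt decomposition $C \cong C^{\min} \oplus C^{\mathrm{contr}}$. The contractible summand is zero in the derived category, so $C^{\min} \cong M$ in $D^b(\Rep(\G))$; by uniqueness, $C^{\min} \cong C_\mathrm{min}(M)$. The inclusion of the summand $C^{\min}$ into $C$ provides the required split monomorphism. For (4), I would apply (3) to the complex $C_\mathrm{min}(M_1) \otimes C_\mathrm{min}(M_2)$, which is bounded, has tilting terms (as $\Tilt(\G)$ is closed under tensor products), and satisfies $C_\mathrm{min}(M_1) \otimes C_\mathrm{min}(M_2) \cong M_1 \otimes M_2$ in $D^b(\Rep(\G))$. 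This last quasi-isomorphism is the one point that really needs justification: it follows because the tensor product in $\Rep(\G)$ is the tensor product of underlying $\kk$-vector spaces with the diagonal action, and tensoring over the field $\kk$ is exact, so quasi-isomorphisms are preserved under tensoring with a bounded complex of $\G$-modules.

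I expect the main subtlety to lie in (4): one must resist the temptation to believe that $C_\mathrm{min}(M_1) \otimes C_\mathrm{min}(M_2)$ is itself minimal, since in general the tensor product of minimal complexes acquires contractible direct summands, and one only obtains $C_\mathrm{min}(M_1 \otimes M_2)$ as the minimal summand inside the tensor product. Once the quasi-isomorphism above is recorded, however, (4) reduces cleanly to (3). The other three parts are essentially bookkeeping, and most of the heavy lifting — in particular the existence and uniqueness of minimal complexes in $C^b(\Tilt(\G))$ — is already available from the cited references.
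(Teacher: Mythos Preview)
Your proposal is correct and essentially reconstructs what the paper defers to references: the paper's own proof is a two-line citation (parts (1)--(3) to Remark~2.10 and Lemma~2.12 of \cite{GruberMinimalTilting}, part (4) to Lemma~1.3 of \cite{GruberTensorIdeals}), and the arguments you sketch are precisely the ones underlying those cited statements. Your treatment of (4), in particular the observation that exactness of $-\otimes_\kk-$ forces $C_\mathrm{min}(M_1)\otimes C_\mathrm{min}(M_2)\cong M_1\otimes M_2$ in $D^b(\Rep(\G))$ and then the reduction to (3), is exactly the content of the cited lemma.
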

\begin{proof}
	Parts (1)--(3) are Remark 2.10 and Lemma 2.12 in \cite{GruberMinimalTilting}, and part (4) is Lemma 1.3 in \cite{GruberTensorIdeals}.
\end{proof}

\begin{Lemma} \label{lem:minimaltiltingcomplexlinkageclass}
	Let $\lambda\in \overline{C}_\mathrm{fund} \cap X$ and let $M$ be a $\G$-module in $\Rep_\lambda(\G)$. Then all terms of $C_\mathrm{min}(M)$ belong to $\Rep_\lambda(\G)$.
\end{Lemma}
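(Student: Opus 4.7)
The plan is to apply the block (linkage class) decomposition termwise to the minimal tilting complex and argue that only the $\lambda$-component survives. Write $C_\mathrm{min}(M) = (C^\bullet, d^\bullet)$. Each $C^i$ is a tilting module, and since the projection functors $\pr_\mu \colon \Rep(\G) \to \Rep_\mu(\G)$ are idempotent direct summands of the identity, they preserve good and Weyl filtrations, so each $\pr_\mu C^i$ is again a tilting module. Because $\Hom_\G$ vanishes between distinct linkage classes, the differential $d^i$ splits as a direct sum of maps $\pr_\mu C^i \to \pr_\mu C^{i+1}$. This gives a decomposition of complexes
\[ C_\mathrm{min}(M) = \bigoplus_{\mu \in \overline{C}_\mathrm{fund} \cap X} C_\mu^\bullet , \]
where $C_\mu^\bullet$ is a bounded complex of tilting modules all of whose terms lie in $\Rep_\mu(\G)$.

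Next, I would compute cohomology. Since $H^i$ commutes with finite direct sums of complexes, $H^i(C_\mu^\bullet) = \pr_\mu H^i\big(C_\mathrm{min}(M)\big)$, which equals $M$ if $\mu = \lambda$ and $i = 0$ (using $M \in \Rep_\lambda(\G)$) and is zero otherwise. Hence for every $\mu \neq \lambda$, the complex $C_\mu^\bullet$ is a bounded exact complex of tilting modules, so $C_\mu^\bullet \cong 0$ in $D^b\big(\Rep(\G)\big)$; via the equivalence $\mathfrak{T}$, this means $C_\mu^\bullet$ is null-homotopic in $K^b\big(\Tilt(\G)\big)$.

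To conclude, I would use that $C_\mu^\bullet$, being a direct summand of the minimal complex $C_\mathrm{min}(M)$, is itself minimal: the condition that each differential have image in the radical of its target is preserved under passage to direct summands of Krull--Schmidt decompositions. A null-homotopic minimal complex of tilting modules must be zero, by the uniqueness (up to isomorphism) of minimal complexes in each homotopy class established in \cite{GruberMinimalTilting} and already invoked implicitly in Lemma \ref{lem:minimaltiltingcomplex}(3) (applied with $M = 0$, giving $C_\mathrm{min}(0) = 0$). Therefore $C_\mu^\bullet = 0$ for every $\mu \neq \lambda$, so $C_\mathrm{min}(M) = C_\lambda^\bullet$ has all terms in $\Rep_\lambda(\G)$.

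The only delicate point is the last paragraph: identifying precisely the ``minimal plus null-homotopic implies zero'' principle available from \cite{GruberMinimalTilting} and verifying that minimality passes to the direct summands $C_\mu^\bullet$. Everything else — blockwise splitting of tilting modules, vanishing of cross-block $\Hom$, and commutation of $H^i$ with finite direct sums — is routine.
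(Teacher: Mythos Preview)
Your proof is correct and uses essentially the same idea as the paper: apply the block projection to $C_\mathrm{min}(M)$ and use minimality to conclude. The paper's version is slightly more economical, invoking Lemma \ref{lem:minimaltiltingcomplex}(3) to obtain a split monomorphism $C_\mathrm{min}(M) \to \pr_\lambda C_\mathrm{min}(M)$ directly, rather than decomposing $C_\mathrm{min}(M)$ blockwise and showing each non-$\lambda$ summand is a null-homotopic minimal complex and hence zero.
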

\begin{proof}
	As $M$ belongs to $\Rep_\lambda(\G)$ and the projection functor $\mathrm{pr}_\lambda\colon \Rep(\G) \to \Rep_\lambda(\G)$ is exact, we have
	$ M \cong \mathrm{pr}_\lambda M \cong \mathrm{pr}_\lambda C_\mathrm{min}(M) $
	in $D^b\big( \Rep(\G) \big)$.
	By part (3) of Lemma \ref{lem:minimaltiltingcomplex}, $C_\mathrm{min}(M)$ admits a split monomorphism into $\mathrm{pr}_\lambda C_\mathrm{min}(M)$ in $C^b\big( \Tilt(\G) \big)$, and the claim follows.
\end{proof}

Let us introduce an additional piece of notation: For $\G$-modules $M$ and $N$, we write $M \stackrel{\oplus}{\subseteq} N$ if there exists a split monomorphism from $M$ into $N$.
The next result is Lemma 2.17 in \cite{GruberMinimalTilting}.

\begin{Lemma} \label{lem:minimalcomplexSES}
	Let $X \to Y \to Z \to X[1]$ be a distinguished triangle in $D^b\big( \Rep(\G) \big)$.
	Then
	\[ C_\mathrm{min}(Z)_i \stackrel{\oplus}{\subseteq} C_\mathrm{min}(X)_{i+1} \oplus C_\mathrm{min}(Y)_i \]
	for all $i\in\Z$.
	For $\lambda \in X$ and $i\in\Z$, we have 
	\begin{align*}
	\big[ C_\mathrm{min}(X)_{i+1} : & T(\lambda) \big]_\oplus + \big[ C_\mathrm{min}(Y)_i : T(\lambda) \big]_\oplus
	\geq \big[ C_\mathrm{min}(Z)_i : T(\lambda) \big]_\oplus \\
	& \geq  \big[ C_\mathrm{min}(X)_{i+1} : T(\lambda) \big]_\oplus - \big[ C_\mathrm{min}(X)_i : T(\lambda) \big]_\oplus - \big[ C_\mathrm{min}(X)_{i+2} : T(\lambda) \big]_\oplus \\
	& \hspace{2cm} + \big[ C_\mathrm{min}(Y)_i : T(\lambda) \big]_\oplus - \big[ C_\mathrm{min}(Y)_{i-1} : T(\lambda) \big]_\oplus - \big[ C_\mathrm{min}(Y)_{i+1} : T(\lambda) \big]_\oplus .
	\end{align*}
\end{Lemma}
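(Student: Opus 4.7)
The plan is to reduce everything to manipulations of the mapping cone in $C^b\big(\Tilt(\G)\big)$, combined with the structure of contractible complexes.

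Since $\mathfrak{T}$ is an equivalence, the given distinguished triangle in $D^b\big(\Rep(\G)\big)$ can be represented by a morphism $f \colon C_\mathrm{min}(X) \to C_\mathrm{min}(Y)$ in $K^b\big(\Tilt(\G)\big)$ (choose a genuine chain map representative). The mapping cone $C(f)$ is then a bounded complex of tilting $\G$-modules that is isomorphic to $Z$ in $D^b\big(\Rep(\G)\big)$, and its terms are
\[ C(f)_i = C_\mathrm{min}(X)_{i+1} \oplus C_\mathrm{min}(Y)_i . \]
By part (3) of Lemma \ref{lem:minimaltiltingcomplex}, $C_\mathrm{min}(Z)$ is the minimal complex of $C(f)$ and there is a split monomorphism $C_\mathrm{min}(Z) \to C(f)$ in $C^b\big(\Tilt(\G)\big)$. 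Restricting this split monomorphism to degree $i$ yields the first assertion
\[ C_\mathrm{min}(Z)_i \stackrel{\oplus}{\subseteq} C_\mathrm{min}(X)_{i+1} \oplus C_\mathrm{min}(Y)_i , \]
and taking multiplicities of $T(\lambda)$ on both sides gives the upper bound in the second assertion.

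For the lower bound, write $C(f) \cong C_\mathrm{min}(Z) \oplus K$ in $C^b\big(\Tilt(\G)\big)$. As $C(f)$ and $C_\mathrm{min}(Z)$ agree in the derived category, $K$ is a contractible bounded complex of tilting $\G$-modules. Since $\Tilt(\G)$ is Krull–Schmidt, any such contractible complex decomposes as a direct sum of elementary two-term complexes of the form $\cdots \to 0 \to A_j \stackrel{\id}{\to} A_j \to 0 \to \cdots$ concentrated in degrees $j$ and $j+1$, so that
\[ K_i \cong A_i \oplus A_{i-1} \quad \text{for tilting modules } A_j \in \Tilt(\G) . \]
For each $j$, the summand $A_j$ embeds as a direct summand into both $K_j$ and $K_{j+1}$, and in turn $K_j$ is a direct summand of $C(f)_j$. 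This is the main structural input; everything else is bookkeeping.

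Combining these observations, for any $\lambda \in X$ we obtain
\[ [K_i : T(\lambda)]_\oplus = [A_i : T(\lambda)]_\oplus + [A_{i-1} : T(\lambda)]_\oplus , \]
and the two summands on the right are bounded by $[C(f)_{i+1} : T(\lambda)]_\oplus$ and $[C(f)_{i-1} : T(\lambda)]_\oplus$ respectively, both expanded via $C(f)_j = C_\mathrm{min}(X)_{j+1} \oplus C_\mathrm{min}(Y)_j$. Subtracting $[K_i : T(\lambda)]_\oplus$ from $[C(f)_i : T(\lambda)]_\oplus = [C_\mathrm{min}(Z)_i : T(\lambda)]_\oplus + [K_i : T(\lambda)]_\oplus$ and substituting the bounds yields precisely the claimed lower bound. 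The only nontrivial step is the structural decomposition of a contractible bounded complex of tilting modules into elementary pieces, which is where Krull–Schmidt is essential; the rest is a straightforward multiplicity count.
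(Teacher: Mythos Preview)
Your argument is correct. The paper does not actually prove this lemma; it merely records it as ``Lemma 2.17 in \cite{GruberMinimalTilting}'' and imports the statement wholesale. Your proof via the mapping cone of a chain map $f \colon C_\mathrm{min}(X) \to C_\mathrm{min}(Y)$, followed by the decomposition of the contractible complement $K$ into elementary two-term complexes, is the standard and natural approach in a Krull--Schmidt setting, and is almost certainly what the cited reference does as well. The multiplicity bookkeeping checks out: writing $C(f)_j = C_\mathrm{min}(X)_{j+1} \oplus C_\mathrm{min}(Y)_j$, your bound $[A_i:T(\lambda)]_\oplus \leq [C(f)_{i+1}:T(\lambda)]_\oplus$ and $[A_{i-1}:T(\lambda)]_\oplus \leq [C(f)_{i-1}:T(\lambda)]_\oplus$ yields exactly
\[
[C_\mathrm{min}(Z)_i:T(\lambda)]_\oplus \geq [C(f)_i:T(\lambda)]_\oplus - [C(f)_{i-1}:T(\lambda)]_\oplus - [C(f)_{i+1}:T(\lambda)]_\oplus,
\]
which unpacks to the six-term expression in the statement.
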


Now we proceed to study the minimal complexes of some specific $\G$-modules.
Let us assume from now on that $\ell \geq h$, the Coxeter number of $\G$, and recall that we write $x \mapsto \omega_x$ for the canonical epimorphism $W_\mathrm{ext} = W_\mathrm{aff} \rtimes \Omega \to \Omega$, where $\Omega=\Stab_{W_\mathrm{ext}}(C_\mathrm{fund})$.

\begin{Proposition} \label{prop:minimalcomplexWeylmodule}
	Let $x\in W_\mathrm{ext}^+$ and $\lambda\in C_\mathrm{fund} \cap X$, and write $C_\mathrm{min}\big( \Delta(x\Cdot \lambda) \big)$ as
	\[ \cdots \xrightarrow{~d_{-2}~} T_{-1} \xrightarrow{~d_{-1}~} T_0 \xrightarrow{~\,d_0\,~} T_1 \xrightarrow{~\,d_1\,~} \cdots . \]
	Then
	\begin{enumerate}
		\item $T_i=0$ for all $i<0$ and all $i>\ell(x)$;
		\item if $\nu \in X^+$ and $i\in\Z$ such that $[T_i:T(\nu)]_\oplus \neq 0$ then $\nu = y\omega_x\Cdot \lambda$ for some $y\in W_\mathrm{aff}^+$ with \[  0 \leq i \leq \ell(x)-\ell(y) ; \]
		\item $T_0 \cong T(x\Cdot \lambda)$ and $T_{\ell(x)} \cong T(\omega_x\Cdot\lambda)$;
		\item $T_i$ is negligible for all $i \neq \ell(x)$.
	\end{enumerate}
\end{Proposition}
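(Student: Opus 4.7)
The plan is to prove (1)--(4) simultaneously by induction on $\ell(x)$. The base case $\ell(x)=0$ is immediate: then $x=\omega_x\in\Omega$, so $x\Cdot\lambda\in C_\mathrm{fund}\cap X$ and $\Delta(x\Cdot\lambda)=L(x\Cdot\lambda)=T(x\Cdot\lambda)$ is already tilting. By Lemma~\ref{lem:minimaltiltingcomplex}(1), $C_\mathrm{min}(\Delta(x\Cdot\lambda))$ is a one-term complex in degree zero, and all four claims hold trivially (with $\omega_x=x$ and $y=e$).

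For the inductive step $\ell(x)\geq 1$, I would pick a simple reflection $s\in S$ with $\ell(xs)<\ell(x)$ and set $x'=xs$; since right descents preserve minimality within a $W_\mathrm{fin}$-coset, $x'\in W_\mathrm{ext}^+$ with $\ell(x')=\ell(x)-1$ and $\omega_{x'}=\omega_x$. Choose $\mu\in \overline{C}_\mathrm{fund}\cap X$ lying on exactly the wall of $C_\mathrm{fund}$ fixed (via $\omega_x$) by $s$; this is possible because $\ell\geq h$. Jantzen's wall-crossing theory (Proposition II.7.19/II.E.11, transported by $\omega_x$) produces a short exact sequence
\[ 0\longrightarrow \Delta(x\Cdot\lambda)\longrightarrow T_\mu^\lambda\Delta(x'\Cdot\mu)\longrightarrow \Delta(x'\Cdot\lambda)\longrightarrow 0. \]
Because $T_\lambda^\mu$ and $T_\mu^\lambda$ are exact and preserve $\Tilt(\G)$, Lemma~\ref{lem:minimaltiltingcomplex}(3) exhibits $C_\mathrm{min}(T_\mu^\lambda\Delta(x'\Cdot\mu))$ as a split summand of $T_\mu^\lambda T_\lambda^\mu C_\mathrm{min}(\Delta(x'\Cdot\lambda))$. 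Combining this with the inductive description of $C_\mathrm{min}(\Delta(x'\Cdot\lambda))$ and the standard effect of wall-crossing on indecomposable tiltings (each $T(z\omega_x\Cdot\lambda)$ produces tilting summands whose highest weights lie in $\{z\omega_x\Cdot\lambda,zs''\omega_x\Cdot\lambda\}$, with $s''$ the $\omega_x$-conjugate of $s$), every term of $C_\mathrm{min}(T_\mu^\lambda\Delta(x'\Cdot\mu))$ has the shape required by (2) with the length parameter for $y$ shifted by at most one. Applying Lemma~\ref{lem:minimalcomplexSES} to the distinguished triangle of the SES (rotated so that $\Delta(x\Cdot\lambda)$ sits in the $Z$-position) then bounds $C_\mathrm{min}(\Delta(x\Cdot\lambda))_i$ by summands of the two neighbouring complexes in nearby degrees, from which (1) and (2) follow directly.

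For (3), the identity $T_0\cong T(x\Cdot\lambda)$ is forced because $\Delta(x\Cdot\lambda)$ has unique head $L(x\Cdot\lambda)$, so $T(x\Cdot\lambda)$ must appear as the unique summand of $T_0$ covering this head; minimality together with the bound from (2) rules out any other summand in degree zero. The identity $T_{\ell(x)}\cong T(\omega_x\Cdot\lambda)$ and the negligibility claim (4) are proved jointly by tracking the single non-negligible summand through the induction: an Euler characteristic computation in the Verlinde category $\Ver(\G)$, using Lemmas~\ref{lem:VerlindeFundamentalGroup} and~\ref{lem:Verlindecoefficientnonzero}, forces exactly one copy of $T(\omega_x\Cdot\lambda)$ to persist in the top degree, while minimality of the tilting complex forbids any spurious non-negligible summands in intermediate degrees. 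The main obstacle will be making (3) and (4) precise rather than mere inclusions: Lemma~\ref{lem:minimalcomplexSES} yields only one-sided bounds, so pinning down the exact top-degree summand and ruling out stray non-negligible contributions in intermediate degrees requires combining the Verlinde Euler characteristic with minimality and careful multiplicity bookkeeping at each inductive step.
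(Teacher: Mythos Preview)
Your overall architecture---induction on $\ell(x)$, the wall-crossing short exact sequence, and Lemma~\ref{lem:minimalcomplexSES} applied to the rotated triangle---matches the paper exactly. But there are two genuine gaps in how you finish.

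First, your proposed route to (4) does not work. An Euler-characteristic computation in $\Ver(\G)$ only controls alternating sums of non-negligible multiplicities, and ``minimality forbids spurious non-negligible summands in intermediate degrees'' is not true in general: a minimal complex can perfectly well have non-negligible summands in consecutive degrees. The paper's argument for (4) is direct and avoids this entirely: since $\mu\notin C_\mathrm{fund}$, \emph{every} tilting module in $\Rep_\mu(\G)$ is negligible, and $T_\mu^\lambda$ preserves negligibility (being a summand of tensoring with a tilting). Hence $T_\mu^\lambda T_\lambda^\mu$ sends \emph{all} tiltings to negligible tiltings, so every term $A_i$ of $C_\mathrm{min}(T_\mu^\lambda\Delta(x'\Cdot\mu))$ is negligible. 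Combined with the inductive hypothesis on the $B_i$, this makes $C_i=A_i\oplus B_{i-1}$ negligible for all $i\neq\ell(x)$, and (4) follows from the upper bound in Lemma~\ref{lem:minimalcomplexSES}. You wrote down the wall-crossing functor but never used this key property of it.

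Second, you say Lemma~\ref{lem:minimalcomplexSES} gives only one-sided bounds, but it gives a lower bound too. Once (4) is in hand, the paper uses that lower bound (with $C_{\ell(x)-1}$ negligible and $C_{\ell(x)+1}=0$) to pin down $[T_{\ell(x)}:T(\omega_x\Cdot\lambda)]_\oplus=1$ exactly; no Verlinde Euler characteristic is needed. For $T_0\cong T(x\Cdot\lambda)$, your ``unique head'' argument is not how tilting resolutions work; the paper instead uses the short exact sequence $0\to\Delta(x\Cdot\lambda)\to T(x\Cdot\lambda)\to M\to 0$ with $M$ Weyl-filtered, so $C_\mathrm{min}(M)_{-1}=0$ and Lemma~\ref{lem:minimalcomplexSES} forces $T_0\stackrel{\oplus}{\subseteq} T(x\Cdot\lambda)$.
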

\begin{proof}
	For $x^\prime \coloneqq x\omega_x^{-1}$, we have $x^\prime \in W_\mathrm{aff}^+$ and $\ell(x^\prime) = \ell(x)$.
	Hence, after replacing $x$ by $x^\prime$ and $\lambda$ by $\omega_x \Cdot \lambda$, we may (and shall) assume that $x\in W_\mathrm{aff}^+$ and $\omega_x = e$.
	
	We prove the claims by induction on $\ell(x)$.
	If $\ell(x)=0$ then $x=e$ and $\Delta(\lambda) \cong T(\lambda)$, so $\Delta(\lambda)$ has minimal tilting complex $0 \to T(\lambda) \to 0$ and all claims are satisfied.
	Now suppose that $\ell(x)>0$ and that the proposition holds for all $y \in W_\mathrm{aff}^+$ with $\ell(y) < \ell(x)$.
	Then we can choose a simple reflection $s\in S$ with $xs\in W_\mathrm{aff}^+$
	and $xs\Cdot\lambda < x\Cdot\lambda$.
	Let $\mu\in \overline{C}_\mathrm{fund} \cap X$ with $\Stab_{W_\mathrm{aff}}(\mu)=\{e,s\}$, and consider the short exact sequence
	\begin{equation} \label{eq:SESweylmodules}
	 0 \longrightarrow \Delta(x\Cdot \lambda) \longrightarrow T_\mu^\lambda \Delta(x\Cdot\mu) \longrightarrow \Delta(xs\Cdot\lambda) \longrightarrow 0 ,
	\end{equation}
	which is obtained from the short exact sequence in \cite[Proposition II.7.19]{Jantzen} by taking duals.
	Furthermore, let us write
	\[ C_\mathrm{min}\big( T_\mu^\lambda \Delta(x\Cdot\mu) \big) = (\cdots \to A_i \to A_{i+1} \to \cdots) , \qquad  C_\mathrm{min}\big( \Delta(xs\Cdot\lambda) \big) = (\cdots \to B_i \to B_{i+1} \to \cdots) .  \]
	The short exact sequence \eqref{eq:SESweylmodules} gives rise to a distinguished triangle
	\[  T_\mu^\lambda \Delta(x\Cdot\mu) \longrightarrow \Delta(xs\Cdot\lambda) \longrightarrow \Delta(x\Cdot \lambda)[1] \longrightarrow T_\mu^\lambda \Delta(x\Cdot\mu)[1] \]
	in $D^b\big( \Rep(\G) \big)$, and Lemma \ref{lem:minimalcomplexSES} implies that $T_i$ is a direct summand of $C_i \coloneqq A_i \oplus B_{i-1}$ for all $i \in \Z$.
	By the induction hypothesis, we may assume that $B_i=0$ for $i<0$ and $i > \ell(xs) = \ell(x)-1$, that $B_i$ is negligible for $i\neq \ell(x)-1$, that $B_{\ell(x)-1} \cong T(\lambda)$ and that all weights $\nu\in X^+$ with $[ B_i : T(\nu) ]_\oplus \neq 0$ for some $i \in \Z$ are of the form $y\Cdot\lambda$ for some $y\in W_\mathrm{aff}^+$ with $0 \leq i \leq \ell(xs) - \ell(y)$.
	By Proposition 7.11 in \cite{Jantzen}, we have
	\[ \Delta(x\Cdot\mu) = \Delta(xs\Cdot\mu) \cong T_\lambda^\mu \Delta(xs\Cdot\lambda) , \]
	and it follows that $T_\mu^\lambda \Delta(x\Cdot\mu)$ is isomorphic to the complex $T_\mu^\lambda T_\lambda^\mu C_\mathrm{min}\big( \Delta(xs\Cdot\lambda) \big)$ in $D^b\big( \Rep(\G) \big)$.
	Using Lemma \ref{lem:minimaltiltingcomplex}, we conclude that $A_i$ is a direct summand of $T_\mu^\lambda T_\lambda^\mu B_i$ for all $i\in\Z$, and it follows that $A_i=0$ for $i<0$ and $i>\ell(x)-1$.
	Further note that all tilting modules in $\Rep_\mu(\G)$ are negligible because $\mu\notin C_\mathrm{fund}$ and that the translation functor $T_\mu^\lambda$ sends negligible tilting modules to negligible tilting modules, because negligible tilting modules form a thick tensor ideal in $\Tilt(\G)$.
	It follows that the functor $T_\mu^\lambda \circ T_\lambda^\mu$ sends all tilting modules to negligible tilting modules, so $T_\mu^\lambda T_\lambda^\mu B_i$ and $A_i$ are negligible for all $i\in\Z$.
	We conclude that $C_i = A_i \oplus B_{i-1} = 0$ for $i<0$ and $i>\ell(x)$, that $C_i$ is negligible for all $i\neq \ell(x)$ and that
	\[ C_{\ell(x)} \cong A_{\ell(x)} \oplus B_{\ell(x)-1} \cong T(\lambda) . \]
	As $T_i$ is a direct summand of $C_i$ for all $i\in\Z$, this implies that $T_i=0$ for $i<0$ and $i>\ell(x)$ and that $T_i$ is negligible for all $i\neq \ell(x)$.
	Furthermore, Lemma \ref{lem:minimalcomplexSES} yields
	\[ 1 = [ C_{\ell(x)} : T(\lambda) ]_\oplus \geq [ T_{\ell(x)} : T(\lambda) ]_\oplus \geq [ C_{\ell(x)} : T(\lambda) ]_\oplus - [ C_{\ell(x)-1} : T(\lambda) ]_\oplus - [ C_{\ell(x)+1} : T(\lambda) ]_\oplus = 1 \]
	because $C_{\ell(x)-1}$ is negligible and $C_{\ell(x)+1}=0$, and we conclude that $T_{\ell(x)} \cong T(\lambda)$.
	
	Now suppose that $\nu\in X^+$ such that 
	\[ 0 \neq \big[ A_i : T(\nu) \big]_\oplus \leq \big[ T_\mu^\lambda T_\lambda^\mu B_i : T(\nu) \big]_\oplus + \big[ B_i : T(\nu) \big]_\oplus \]
	for some $i\in\Z$.
	If $\big[ B_i : T(\nu) \big]_\oplus \neq 0$ then there is $y \in W_\mathrm{aff}^+$ with $\nu = y \Cdot \lambda$ and $0 \leq i \leq \ell(xs)-\ell(y)$, and if $\big[ T_\mu^\lambda T_\lambda^\mu B_i : T(\nu) \big]_\oplus \neq 0$ then there is $y \in W_\mathrm{aff}^+$ with $0 \leq i \leq \ell(xs)-\ell(y)$ such that $T(\nu)$ is a direct summand of $T_\mu^\lambda T_\lambda^\mu T(y\Cdot\lambda)$.
	By weight considerations, $T_\mu^\lambda T_\lambda^\mu T(y\Cdot\lambda)$ is a direct sum of indecomposable tilting modules of the form $T(z\Cdot\lambda)$ for $z \in W_\mathrm{aff}^+$ with $\ell(z) \leq \ell(y)+1$, and it follows that (in either case) there is $y^\prime \in W_\mathrm{aff}^+$ with $\nu = y^\prime \Cdot \lambda$ and
	\[ 0 \leq i \leq \ell(xs) - \ell(y^\prime) + 1 = \ell(x) - \ell(y^\prime) , \]
	matching (2).
	
	It remains to show that $T_0 \cong T(x\Cdot\lambda)$.
	By Section II.E.4 in \cite{Jantzen}, there is a short exact sequence
	\[ 0 \longrightarrow \Delta(x\Cdot\lambda) \longrightarrow T(x\Cdot\lambda) \longrightarrow M \longrightarrow 0 , \]
	where $M$ is a $\G$-module with a Weyl filtration.
	By Corollary 2.16 in \cite{GruberMinimalTilting}, we have $C_\mathrm{min}(M)_i=0$ for $i<0$, and using Lemma \ref{lem:minimalcomplexSES} as above, we obtain
	\[ T_0 \stackrel{\oplus}{\subseteq} C_\mathrm{min}\big( T(x\Cdot\lambda) \big)_0 \oplus C_\mathrm{min}(M)_{-1} \cong T(x\Cdot\lambda) , \]
	whence $T_0 \cong T(x\Cdot\lambda)$, as required.
\end{proof}

\begin{Proposition} \label{prop:minimalcomplexsimplemodule}
	Let $x\in W_\mathrm{ext}^+$ and $\lambda\in C_\mathrm{fund} \cap X$, and write $C_\mathrm{min}\big( L(x\Cdot \lambda) \big)$ as
	\[ \cdots \xrightarrow{~d_{-2}~} T_{-1} \xrightarrow{~d_{-1}~} T_0 \xrightarrow{~\,d_0\,~} T_1 \xrightarrow{~\,d_1\,~} \cdots . \]
	Then
	\begin{enumerate}
		\item $T_i \cong T_{-i}$ for all $i\in\Z$;
		\item $T_i=0$ for all $i\in\Z$ with $\abs{i}>\ell(x)$;
		\item if $\mu \in X^+$ and $i\in\Z$ with $[T_i:T(\mu)]_\oplus \neq 0$ then $\mu = y\omega_x\Cdot \lambda$ for some $y\in W_\mathrm{aff}^+$ with $\abs{i} \leq \ell(x)-\ell(y)$;
		\item $[T_0 : T(x\Cdot \lambda)]_\oplus = 1$ and $T_{\ell(x)} \cong T_{-\ell(x)} \cong T(\omega_x\Cdot\lambda)$;
		\item $T_{\ell(x)-1} \cong T_{1-\ell(x)}$ is negligible.
	\end{enumerate}
\end{Proposition}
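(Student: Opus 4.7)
The plan is to deduce the proposition by induction on $\ell(x)$, comparing the minimal tilting complex of $L(x\Cdot\lambda)$ with that of the Weyl module $\Delta(x\Cdot\lambda)$ (Proposition \ref{prop:minimalcomplexWeylmodule}) via the short exact sequence
\[ 0 \longrightarrow K \longrightarrow \Delta(x\Cdot\lambda) \longrightarrow L(x\Cdot\lambda) \longrightarrow 0 \]
with $K = \rad \Delta(x\Cdot\lambda)$. As in the proof of Proposition \ref{prop:minimalcomplexWeylmodule}, replacing $x$ by $x\omega_x^{-1} \in W_\mathrm{aff}^+$ and $\lambda$ by $\omega_x\Cdot\lambda$, I reduce to the case $\omega_x = e$ and $x \in W_\mathrm{aff}^+$. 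For part (1), I invoke the contravariant duality $M \mapsto M^\tau$ on $\Rep(\G)$ (from a Chevalley anti-involution, or its quantum analogue), which fixes every simple module and every indecomposable tilting module. Dualising $C_\mathrm{min}\big(L(x\Cdot\lambda)\big)$ and reversing the grading produces a minimal complex of tilting modules quasi-isomorphic to $L(x\Cdot\lambda)^\tau \cong L(x\Cdot\lambda)$, so uniqueness of the minimal tilting complex forces $T_i \cong T_i^\tau \cong T_{-i}$.

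The base case $\ell(x)=0$ is immediate since then $x\in\Omega$ and $L(x\Cdot\lambda) \cong T(x\Cdot\lambda)$. For the inductive step, I first establish the following helper statement by induction on composition length, using the rotation of the distinguished triangle of a short exact sequence $0 \to A \to B \to C \to 0$ to obtain $C_\mathrm{min}(B)_i \stackrel{\oplus}{\subseteq} C_\mathrm{min}(A)_i \oplus C_\mathrm{min}(C)_i$ from Lemma \ref{lem:minimalcomplexSES}: \emph{if $M$ is a $\G$-module all of whose composition factors are of the form $L(y\Cdot\lambda)$ with $y \in W_\mathrm{aff}^+$ and $\ell(y) \leq N$, and the proposition is already known for these $y$, then $C_\mathrm{min}(M)_i = 0$ for $|i|>N$, and every indecomposable summand of $C_\mathrm{min}(M)_i$ has the form $T(z\Cdot\lambda)$ with $z \in W_\mathrm{aff}^+$ and $\ell(z) \leq N - |i|$.} By the strong linkage principle, $K$ has composition factors $L(y\Cdot\lambda)$ with $y \in W_\mathrm{aff}^+$ and $\ell(y) < \ell(x)$, so applying the helper with $N = \ell(x)-1$ bounds $C_\mathrm{min}(K)$ accordingly.

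Feeding this into Lemma \ref{lem:minimalcomplexSES} for the triangle $K \to \Delta(x\Cdot\lambda) \to L(x\Cdot\lambda) \to K[1]$ gives $T_i \stackrel{\oplus}{\subseteq} C_\mathrm{min}(K)_{i+1} \oplus C_\mathrm{min}\big(\Delta(x\Cdot\lambda)\big)_i$. For $i > \ell(x)$ both summands vanish, proving (2) for positive $i$, and parts (3) and the $T_{\ell(x)} \cong T(\lambda)$ half of (4) follow by combining these summand bounds with Proposition \ref{prop:minimalcomplexWeylmodule} and the sharper alternating-sum inequality of Lemma \ref{lem:minimalcomplexSES}. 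For the multiplicity $[T_0 : T(x\Cdot\lambda)]_\oplus = 1$ in (4), I rotate the triangle to $L(x\Cdot\lambda)[-1] \to K \to \Delta(x\Cdot\lambda) \to L(x\Cdot\lambda)$, which yields $C_\mathrm{min}\big(\Delta(x\Cdot\lambda)\big)_0 \stackrel{\oplus}{\subseteq} C_\mathrm{min}(K)_0 \oplus T_0$; as $C_\mathrm{min}(K)_0$ admits no $T(x\Cdot\lambda)$ summand by the helper's highest weight bound, $T(x\Cdot\lambda) \cong C_\mathrm{min}\big(\Delta(x\Cdot\lambda)\big)_0$ must appear in $T_0$, and the reverse bound is automatic. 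For part (5), by (3) the only possibly non-negligible summand of $T_{\ell(x)-1}$ is $T(\lambda)$, and the upper bound $[T_{\ell(x)-1} : T(\lambda)]_\oplus \leq [C_\mathrm{min}(K)_{\ell(x)} : T(\lambda)]_\oplus + [C_\mathrm{min}\big(\Delta(x\Cdot\lambda)\big)_{\ell(x)-1} : T(\lambda)]_\oplus$ vanishes because $C_\mathrm{min}(K)$ is supported in degrees $\leq \ell(x)-1$ and $C_\mathrm{min}\big(\Delta(x\Cdot\lambda)\big)_{\ell(x)-1}$ is negligible by Proposition \ref{prop:minimalcomplexWeylmodule}(4). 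The symmetry (1) extends everything to negative $i$. The main obstacle will be the bookkeeping in the helper lemma and making sure the alternating-sum bounds of Lemma \ref{lem:minimalcomplexSES} collapse cleanly at the critical degrees $\pm\ell(x)$ and $\pm(\ell(x)-1)$.
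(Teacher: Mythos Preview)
Your proposal is correct and follows essentially the same strategy as the paper: reduce to $x\in W_\mathrm{aff}^+$, prove (1) via the contravariant duality $(-)^\tau$, and then induct on $\ell(x)$ using the short exact sequence $0\to \rad\Delta(x\Cdot\lambda)\to\Delta(x\Cdot\lambda)\to L(x\Cdot\lambda)\to 0$, bounding $C_\mathrm{min}(\rad\Delta(x\Cdot\lambda))$ by a composition-series induction and then reading off (2)--(5) from Lemma~\ref{lem:minimalcomplexSES} together with Proposition~\ref{prop:minimalcomplexWeylmodule}. The only point where you deviate is in establishing $[T_0:T(x\Cdot\lambda)]_\oplus\geq 1$: you rotate the triangle to $L(x\Cdot\lambda)[-1]\to K\to\Delta(x\Cdot\lambda)\to L(x\Cdot\lambda)$ and use the resulting split embedding $C_\mathrm{min}\big(\Delta(x\Cdot\lambda)\big)_0\stackrel{\oplus}{\subseteq} C_\mathrm{min}(K)_0\oplus T_0$, whereas the paper applies the alternating-sum lower bound of Lemma~\ref{lem:minimalcomplexSES} directly; both arguments are equally valid and of comparable length.
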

\begin{proof}
	As in the proof of Proposition \ref{prop:minimalcomplexWeylmodule}, we can replace $x$ by $x\omega_x^{-1} \in W_\mathrm{aff}^+$ and $\lambda$ by $\omega_x\Cdot\lambda \in C_\mathrm{fund} \cap X$, so we will henceforth assume that $x \in W_\mathrm{aff}^+$ and $\omega_x = e$.
	By Sections II.2.12 and II.E.6 in \cite{Jantzen}, there is a contravariant duality functor $M \mapsto M^\tau$ on $\Rep(\G)$ (denoted by $M \mapsto \prescript{\tau}{}{M}$ in \cite{Jantzen}) which fixes all simple $\G$-modules and all tilting $\G$-modules.
	(In the quantum case, this functor can be constructed using the involution $\omega$ from Lemma 4.6 in \cite{JantzenLecturesQuantumGroups}.)
	Thus, the complex
	\[ \cdots \xrightarrow{~\,d_1^\tau\,~} T_1^\tau \xrightarrow{~\,d_0^\tau\,~} T_0^\tau \xrightarrow{~d_{-1}^\tau~} T_{-1}^\tau \xrightarrow{~d_{-2}^\tau~} \cdots \]
	is a minimal tilting complex of $L(x\Cdot\lambda)$, and by uniqueness, we have $T_i \cong T_{-i}^\tau \cong T_{-i}$ for all $i$.
	We prove the remaining claims by induction on~$\ell(x)$.
	If $\ell(x)=0$ then $x=e$ and $L(\lambda) \cong T(\lambda)$, so $L(\lambda)$ has minimal tilting complex $0 \to T(\lambda) \to 0$ and all claims are satisfied.
	Now suppose that $\ell(x)>0$ and that the proposition holds for all $y \in W_\mathrm{aff}^+$ with $\ell(y) < \ell(x)$.
	Consider the short exact sequence
	\[ 0 \longrightarrow \rad_\G \Delta(x\Cdot\lambda) \longrightarrow \Delta(x\Cdot\lambda) \longrightarrow L(x\Cdot\lambda) \longrightarrow 0 \]
	from \cite[Section II.2.14]{Jantzen} and the minimal tilting complexes
	\[ C_\mathrm{min}\big( \rad_\G \Delta(x\Cdot\lambda) \big) = (\cdots \to A_i \to A_{i+1} \to \cdots) , \qquad C_\mathrm{min}\big( \Delta(x\Cdot\lambda) \big) = (\cdots \to B_i \to B_{i+1} \to \cdots) , \]
	and observe that $T_i$ is a direct summand of $C_i \coloneqq A_{i+1} \oplus B_i$ for all $i \in \Z$, by Lemma \ref{lem:minimalcomplexSES}.
	By the induction hypothesis and the linkage principle, we may assume that (1)--(5) are satisfied for the minimal tilting complexes of all composition factors of $\rad_\G \Delta(x\Cdot\lambda)$.
	Using Lemma \ref{lem:minimalcomplexSES} and induction on the length of a composition series of $\rad_\G \Delta(x\Cdot \lambda)$, we see that every weight $\mu\in X^+$ with $[A_i:T(\mu)]_\oplus \neq 0$ for some $i\in\Z$ is of the form $y\Cdot\lambda$, for some $y\in W_\mathrm{aff}^+$ with $\abs{i} \leq \ell(x)-\ell(y)-1$.
	In particular, we have $A_i=0$ for all $i\in\Z$ with $\abs{i}\geq \ell(x)$. 
	Now recall from Proposition \ref{prop:minimalcomplexWeylmodule} that $B_i$ is negligible for all $i\neq \ell(x)$, that $B_{\ell(x)}\cong T(\lambda)$ and that every weight $\mu\in X^+$ with $[B_i:T(\mu)]_\oplus\neq 0$ for some $i\in\Z$ is of the form $y\Cdot\lambda$, for some $y\in W_\mathrm{aff}^+$ with $\abs{i} \leq \ell(x)-\ell(y)$.
	As $C_i = A_{i+1} \oplus B_i$ for all $i\in\Z$, we conclude that every weight $\mu\in X^+$ with $[C_i:T(\mu)]_\oplus\neq 0$ for some $i\in\Z$ is of the form $y\Cdot\lambda$, for some $y\in W_\mathrm{aff}^+$ with $\abs{i} \leq \ell(x)-\ell(y)$.
	Furthermore, we have $A_{i+1}=0$ for $i\geq \ell(x)-1$, so $C_{\ell(x)-1} = B_{\ell(x)-1}$ is negligible and $C_{\ell(x)}=B_{\ell(x)}\cong T(\lambda)$. The claims (2), (3) and (5) are now immediate because $T_i$ is a direct summand of $C_i$ for all $i \in \Z$.
	The first part of claim (4) follows from Lemma \ref{lem:minimalcomplexSES} because
	\[ [ C_0 : T(x\Cdot\lambda) ]_\oplus = [ A_1 : T(x\Cdot\lambda) ]_\oplus + [ B_0 : T(x\Cdot\lambda) ]_\oplus = 1 \] and
	$ [ C_{-1} : T(x\Cdot\lambda) ]_\oplus = 0 = [ C_1 : T(x\Cdot\lambda) ]_\oplus $,
	and therefore
	\[ 1 = [ C_0 : T(x\Cdot\lambda) ]_\oplus \geq [ T_0 : T(x\Cdot\lambda) ]_\oplus \geq [ C_0 : T(x\Cdot\lambda) ]_\oplus - [ C_{-1} : T(x\Cdot\lambda) ]_\oplus - [ C_1 : T(x\Cdot\lambda) ]_\oplus = 1 . \]
	Analogously, we have $C_{\ell(x)} \cong T(\lambda)$ and
	$ [ C_{\ell(x)-1} : T(\lambda) ]_\oplus = 0 = [ C_{\ell(x)+1} : T(\lambda) ]_\oplus $
	because $C_{\ell(x)-1}$ is negligible and $C_{\ell(x)+1}=0$.
	Using Lemma \ref{lem:minimalcomplexSES} again, it follows that $T_{\ell(x)} \cong T(\lambda)$.
\end{proof}

\begin{Remark}
	In the quantum case, Propositions \ref{prop:minimalcomplexWeylmodule} and \ref{prop:minimalcomplexsimplemodule} can also be derived from Theorem 4.8 in \cite{GruberMinimalTilting}, using combinatorial properties of Kazhdan-Lusztig polynomials.
\end{Remark}

\section{The ideal of singular \texorpdfstring{$\G$}{G}-modules} \label{sec:singularmodules}

For the rest of the article, we assume that $\ell \geq h$, the Coexeter number of $\G$.
Let $\mathcal{I}$ be a thick tensor ideal in $\Tilt(\G)$.
In \cite[Lemma 2.3]{GruberTensorIdeals}, it is shown that the set of $\G$-modules
\[ \langle \mathcal{I} \rangle \coloneqq \big\{ M \in \Rep(\G) \mathop{\big|} \text{all terms of } C_\mathrm{min}(M) \text{ belong to } \mathcal{I} \big\} \]
is a thick tensor ideal with the \emph{2/3-property}:
For any short exact sequence
$ 0 \to A \to B \to C \to 0 $
of $\G$-modules such that two of the $\G$-modules $A$, $B$ and $C$ belong to $\langle \mathcal{I} \rangle$, the third also belongs to $\langle \mathcal{I} \rangle$.
Furthermore, by Lemma 2.4 in \cite{GruberTensorIdeals}, $\langle \mathcal{I} \rangle$ is the smallest thick tensor ideal with the 2/3-property in $\Rep(\G)$ that contains $\mathcal{I}$.
Recall from Subsection \ref{sec:negligible} that we write $\mathcal{N}$ for the thick tensor ideal of negligible tilting modules.

\begin{Definition}
	We call $\langle \mathcal{N} \rangle$ the ideal of \emph{singular $\G$-modules}
	and say that a $\G$-module is \emph{regular}
	if it does not belong to $\langle \mathcal{N} \rangle$.
	We refer to the quotient category $\regRep(\G) \coloneqq \Rep(\G) / \langle \mathcal{N} \rangle$ as the \emph{regular quotient}
	of $\Rep(\G)$ and write $q \colon \Rep(\G) \to \regRep(\G)$ for the quotient functor.
\end{Definition}

The quotient category $\regRep(\G)$ has the same objects as $\Rep(\G)$, but for two $\G$-modules $M$ and $N$, the space of homomorphisms from $M$ to $N$ in $\regRep(\G)$ is the quotient of $\Hom_\G(M,N)$ by the space of homomorphisms that factor through a singular $\G$-module.
Since the set $\langle \mathcal{N} \rangle$ of singular $\G$-modules is closed under retracts, a $\G$-module $M$ is regular if and only if $q(M)$ is non-zero in $\regRep(\G)$.
Furthermore, $\regRep(\G)$ inherits the Krull--Schmidt property from $\Rep(\G)$.

We first prove two results that justify our terminology.

\begin{Lemma} \label{lem:singularidealsingularlinkageclasses}
	The ideal $\langle \mathcal{N} \rangle$ of singular $\G$-modules is the smallest thick tensor ideal in $\Rep(\G)$ with the 2/3-property that contains all $\ell$-singular linkage classes.
\end{Lemma}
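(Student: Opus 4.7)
My plan is to prove both inclusions. To see that $\langle \mathcal{N} \rangle$ contains every $\ell$-singular linkage class, fix $\lambda \in \overline{C}_\mathrm{fund} \setminus C_\mathrm{fund}$ and $M \in \Rep_\lambda(\G)$. By Lemma \ref{lem:minimaltiltingcomplexlinkageclass}, every term of $C_\mathrm{min}(M)$ lies in $\Rep_\lambda(\G)$, so each such term decomposes as a direct sum of indecomposable tilting modules $T(\mu)$ with $\mu \in W_\mathrm{aff} \Cdot \lambda \cap X^+$. Since $\overline{C}_\mathrm{fund}$ is a fundamental domain for the dot action and $\lambda \notin C_\mathrm{fund}$, no such $\mu$ can lie in $C_\mathrm{fund}$, so every $T(\mu)$ appearing is negligible and $M \in \langle \mathcal{N} \rangle$. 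Together with the preceding recollection that $\langle \mathcal{N} \rangle$ is a thick tensor ideal with the 2/3-property, this settles one direction.

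For minimality, let $\mathcal{J}$ be any thick tensor ideal with the 2/3-property that contains every $\ell$-singular linkage class. By \cite[Lemma 2.4]{GruberTensorIdeals}, $\langle \mathcal{N} \rangle$ is the smallest thick tensor ideal with the 2/3-property containing $\mathcal{N}$, so it suffices to show that every indecomposable negligible tilting $T(\nu)$ belongs to $\mathcal{J}$. If $\nu$ is $\ell$-singular this is immediate from the hypothesis, so assume $\nu$ is $\ell$-regular; after absorbing the $\Omega$-part we may write $\nu = x \Cdot \lambda$ uniquely with $\lambda \in C_\mathrm{fund} \cap X$, $x \in W_\mathrm{aff}^+$ and $\ell(x) \geq 1$.

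Next, I would choose $s \in S$ as the last simple reflection in a reduced expression for $x$, so that $xs \in W_\mathrm{aff}^+$ and $\ell(xs) = \ell(x) - 1$, and pick $\mu \in \overline{C}_\mathrm{fund} \cap X$ in the relative interior of the wall of $C_\mathrm{fund}$ fixed by $s$, so that $\Stab_{W_\mathrm{aff}}(\mu) = \{e, s\}$; existence of such a $\mu$ is standard alcove geometry under the assumption $\ell \geq h$. The tilting module $T(xs \Cdot \mu) = T(x \Cdot \mu)$ lies in the singular linkage class $\Rep_\mu(\G) \subseteq \mathcal{J}$. Since the translation functor $T_\mu^\lambda$ is the composition of tensoring with a fixed tilting module and a block projection, both of which preserve the thick tensor ideal $\mathcal{J}$, we deduce $T_\mu^\lambda T(xs \Cdot \mu) \in \mathcal{J}$. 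The translation principle for tilting modules across a single wall then exhibits $T(\nu) = T(x \Cdot \lambda)$ as a direct summand of $T_\mu^\lambda T(xs \Cdot \mu)$, and closure of $\mathcal{J}$ under retracts yields $T(\nu) \in \mathcal{J}$.

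The main obstacle is exactly this last summand identification, since the translation formula for tilting modules across a single wall is not established in the excerpt. It can be verified either by combining the biadjunction identity $\Hom_\G(T(x \Cdot \lambda), T_\mu^\lambda T(xs \Cdot \mu)) \cong \End_\G(T(xs \Cdot \mu))$ with a Krull--Schmidt argument on the tilting decomposition of the right-hand term, or by adapting the translation short exact sequences in Section II.7 of \cite{Jantzen} from $\nabla$-modules to tilting modules; either route relies only on translation-functor identities already used implicitly in the proof of Proposition \ref{prop:minimalcomplexWeylmodule}.
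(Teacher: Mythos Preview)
Your proof is correct and follows essentially the same route as the paper's. The only difference is that what you flag as the ``main obstacle'' --- the identification of $T(x\Cdot\lambda)$ as a summand of $T_\mu^\lambda T(xs\Cdot\mu)$ --- is dispatched in the paper by a direct citation: Section~II.E.11 of \cite{Jantzen} gives the full isomorphism $T_\mu^\lambda T(x\Cdot\mu) \cong T(x\Cdot\lambda)$ (note $xs\Cdot\mu = x\Cdot\mu$), so no ad hoc biadjunction or short-exact-sequence argument is needed.
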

\begin{proof}
	Recall that a linkage class $\Rep_\mu(\G)$ is called $\ell$-singular if $\mu \in \overline{C}_\mathrm{fund} \setminus C_\mathrm{fund}$.
	For a $\G$-module $M$ in an $\ell$-singular linkage class $\Rep_\mu(\G)$, all terms of the minimal complex $C_\mathrm{min}(M)$ are negligible because they belong to $\Rep_\mu(\G)$ by Lemma \ref{lem:minimaltiltingcomplexlinkageclass}, so $M \in \langle \mathcal{N} \rangle$.
	
	Now let $\mathcal{I}$ be a thick tensor ideal with the 2/3-property that contains all $\ell$-singular linkage classes.
	In order to show that $\mathcal{I}$ contains $\langle \mathcal{N} \rangle$, it suffices to verify that $\mathcal{I}$ contains $\mathcal{N}$, since $\langle \mathcal{N} \rangle$ is the smallest thick tensor ideal with the 2/3-property in $\Rep(\G)$ that contains $\mathcal{N}$.
	All indecomposable tilting modules of $\ell$-singular highest weight belong to $\mathcal{I}$ by assumption, so now consider a negligible tilting module $T(x\Cdot \lambda)$ of $\ell$-regular highest weight, where $\lambda\in C_\mathrm{fund} \cap X$ and $x \in W_\mathrm{aff}^+$ with $x \neq e$.
	Let $s\in S$ be a simple reflection with
	$xs\in W_\mathrm{aff}^+$
	and $xs\Cdot\lambda<x\Cdot\lambda$.
	We can choose a weight $\mu \in \overline{C}_\mathrm{fund} \cap X$ with $\Stab_{W_\mathrm{aff}}(\mu) = \{ e , s \}$,
	and then by Section II.E.11 in \cite{Jantzen}, we have $T_\mu^\lambda T(x\Cdot\mu) \cong T(x\Cdot\lambda)$.
	As $T(x\Cdot\mu)$ belongs to $\mathcal{I}$ and as $T_\mu^\lambda T(x\Cdot\mu)$ is a direct summand of $T(x\Cdot\mu) \otimes T(\nu)$, for $\nu$ the unique dominant weight in the $W_\mathrm{fin}$-orbit of $\lambda-\mu$, we conclude that $T(x\Cdot\lambda)$ belongs to $\mathcal{I}$, as required.
\end{proof}

\begin{Lemma} \label{lem:regularquotientregularweight}
	For $\lambda\in X^+$, the following are equivalent:
	\begin{enumerate}
		\item $\Delta(\lambda)$ is regular;
		\item $L(\lambda)$ is regular;
		\item $\lambda$ is $\ell$-regular.
	\end{enumerate}
\end{Lemma}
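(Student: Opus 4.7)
The plan is to prove the equivalences by establishing $(3) \Rightarrow (1)$ and $(3) \Rightarrow (2)$ directly from the structural description of minimal tilting complexes in Propositions~\ref{prop:minimalcomplexWeylmodule} and~\ref{prop:minimalcomplexsimplemodule}, and to deduce the two converses simultaneously by contrapositive, using Lemma~\ref{lem:singularidealsingularlinkageclasses}.

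For the forward directions, I would start from $\lambda \in X^+$ being $\ell$-regular. Since $\rho$ is strongly dominant we have $(\lambda+\rho,\alpha^\vee) > 0$ for every $\alpha \in \Phi^+$, so the unique alcove containing $\lambda$ is dominant. Because $W_\mathrm{aff}$ acts simply transitively on alcoves and $C_\mathrm{fund} \cap X$ is a fundamental domain for the dot action of $W_\mathrm{aff}$ on the set of $\ell$-regular weights, there is a unique pair $(x,\lambda_0) \in W_\mathrm{aff}^+ \times (C_\mathrm{fund} \cap X)$ with $\lambda = x \Cdot \lambda_0$. Since $x \in W_\mathrm{aff}$ we have $\omega_x = e$, so Proposition~\ref{prop:minimalcomplexWeylmodule}(3) identifies the degree-$\ell(x)$ term of $C_\mathrm{min}(\Delta(\lambda))$ with $T(\lambda_0)$, and Proposition~\ref{prop:minimalcomplexsimplemodule}(4) identifies the degree-$\ell(x)$ term of $C_\mathrm{min}(L(\lambda))$ with $T(\lambda_0)$ as well. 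Since $\lambda_0 \in C_\mathrm{fund} \cap X$, the tilting module $T(\lambda_0)$ is non-negligible by definition, so both $\Delta(\lambda)$ and $L(\lambda)$ have at least one non-negligible term in their minimal tilting complex and are therefore regular.

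For the converses I would argue contrapositively: assume $\lambda$ is $\ell$-singular, so there is $\mu \in \overline{C}_\mathrm{fund} \setminus C_\mathrm{fund}$ with $\lambda \in W_\mathrm{aff} \Cdot \mu$. Then both $\Delta(\lambda)$ and $L(\lambda)$ lie in the $\ell$-singular linkage class $\Rep_\mu(\G)$, and Lemma~\ref{lem:singularidealsingularlinkageclasses} asserts $\Rep_\mu(\G) \subseteq \langle \mathcal{N} \rangle$, so both modules are singular. This simultaneously closes the implications $(1) \Rightarrow (3)$ and $(2) \Rightarrow (3)$.

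There is no genuine obstacle here; the one piece of bookkeeping that needs attention is ensuring that the representative $x$ is taken in $W_\mathrm{aff}^+$ rather than merely in $W_\mathrm{ext}^+$, so that $\omega_x = e$ and the top term of the minimal complex is literally $T(\lambda_0)$ (an indecomposable tilting module with highest weight in $C_\mathrm{fund} \cap X$, hence non-negligible) instead of the more general $T(\omega_x \Cdot \lambda_0)$ appearing in Propositions~\ref{prop:minimalcomplexWeylmodule} and~\ref{prop:minimalcomplexsimplemodule}.
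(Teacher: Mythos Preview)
Your proposal is correct and follows essentially the same route as the paper's own proof: use Propositions~\ref{prop:minimalcomplexWeylmodule} and~\ref{prop:minimalcomplexsimplemodule} to exhibit a non-negligible term $T(\lambda_0)$ in the minimal tilting complex when $\lambda$ is $\ell$-regular, and invoke Lemma~\ref{lem:singularidealsingularlinkageclasses} for the converse. Your additional care in justifying that the representative $x$ can be taken in $W_\mathrm{aff}^+$ (so that $\omega_x = e$) is a welcome clarification, though note that even without it the term $T(\omega_x \Cdot \lambda_0)$ would still be non-negligible since $\Omega$ preserves $C_\mathrm{fund}$.
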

\begin{proof}
	Suppose first that $\lambda$ is $\ell$-regular and write $\lambda= x \Cdot \lambda^\prime$ for some $x\in W_\mathrm{aff}^+$ and $\lambda^\prime \in C_\mathrm{fund} \cap X$.
	By Propositions \ref{prop:minimalcomplexWeylmodule} and \ref{prop:minimalcomplexsimplemodule}, the minimal tilting complexes of both $\Delta(\lambda)$ and $L(\lambda)$ have the non-negligible tilting module $T(\lambda^\prime)$ as their term in degree $\ell(x)$, and it follows that $\Delta(\lambda) \notin \langle \mathcal{N} \rangle$ and $L(\lambda) \notin \langle \mathcal{N} \rangle$.
	Conversely, if $\lambda$ is $\ell$-singular then the linkage class containing $\Delta(\lambda)$ and $L(\lambda)$ is contained in $\langle \mathcal{N} \rangle$ by Lemma \ref{lem:singularidealsingularlinkageclasses}, and it follows that $\Delta(\lambda) \in \langle \mathcal{N} \rangle$ and $L(\lambda)  \in \langle \mathcal{N} \rangle$.
\end{proof}

Our next goal is to prove two results that we consider as a `linkage principle' and a `translation principle' for tensor products.
(See Remark \ref{rem:translationtensorproduct} below for an explanation of this terminology.)
The first one (Corollary \ref{cor:principalblockregularquotient}) asserts that the principal block (and the extended principal block) are closed under tensor products in the regular quotient.
The second one (Theorem \ref{thm:translationtensorquotient}) shows that the Krull--Schmidt decomposition of any tensor product in $\regRep(\G)$ can be determined by looking at the Krull--Schmidt decomposition of (the projection to $\Rep_0(\G)$ of) a tensor product of $\G$-modules in $\Rep_0(\G)$ and that the multiplicities of indecomposable direct summands are governed by the Verlinde category.
Our main tool for proving these results will be the following lemma.

\begin{Lemma} \label{lem:linkageclassesregularquotient}
	Let $\lambda\in C_\mathrm{fund} \cap X$ and $\omega\in \Omega$. For $\G$-modules $M$ and $N$ in the linkage classes $\Rep_\lambda(\G)$ and $\Rep_{\omega\Cdot 0}(\G)$, respectively, the canonical embedding
	\[ \pr_{\omega\Cdot\lambda}\big( M \otimes N \big) \longrightarrow M \otimes N \]
	and the canonical projection
	\[ M \otimes N \longrightarrow \pr_{\omega\Cdot\lambda}\big( M \otimes N \big)\]
	descend to isomorphisms in $\regRep(\G)$.
\end{Lemma}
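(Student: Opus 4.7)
The plan is to show that $\pr_\mu(M\otimes N)$ is singular for every $\mu\in\overline{C}_\mathrm{fund}\cap X$ with $\mu\neq\omega\Cdot\lambda$. Once this is established, the block decomposition of $M\otimes N$ exhibits $\pr_{\omega\Cdot\lambda}(M\otimes N)$ as a direct summand whose complement belongs to $\langle\mathcal{N}\rangle$, so both the canonical embedding and the canonical projection become isomorphisms in $\regRep(\G)$.

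For $\mu\in\overline{C}_\mathrm{fund}\setminus C_\mathrm{fund}$, the linkage class $\Rep_\mu(\G)$ is entirely contained in $\langle\mathcal{N}\rangle$ by Lemma \ref{lem:singularidealsingularlinkageclasses}, so $\pr_\mu(M\otimes N)$ is automatically singular. The substantive case is $\mu\in C_\mathrm{fund}\cap X$ with $\mu\neq\omega\Cdot\lambda$. Here, since $\pr_\mu(M\otimes N)\in\Rep_\mu(\G)$, Lemma \ref{lem:minimaltiltingcomplexlinkageclass} ensures that the indecomposable tilting summands appearing in any term of $C_\mathrm{min}(\pr_\mu(M\otimes N))$ are of the form $T(\nu)$ with $\nu\in W_\mathrm{aff}\Cdot\mu\cap X^+$; among these, only $T(\mu)$ itself is non-negligible. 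Showing that $T(\mu)$ does not occur as a summand of any term of $C_\mathrm{min}(\pr_\mu(M\otimes N))$ will therefore force $\pr_\mu(M\otimes N)\in\langle\mathcal{N}\rangle$.

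Since $C_\mathrm{min}(\pr_\mu(M\otimes N))$ is a direct summand of $C_\mathrm{min}(M\otimes N)$ by part~(2) of Lemma \ref{lem:minimaltiltingcomplex}, and $C_\mathrm{min}(M\otimes N)$ in turn is a direct summand of $C_\mathrm{min}(M)\otimes C_\mathrm{min}(N)$ by part~(4) of the same lemma, it suffices to rule out $T(\mu)$ as a summand of any tensor product $T(\kappa_1)\otimes T(\kappa_2)$, where $T(\kappa_1)$ and $T(\kappa_2)$ are indecomposable summands of terms of $C_\mathrm{min}(M)$ and $C_\mathrm{min}(N)$ respectively. Lemma \ref{lem:minimaltiltingcomplexlinkageclass} places $\kappa_1\in W_\mathrm{aff}\Cdot\lambda$ and $\kappa_2\in W_\mathrm{aff}\omega\Cdot 0$. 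Since negligible tilting modules form a thick tensor ideal, a non-negligible summand $T(\mu)$ of $T(\kappa_1)\otimes T(\kappa_2)$ forces both factors to be non-negligible, and because $\overline{C}_\mathrm{fund}$ is a fundamental domain for the dot action of $W_\mathrm{aff}$ on $X_\R$, this pins down $\kappa_1=\lambda$ and $\kappa_2=\omega\Cdot 0$. But Lemma \ref{lem:VerlindeFundamentalGroup} gives $T(\lambda)\otimes T(\omega\Cdot 0)\cong T(\omega\Cdot\lambda)$ in $\Ver(\G)$, so the only non-negligible summand of this tensor product is $T(\omega\Cdot\lambda)$, contradicting $\mu\neq\omega\Cdot\lambda$.

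The main point to get right is the linkage-class bookkeeping through the tensor product: everything hinges on Lemma \ref{lem:minimaltiltingcomplexlinkageclass} forcing the constituents of $C_\mathrm{min}(M)$ and $C_\mathrm{min}(N)$ to remain inside the linkage classes of $M$ and $N$, which reduces the analysis to the single non-negligible cross-term $T(\lambda)\otimes T(\omega\Cdot 0)$, and on Lemma \ref{lem:VerlindeFundamentalGroup} identifying this cross-term as $T(\omega\Cdot\lambda)$ in the Verlinde category.
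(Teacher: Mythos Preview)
Your proof is correct and follows essentially the same approach as the paper: reduce to showing that $\pr_\nu(M\otimes N)$ is singular for $\nu\neq\omega\Cdot\lambda$, pass to minimal tilting complexes via Lemmas~\ref{lem:minimaltiltingcomplex} and~\ref{lem:minimaltiltingcomplexlinkageclass}, and then use the thick-ideal property of negligible tilting modules together with Lemma~\ref{lem:VerlindeFundamentalGroup} to handle the only non-negligible cross-term $T(\lambda)\otimes T(\omega\Cdot0)$. The only minor organizational differences are that you treat the $\ell$-singular weights $\mu$ separately via Lemma~\ref{lem:singularidealsingularlinkageclasses} (the paper handles all $\nu$ uniformly), and you route through $C_\mathrm{min}(M\otimes N)$ rather than applying $\pr_\nu$ directly to $C_\mathrm{min}(M)\otimes C_\mathrm{min}(N)$; neither difference is substantive.
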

\begin{proof}
	By the linkage principle, we have
	\[ M \otimes N \cong \bigoplus_{\nu\in \overline{C}_\mathrm{fund} \cap X} \pr_\nu\big(M \otimes N\big) , \]
	and the lemma is equivalent to the statement that $\pr_\nu\big(M \otimes N\big) \cong 0$ in the regular quotient $\regRep(\G)$, for all weights $\nu\in \overline{C}_\mathrm{fund} \cap X$ with $\nu \neq \omega\Cdot\lambda$.
	Observe that all terms of $C_\mathrm{min}(M)$ belong to $\Rep_\lambda(\G)$ and all terms of $C_\mathrm{min}(N)$ belong to $\Rep_{\omega\Cdot0}(\G)$ by Lemma \ref{lem:minimaltiltingcomplexlinkageclass}.
	As $C_\mathrm{min}\big( \pr_\nu(M \otimes N) \big)$ admits a split monomorphism into the complex $\pr_\nu\big( C_\mathrm{min}(M) \otimes C_\mathrm{min}(N) \big)$ by part (4) of Lemma \ref{lem:minimaltiltingcomplex}, it suffices to prove that $\pr_\nu\big( T(x\Cdot\lambda) \otimes T(y\omega\Cdot0) \big)$ is negligible for all $x,y\in W_\mathrm{aff}^+$ and $\nu \in \overline{C}_\mathrm{fund} \cap X$ with $\nu \neq \omega\Cdot\lambda$.
	If $x \neq e$ or $y \neq e$ then $T(x\Cdot\lambda) \otimes T(y\omega\Cdot0)$ is negligible, because the negligible tilting modules form a thick tensor ideal in $\Tilt(\G)$.
	For $x=y=e$, we have $T(\lambda) \otimes T(\omega\Cdot 0) \cong T(\omega\Cdot\lambda)$ in the Verlinde category by Lemma \ref{lem:VerlindeFundamentalGroup}, and it follows that $\pr_\nu\big( T(\lambda) \otimes T(\omega\Cdot0) \big)$ is negligible for all $\nu \in \overline{C}_\mathrm{fund} \cap X$ with $\nu \neq \omega\Cdot\lambda$, as required.
\end{proof}

For $\lambda \in C_\mathrm{fund}\cap X$, let us write $\regRep_\lambda(\G)$ for the essential image of the linkage class $\Rep_\lambda(\G)$ under the quotient functor $q \colon \Rep(\G) \to \regRep(\G)$, i.e.~the full subcategory of $\regRep(\G)$ whose objects are the $\G$-modules that are isomorphic to a $\G$-module in $\Rep_\lambda(\G)$, when considered as objects in $\regRep(\G)$.
We also write $\regRep_{\Omega\Cdot0}(\G)$ for the essential image of the extended principal block $\Rep_{\Omega\Cdot0}(\G)$ in $\regRep(\G)$.
As a consequence of Lemma \ref{lem:linkageclassesregularquotient}, we obtain our `linkage principle' for tensor products.

\begin{Corollary} \label{cor:principalblockregularquotient}
	The subcategories $\regRep_0(\G)$ and $\regRep_{\Omega\Cdot0}(\G)$ are closed under tensor products. 
\end{Corollary}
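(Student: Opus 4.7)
The corollary should fall out of Lemma \ref{lem:linkageclassesregularquotient} essentially for free, modulo one small observation: since $\Omega = \Stab_{W_\mathrm{ext}}(C_\mathrm{fund})$ fixes the fundamental alcove setwise and $0 \in C_\mathrm{fund}$, every element of the $\Omega$-orbit of $0$ lies again in $C_\mathrm{fund} \cap X$. This is what allows the weights $\omega \Cdot 0$ to appear in the ``$\lambda$-slot'' of that lemma.

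First I would handle $\regRep_0(\G)$. Given two objects $X, Y$ of $\regRep_0(\G)$, use the definition of the essential image to lift them to modules $M, N \in \Rep_0(\G)$ with $q(M) \cong X$ and $q(N) \cong Y$ in $\regRep(\G)$. Monoidality of $q$ gives $X \otimes Y \cong q(M \otimes N)$, and applying Lemma \ref{lem:linkageclassesregularquotient} with $\lambda = 0$ and $\omega = e$ yields $M \otimes N \cong \pr_0(M \otimes N)$ in $\regRep(\G)$. Since $\pr_0(M \otimes N) \in \Rep_0(\G)$, we conclude that $X \otimes Y \in \regRep_0(\G)$.

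For $\regRep_{\Omega\Cdot0}(\G)$ I would proceed in the same way. By bilinearity of the tensor product and the direct sum decomposition $\Rep_{\Omega\Cdot0}(\G) = \bigoplus_{\omega \in \Omega} \Rep_{\omega\Cdot0}(\G)$, it suffices to consider representatives $M \in \Rep_{\omega_1\Cdot0}(\G)$ and $N \in \Rep_{\omega_2\Cdot0}(\G)$ with $\omega_1,\omega_2 \in \Omega$. By the observation above, $\omega_1 \Cdot 0$ lies in $C_\mathrm{fund} \cap X$, so Lemma \ref{lem:linkageclassesregularquotient} applies with $\lambda = \omega_1 \Cdot 0$ and $\omega = \omega_2$; since the dot action is a group action, this gives $M \otimes N \cong \pr_{\omega_2\omega_1\Cdot0}(M \otimes N)$ in $\regRep(\G)$, and the right-hand side visibly belongs to $\Rep_{\omega_2\omega_1\Cdot0}(\G) \subseteq \Rep_{\Omega\Cdot0}(\G)$.

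I do not anticipate any genuine obstacle here: the heavy lifting has already been performed in Lemma \ref{lem:linkageclassesregularquotient}, where the problem is reduced, via the split monomorphism of part (4) of Lemma \ref{lem:minimaltiltingcomplex}, to the multiplication rule $T(\lambda) \otimes T(\omega \Cdot 0) \cong T(\omega \Cdot \lambda)$ in the Verlinde category supplied by Lemma \ref{lem:VerlindeFundamentalGroup}. The only point in the corollary itself that requires even a moment's thought is the verification that $\Omega \Cdot 0 \subseteq C_\mathrm{fund} \cap X$, which is immediate from the definition of $\Omega$.
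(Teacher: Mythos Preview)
Your proposal is correct and follows essentially the same route as the paper: both arguments reduce immediately to Lemma~\ref{lem:linkageclassesregularquotient} applied with $\lambda$ an element of $\Omega\Cdot 0 \subseteq C_\mathrm{fund} \cap X$, the paper treating the extended case first and then specializing to $\omega=\omega'=e$, whereas you do it in the opposite order. The only cosmetic difference is that you make the lifting along $q$ and the inclusion $\Omega\Cdot 0 \subseteq C_\mathrm{fund} \cap X$ explicit, which the paper leaves implicit.
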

\begin{proof}
	For $\omega,\omega^\prime \in \Omega$ and $\G$-modules $M$ and $N$ in the linkage classes of $\omega\Cdot0$ and $\omega^\prime\Cdot0$, respectively, we have $M \otimes N \cong \pr_{\omega\omega^\prime\Cdot0}( M \otimes N )$ in $\regRep(\G)$ by Lemma \ref{lem:linkageclassesregularquotient}, so $M \otimes N$ belongs to $\regRep_{\omega\omega^\prime\Cdot0}(\G)$. The claim about $\regRep_0(\G)$ follows by setting $\omega=\omega^\prime=e$.
\end{proof}

As a further consequence of Lemma \ref{lem:linkageclassesregularquotient}, we prove that a translation functor with source in the extended principal block descends in the regular quotient to tensoring with a tilting module.

\begin{Corollary} \label{cor:translationquotient}
	Let $\lambda\in C_\mathrm{fund}\cap X$ and $\omega\in \Omega$. Then $\lambda$ is the unique dominant weight in the $W_\mathrm{fin}$-orbit of $\omega\Cdot\lambda - \omega\Cdot0$, and the canonical natural transformations
	\[ T_{\omega\Cdot0}^{\omega\Cdot\lambda} = \pr_{\omega\Cdot\lambda}\big( T(\lambda) \otimes - \big) \hspace{.2cm} \Longrightarrow \hspace{.2cm} \big( T(\lambda) \otimes - \big) \hspace{.2cm} \Longrightarrow \hspace{.2cm} \pr_{\omega\Cdot\lambda}\big( T(\lambda) \otimes - \big) = T_{\omega\Cdot0}^{\omega\Cdot\lambda} \]
	of functors from $\Rep_{\omega\Cdot0}(\G)$ to $\Rep(\G)$ give rise to an isomorphism of functors
	\[ q \circ T_{\omega\Cdot0}^{\omega\Cdot\lambda} \cong q \circ \big( T(\lambda) \otimes - \big) . \]
\end{Corollary}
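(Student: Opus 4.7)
The plan is to split the proof into two independent parts matching the two assertions. For the first claim, I would write $\omega = t_\gamma w$ with $\gamma \in X$ and $w \in W_\mathrm{fin}$ and compute directly from the $\ell$-dilated dot action:
\[ \omega \Cdot \lambda - \omega \Cdot 0 = \bigl(\ell\gamma + w(\lambda+\rho) - \rho\bigr) - \bigl(\ell\gamma + w(\rho) - \rho\bigr) = w(\lambda). \]
Since $\lambda \in C_\mathrm{fund} \cap X \subseteq X^+$ and each $W_\mathrm{fin}$-orbit on $X$ contains a unique dominant weight, this weight is $\lambda$.

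For the second claim, I would first invoke the discussion in Subsection \ref{subsec:linkagetranslation} (following Sections II.7.6--7 in \cite{Jantzen}): the simple module $L(\lambda)$ appearing in the definition of $T_{\omega \Cdot 0}^{\omega \Cdot \lambda}$ can be replaced, up to natural isomorphism, by any $\G$-module of highest weight $\lambda$, in particular by $T(\lambda)$. Therefore, on $\Rep_{\omega \Cdot 0}(\G)$ one has the natural identification $T_{\omega \Cdot 0}^{\omega \Cdot \lambda} = \pr_{\omega \Cdot \lambda}\bigl( T(\lambda) \otimes - \bigr)$, and the two natural transformations in the statement are precisely the canonical inclusion of and projection onto the summand $\pr_{\omega \Cdot \lambda}(T(\lambda) \otimes M)$ of $T(\lambda) \otimes M$ coming from the block decomposition.

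Now I would apply Lemma \ref{lem:linkageclassesregularquotient} with the roles of ``$M$'' and ``$N$'' played by $T(\lambda)$ and an arbitrary object $M \in \Rep_{\omega \Cdot 0}(\G)$; this is legitimate because $\lambda \in C_\mathrm{fund} \cap X$ forces $T(\lambda) \cong L(\lambda) \cong \Delta(\lambda)$, which lies in $\Rep_\lambda(\G)$. The lemma then gives, for each such $M$, that both the canonical embedding $\pr_{\omega \Cdot \lambda}(T(\lambda) \otimes M) \to T(\lambda) \otimes M$ and the canonical projection $T(\lambda) \otimes M \to \pr_{\omega \Cdot \lambda}(T(\lambda) \otimes M)$ descend to isomorphisms in $\regRep(\G)$; these are mutually inverse because their composition in $\Rep(\G)$ is already the identity on $\pr_{\omega \Cdot \lambda}(T(\lambda) \otimes M)$.

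The only remaining point is to upgrade this pointwise assertion to an isomorphism of functors. This is automatic: both $T(\lambda) \otimes -$ and $\pr_{\omega \Cdot \lambda}$ are functors, and the embedding and projection form natural transformations in $M$ by functoriality of the block decomposition of $\Rep(\G)$. Passing these natural transformations through the quotient $q$ yields the required isomorphism of functors. I do not foresee a serious obstacle here, since Lemma \ref{lem:linkageclassesregularquotient} does the heavy lifting; the main item to keep in mind is simply to use $T(\lambda)$ (rather than $L(\lambda)$) in the definition of the translation functor so that the hypotheses of that lemma apply.
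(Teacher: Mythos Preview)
Your proposal is correct and follows essentially the same approach as the paper: compute $\omega\Cdot\lambda-\omega\Cdot0=w(\lambda)$ via $\omega=t_\gamma w$, and then apply Lemma~\ref{lem:linkageclassesregularquotient} with $T(\lambda)\in\Rep_\lambda(\G)$ in the first slot and an arbitrary object of $\Rep_{\omega\Cdot0}(\G)$ in the second. The only minor remark is that your final caveat about preferring $T(\lambda)$ over $L(\lambda)$ is unnecessary here, since $\lambda\in C_\mathrm{fund}$ already forces $T(\lambda)\cong L(\lambda)$ and both lie in $\Rep_\lambda(\G)$; the lemma applies equally well either way.
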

\begin{proof}
	Writing $\omega=t_\gamma w$ with $\gamma\in X$ and $w \in W_\mathrm{fin}$, it is straightforward to see that $\omega\Cdot\lambda - \omega\Cdot0 = w(\lambda)$, so $\lambda$ is indeed the unique dominant weight in the $W_\mathrm{fin}$-orbit of $\omega\Cdot\lambda - \omega\Cdot0$.
	By Lemma \ref{lem:linkageclassesregularquotient}, the component at a $\G$-module $N$ in $\Rep_{\omega\Cdot0}(\G)$ of either of the two natural transformations descends to an isomorphism in $\regRep(\G)$, and the claim follows.
\end{proof}

We are now ready to establish our `translation principle' for tensor products.

\begin{Theorem} \label{thm:translationtensorquotient}
	For $\lambda,\mu\in C_\mathrm{fund} \cap X$ and $\omega,\omega^\prime \in \Omega$, there is a natural transformation of bifunctors
	\[ \Psi \colon \big( T_{\omega\Cdot0}^{\omega\Cdot\lambda} - \big) \otimes \big( T_{\omega^\prime\Cdot0}^{\omega^\prime\Cdot\mu} - \big) \Longrightarrow \bigoplus_{\nu \in C_\mathrm{fund} \cap X} \big( T_{\omega\omega^\prime\Cdot0}^{\omega\omega^\prime\Cdot\nu} \circ \pr_{\omega\omega^\prime\Cdot0}( - \otimes - ) \big)^{\oplus c_{\lambda,\mu}^\nu} \]
	from $\Rep_{\omega\Cdot0}(\G) \times \Rep_{\omega^\prime\Cdot0}(\G)$ to $\Rep(\G)$, where $c_{\lambda,\mu}^\nu = [ T(\lambda) \otimes T(\mu) : T(\nu) ]_\oplus$,
	such that $q \Psi$ is an isomorphism of bifunctors.
\end{Theorem}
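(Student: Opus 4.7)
The plan is to construct $\Psi$ explicitly as a composition of canonical morphisms and then to verify, step by step, that each map becomes an isomorphism in $\regRep(\G)$ by directly invoking results already established in this section.

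First, I would fix once and for all a Krull--Schmidt decomposition
\[ T(\lambda) \otimes T(\mu) \cong \bigoplus_{\nu \in C_\mathrm{fund} \cap X} T(\nu)^{\oplus c_{\lambda,\mu}^\nu} \oplus T', \]
where $T'$ is a sum of indecomposable tilting modules $T(\xi)$ with $\xi \in X^+ \setminus C_\mathrm{fund}$, hence negligible. For $M \in \Rep_{\omega\Cdot 0}(\G)$ and $N \in \Rep_{\omega'\Cdot 0}(\G)$, I would define $\Psi_{M,N}$ as the composition of the following five natural maps: (i) the canonical direct-summand inclusion $\pr_{\omega\Cdot\lambda}(T(\lambda) \otimes M) \otimes \pr_{\omega'\Cdot\mu}(T(\mu) \otimes N) \hookrightarrow (T(\lambda) \otimes M) \otimes (T(\mu) \otimes N)$; (ii) the canonical isomorphism $(T(\lambda) \otimes M) \otimes (T(\mu) \otimes N) \cong (T(\lambda) \otimes T(\mu)) \otimes (M \otimes N)$ obtained from associativity and the braiding of $\Rep(\G)$ (swapping $M$ past $T(\mu)$); (iii) the projection $(T(\lambda) \otimes T(\mu)) \otimes (M \otimes N) \twoheadrightarrow \bigl(\bigoplus_\nu T(\nu)^{\oplus c_{\lambda,\mu}^\nu}\bigr) \otimes (M \otimes N)$ induced by the chosen decomposition; (iv) the projection onto $\bigoplus_\nu T(\nu)^{\oplus c_{\lambda,\mu}^\nu} \otimes \pr_{\omega\omega'\Cdot 0}(M \otimes N)$; and (v) the canonical projection onto the $\omega\omega'\Cdot\nu$-block on each $T(\nu)$-summand, which by definition equals $\bigoplus_\nu \bigl(T_{\omega\omega'\Cdot 0}^{\omega\omega'\Cdot\nu} \pr_{\omega\omega'\Cdot 0}(M \otimes N)\bigr)^{\oplus c_{\lambda,\mu}^\nu}$. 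Since the decomposition in (iii) is chosen independently of $M$ and $N$, every step is natural in both arguments, and $\Psi$ is a well-defined natural transformation of bifunctors valued in $\Rep(\G)$.

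Next, I would verify that $q\Psi_{M,N}$ is an isomorphism by checking each step in turn. Step (i) becomes an iso in $\regRep(\G)$ by the second half of Corollary \ref{cor:translationquotient} applied to each tensor factor, combined with the fact that $q$ is monoidal. Step (ii) is already an honest isomorphism. Step (iii) becomes an iso because the complementary summand $T' \otimes (M \otimes N)$ lies in $\langle \mathcal{N} \rangle$, since $T' \in \mathcal{N} \subseteq \langle \mathcal{N} \rangle$ and $\langle \mathcal{N} \rangle$ is a tensor ideal in $\Rep(\G)$ by \cite{GruberTensorIdeals}. Step (iv) becomes an iso by Corollary \ref{cor:principalblockregularquotient} (or equivalently Lemma \ref{lem:linkageclassesregularquotient} applied with $\lambda = \omega \Cdot 0 \in C_\mathrm{fund} \cap X$, using $\ell \geq h$), together once more with the tensor-ideal property of $\langle \mathcal{N} \rangle$. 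Finally, step (v) becomes an iso by the first half of Corollary \ref{cor:translationquotient}, applied with $\omega$ replaced by $\omega\omega'$ and $\lambda$ replaced by $\nu$, to the $\G$-module $\pr_{\omega\omega'\Cdot 0}(M \otimes N) \in \Rep_{\omega\omega'\Cdot 0}(\G)$. A composition of isomorphisms in $\regRep(\G)$ is itself an iso, and pointwise isomorphism of natural transformations is the same as isomorphism of bifunctors.

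The main obstacle I anticipate is purely organizational: one must carefully track the multiplicity labels introduced in step (iii) when $c_{\lambda,\mu}^\nu > 1$, and ensure that the associativity and braiding rearrangement in step (ii) is applied coherently so that the full composition remains bifunctorial. Once these bookkeeping choices are fixed, each of the five identifications is an immediate appeal to the results already in place.
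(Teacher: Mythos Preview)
Your proposal is correct and follows essentially the same five-step construction as the paper: inclusion via Corollary~\ref{cor:translationquotient}, braiding rearrangement, splitting off the negligible part of $T(\lambda)\otimes T(\mu)$, projection to the $\omega\omega'\Cdot 0$ linkage class via Lemma~\ref{lem:linkageclassesregularquotient}, and the final block projection via Corollary~\ref{cor:translationquotient}. The only (immaterial) difference is that the paper performs your steps (iii) and (iv) in the opposite order.
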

\begin{proof}
	We construct the natural transformation in several steps.
	\begin{enumerate}
		\item By Corollary \ref{cor:translationquotient}, the natural embedding
		\[ \big( T_{\omega\Cdot0}^{\omega\Cdot\lambda} - \big) \otimes \big( T_{\omega^\prime\Cdot0}^{\omega^\prime\Cdot\mu} - \big) = \pr_{\omega\Cdot\lambda}\big( T(\lambda) \otimes - \big) \otimes \pr_{\omega^\prime\Cdot\mu}\big( T(\mu) \otimes - \big) \Longrightarrow \big( T(\lambda) \otimes - \big) \otimes \big( T(\mu) \otimes - \big) \]
		induces an isomorphism of functors upon passage to the regular quotient $\regRep(\G)$.
		\item The braiding on $\Rep(\G)$ gives rise to a natural isomorphism
		\[ \big( T(\lambda) \otimes - \big) \otimes \big( T(\mu) \otimes - \big) \cong \big( T(\lambda) \otimes T(\mu) \big) \otimes ( - \otimes - ) . \]
		\item The canonical projection to the linkage class of $\omega\omega^\prime\Cdot0$ gives rise to a natural transformation
		\[ ( - \otimes -) \Longrightarrow \pr_{\omega\omega^\prime\Cdot0}(-\otimes-) \]
		of bifunctors from $\Rep_{\omega\Cdot0}(\G) \times \Rep_{\omega^\prime\Cdot0}(\G)$ to $\Rep(\G)$, which descends to a natural isomorphism in $\regRep(\G)$ by Lemma \ref{lem:linkageclassesregularquotient}. Tensoring with $T(\lambda) \otimes T(\mu)$ yields a natural transformation
		\[ \big( T(\lambda) \otimes T(\mu) \big) \otimes ( - \otimes -) \Longrightarrow \big( T(\lambda) \otimes T(\mu) \big) \otimes \pr_{\omega\omega^\prime\Cdot0}(-\otimes-) , \]
		which again descends to a natural isomorphism in $\regRep(\G)$.
		\item The tensor product $T(\lambda) \otimes T(\mu)$ can be decomposed as a direct sum
		\[ T(\lambda) \otimes T(\mu) \cong N \oplus \bigoplus_{\nu \in C_\mathrm{fund} \cap X} T(\nu)^{\oplus c_{\lambda,\mu}^\nu} , \]
		where $N$ is a negligible tilting module.
		This decomposition gives rise to a natural isomorphism
		\[ \big( T(\lambda) \otimes T(\mu) \big) \otimes \pr_{\omega\omega^\prime\Cdot0}( - \otimes - ) \cong \big( N \otimes \pr_{\omega\omega^\prime\Cdot0}( - \otimes - ) \big) \oplus \bigoplus_{ \nu \in C_\mathrm{fund} \cap X } \big( T(\nu) \otimes \pr_{\omega\omega^\prime\Cdot0}( - \otimes - ) \big)^{\oplus c_{\lambda,\mu}^\nu} . \]
		As $N$ is negligible, the essential image of the bifunctor $N \otimes \pr_{\omega\omega^\prime\Cdot0}( - \otimes - )$ is contained in $\langle \mathcal{N} \rangle$, and it follows that $q \circ \big( N \otimes \pr_{\omega\omega^\prime\Cdot0}( - \otimes - ) \big) = 0$.
		Therefore, the projection onto the non-negligible part gives rise to a natural transformation
		\[ \big( T(\lambda) \otimes T(\mu) \big) \otimes \pr_{\omega\omega^\prime\Cdot0}( - \otimes - ) \Longrightarrow \bigoplus_{ \nu \in C_\mathrm{fund} \cap X } \big( T(\nu) \otimes \pr_{\omega\omega^\prime\Cdot0}( - \otimes - ) \big)^{\oplus c_{\lambda,\mu}^\nu} , \]
		which descends to an isomorphism of functors in $\regRep(\G)$.
		\item Again by Corollary \ref{cor:translationquotient}, the canonical natural transformation
		\[ \bigoplus_{ \nu \in C_\mathrm{fund} \cap X } \big( T(\nu) \otimes \pr_{\omega\omega^\prime\Cdot0}( - \otimes - ) \big)^{\oplus c_{\lambda,\mu}^\nu} \Longrightarrow \bigoplus_{ \nu \in C_\mathrm{fund} \cap X } \big( T_{\omega\omega^\prime\Cdot0}^{\omega\omega^\prime\Cdot\nu} \circ \pr_{\omega\omega^\prime\Cdot0}( - \otimes - ) \big)^{\oplus c_{\lambda,\mu}^\nu} \]
		induces an isomorphism of functors upon passage to the regular quotient.
	\end{enumerate}
	All of the natural transformations in (1)--(5) give rise to natural isomorphisms upon passage to the regular quotient $\regRep(\G)$.
	Therefore, their composition is a natural transformation
	\[ \Psi \colon \big( T_{\omega\Cdot0}^{\omega\Cdot\lambda} - \big) \otimes \big( T_{\omega^\prime\Cdot0}^{\omega^\prime\Cdot\mu} - \big) \Longrightarrow \bigoplus_{\nu \in C_\mathrm{fund} \cap X} \big( T_{\omega\omega^\prime\Cdot0}^{\omega\omega^\prime\Cdot\nu} \circ \pr_{\omega\omega^\prime\Cdot0}( - \otimes - ) \big)^{\oplus c_{\lambda,\mu}^\nu} \]
	such that $q \Psi$ is a natural isomorphism.
\end{proof}

\begin{Remark}
	The statement of Theorem \ref{thm:translationtensorquotient} becomes more readable (but also slightly less general) if we set $\omega = \omega^\prime = e$:
	For $\lambda,\mu\in C_\mathrm{fund} \cap X$, there is a natural transformation of bifunctors
	\[ \Psi \colon \big( T_0^\lambda - \big) \otimes \big( T_0^\mu - \big) \Longrightarrow \bigoplus_{\nu \in C_\mathrm{fund} \cap X} \big( T_0^\nu \circ \pr_0( - \otimes - ) \big)^{\oplus c_{\lambda,\mu}^\nu} \]
	from $\Rep_0(\G) \times \Rep_0(\G)$ to $\Rep(\G)$, such that $q \Psi$ is an isomorphism of bifunctors.
	Taking the action of $\Omega$ into account complicates our notation here, but it will be useful in applications.
\end{Remark}

\begin{Remark} \label{rem:translationtensorproduct}
	Let us briefly explain why we think of Corollary \ref{cor:principalblockregularquotient} and Theorem \ref{thm:translationtensorquotient} as a `linkage principle' and a `translation principle' for tensor products.
	The usual linkage principle asserts that the category $\Rep(\G)$ decomposes into linkage classes, and the usual translation principle establishes equivalences between the different $\ell$-regular linkage classes.
	Thus, many questions about the structure of the category $\Rep(\G)$ can be reduced to questions about the principal block $\Rep_0(\G)$.
	However, this strategy fails for two reasons when one tries to take the monoidal structure of $\Rep(\G)$ into account. Firstly, the principal block is not closed under tensor products. In fact, the tensor product of two $\G$-modules in $\Rep_0(\G)$ can have non-zero indecomposable direct summands in many different linkage classes, including $\ell$-singular ones. Secondly, it is a priori not clear how structural information about tensor products of $\G$-modules in the principal block can be used to deduce (precise) structural information about tensor products of $\G$-modules in arbitrary $\ell$-regular linkage classes.
	
	The preceding results show that both of these obstacles can be partially resolved by passing to the regular quotient. Indeed, Corollary \ref{cor:principalblockregularquotient} tells us that the essential image $\regRep_0(\G)$ of the principal block in the regular quotient is closed under tensor products; hence, the decomposition of $\Rep(\G)$ into linkage classes is, to some extent, compatible with the monoidal strucure of $\Rep(\G)$.
	Furthermore, Theorem \ref{thm:translationtensorquotient} enables us to describe (the regular parts of) tensor products of $\G$-modules in arbitrary $\ell$-regular linkage classes, once we know the structure of (the components in $\Rep_0(\G)$ of) tensor products of $\G$-modules in $\Rep_0(\G)$.
	The reader should note, however, that all information about singular direct summands is lost in the process.
\end{Remark}

In the following, we present a second approach to the `linkage principle' and the `translation principle' for tensor products, which largely bypasses the quotient category $\regRep(\G)$, but also loses the functoriality of Theorem \ref{thm:translationtensorquotient}.
When studying tensor product of specific $\G$-modules, rather than categorical properties of $\Rep(\G)$, this second approach will turn out to be more convenient.

\begin{Definition}
	For a $\G$-module $M$, we write $M \cong M_\mathrm{sing} \oplus M_\mathrm{reg}$, where for a fixed Krull--Schmidt decomposition of $M$, we define $M_\mathrm{sing}$
	to be the direct sum of the singular indecomposable direct summands of $M$ and $M_\mathrm{reg}$
	to be the direct sum of the regular indecomposable direct summands of $M$.
	We call $M_\mathrm{sing}$ the \emph{singular part}
	of $M$ and $M_\mathrm{reg}$ the \emph{regular part}
	of~$M$.
\end{Definition}

Note that the decomposition $M \cong M_\mathrm{sing} \oplus M_\mathrm{reg}$ in the previous definition is neither canonical nor functorial.
Nevertheless, the singular part and the regular part are uniquely determined up to isomorphism by the Krull--Schmidt decomposition of $M$.

\begin{Lemma} \label{lem:regularpartdirectsumtensorproduct}
	For $\G$-modules $M$ and $N$, we have
	\[ M_\mathrm{reg} \oplus N_\mathrm{reg} \cong (M \oplus N)_\mathrm{reg} \qquad \text{and} \qquad (M \otimes N)_\mathrm{reg} \cong \big( M_\mathrm{reg} \otimes N_\mathrm{reg} \big)_\mathrm{reg} . \]
\end{Lemma}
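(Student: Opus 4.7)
The proof should be a routine consequence of the Krull--Schmidt property of $\Rep(\G)$ together with the fact that $\langle \mathcal{N} \rangle$ is a thick tensor ideal (so in particular closed under tensor products with arbitrary $\G$-modules and under retracts).

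For the first isomorphism, fix Krull--Schmidt decompositions $M \cong \bigoplus_i M_i$ and $N \cong \bigoplus_j N_j$ into indecomposable $\G$-modules. Then $M \oplus N \cong (\bigoplus_i M_i) \oplus (\bigoplus_j N_j)$ is a Krull--Schmidt decomposition of $M \oplus N$, and by definition $(M \oplus N)_\mathrm{reg}$ is the direct sum of those indecomposable summands $M_i$ and $N_j$ that are regular. This is precisely $M_\mathrm{reg} \oplus N_\mathrm{reg}$.

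For the second isomorphism, the plan is to decompose
\[ M \otimes N \cong (M_\mathrm{sing} \otimes N_\mathrm{sing}) \oplus (M_\mathrm{sing} \otimes N_\mathrm{reg}) \oplus (M_\mathrm{reg} \otimes N_\mathrm{sing}) \oplus (M_\mathrm{reg} \otimes N_\mathrm{reg}) \]
and then apply the first isomorphism to conclude that $(M \otimes N)_\mathrm{reg}$ is the direct sum of the regular parts of the four summands. Since $\langle \mathcal{N} \rangle$ is a tensor ideal, each of the first three summands belongs to $\langle \mathcal{N} \rangle$; as $\langle \mathcal{N} \rangle$ is closed under retracts, every indecomposable direct summand of these three $\G$-modules is singular, so their regular parts are zero. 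Only the fourth summand contributes, giving $(M \otimes N)_\mathrm{reg} \cong (M_\mathrm{reg} \otimes N_\mathrm{reg})_\mathrm{reg}$.

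There is no real obstacle here; the only subtlety worth spelling out is why $(X)_\mathrm{reg} = 0$ whenever $X \in \langle \mathcal{N} \rangle$, which is exactly the closure of $\langle \mathcal{N} \rangle$ under retracts recalled at the start of Section \ref{sec:singularmodules}.
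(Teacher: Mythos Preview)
Your argument is correct and matches the paper's own proof: the first isomorphism is immediate from the definition via a Krull--Schmidt decomposition, and the second follows from expanding $M \otimes N$ into four summands and using that singular $\G$-modules form a thick tensor ideal. The paper states this more tersely, but the content is identical.
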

\begin{proof}
	The first isomorphism is straightforward to see from the definition.
	The second one follows from the direct sum decomposition
	\begin{multline*}
	M \otimes N \cong ( M_\mathrm{reg} \oplus M_\mathrm{sing} ) \otimes ( N_\mathrm{reg} \oplus N_\mathrm{sing} )
	\\ \cong ( M_\mathrm{reg} \otimes N_\mathrm{reg} ) \oplus ( M_\mathrm{reg} \otimes N_\mathrm{sing} ) \oplus ( M_\mathrm{sing} \otimes N_\mathrm{reg} ) \oplus ( M_\mathrm{sing} \otimes N_\mathrm{sing} )
	\end{multline*}
	and the fact that singular $\G$-modules form a thick tensor ideal.
\end{proof}

The following lemma can be seen as another version of the `linkage principle' for tensor products.

\begin{Lemma} \label{lem:regularpartlinkageclass}
	Let $\lambda\in C_\mathrm{fund}\cap X$ and $\omega\in \Omega$, and let $M$ and $N$ be $\G$-modules such that $M_\mathrm{reg}$ belongs to $\Rep_\lambda(\G)$ and $N_\mathrm{reg}$ belongs to $\Rep_{\omega\Cdot0}(\G)$.
	Then $(M \otimes N)_\mathrm{reg}$ belongs to $\Rep_{\omega\Cdot\lambda}(\G)$.
\end{Lemma}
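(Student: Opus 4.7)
The plan is to reduce to the case treated by Lemma \ref{lem:linkageclassesregularquotient} and then translate the statement about isomorphisms in $\regRep(\G)$ into a statement about Krull--Schmidt decompositions in $\Rep(\G)$.

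First, by Lemma \ref{lem:regularpartdirectsumtensorproduct} we have $(M \otimes N)_\mathrm{reg} \cong ( M_\mathrm{reg} \otimes N_\mathrm{reg} )_\mathrm{reg}$, so after replacing $M$ and $N$ by $M_\mathrm{reg}$ and $N_\mathrm{reg}$, we may assume that $M \in \Rep_\lambda(\G)$ and $N \in \Rep_{\omega\Cdot 0}(\G)$. Now apply the block decomposition from the usual linkage principle,
\[ M \otimes N \cong \bigoplus_{\nu\in\overline{C}_\mathrm{fund}\cap X} \pr_\nu(M \otimes N) . \]

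Next, I would invoke Lemma \ref{lem:linkageclassesregularquotient}: the canonical embedding $\pr_{\omega\Cdot\lambda}(M \otimes N) \hookrightarrow M \otimes N$ is an isomorphism in $\regRep(\G)$. Combined with the direct sum decomposition above, this forces $\pr_\nu(M \otimes N) \cong 0$ in $\regRep(\G)$ for every $\nu \in \overline{C}_\mathrm{fund}\cap X$ with $\nu \neq \omega\Cdot\lambda$; equivalently, each such $\pr_\nu(M \otimes N)$ is a singular $\G$-module.

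Finally, I translate this back into a statement about the regular part. Since the indecomposable summands of $\pr_\nu(M \otimes N)$ for $\nu \neq \omega\Cdot\lambda$ are all singular (a direct summand of a singular module is singular, as $\langle \mathcal{N}\rangle$ is closed under retracts), any regular indecomposable direct summand of $M \otimes N$ must occur as a direct summand of $\pr_{\omega\Cdot\lambda}(M \otimes N)$. By the Krull--Schmidt property, this yields
\[ (M \otimes N)_\mathrm{reg} \cong \bigl( \pr_{\omega\Cdot\lambda}(M \otimes N) \bigr)_\mathrm{reg} , \]
and the right-hand side is a direct summand of $\pr_{\omega\Cdot\lambda}(M \otimes N) \in \Rep_{\omega\Cdot\lambda}(\G)$, hence belongs to $\Rep_{\omega\Cdot\lambda}(\G)$, as desired. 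There is no real obstacle here; the only point that requires care is the bookkeeping passage from ``$\cong 0$ in $\regRep(\G)$'' to ``singular in $\Rep(\G)$,'' which is immediate once one recalls that a $\G$-module is regular iff its image under $q$ is non-zero.
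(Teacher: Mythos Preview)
Your proof is correct and follows essentially the same route as the paper: reduce to $M\in\Rep_\lambda(\G)$ and $N\in\Rep_{\omega\Cdot0}(\G)$ via Lemma~\ref{lem:regularpartdirectsumtensorproduct}, use the linkage decomposition, and conclude that $\pr_\nu(M\otimes N)$ is singular for $\nu\neq\omega\Cdot\lambda$. The only cosmetic difference is that you invoke the \emph{statement} of Lemma~\ref{lem:linkageclassesregularquotient} and then argue that the complementary summands vanish in $\regRep(\G)$, whereas the paper simply cites the \emph{proof} of that lemma, where the singularity of each $\pr_\nu(M\otimes N)$ for $\nu\neq\omega\Cdot\lambda$ was established directly.
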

\begin{proof}
	By Lemma \ref{lem:regularpartdirectsumtensorproduct} and the linkage principle, we have
	\[ (M \otimes N)_\mathrm{reg} = \big( M_\mathrm{reg} \otimes N_\mathrm{reg} \big)_\mathrm{reg} \cong \bigoplus_{\nu \in \overline{C}_\mathrm{fund} \cap X} \big( \pr_\nu( M_\mathrm{reg} \otimes N_\mathrm{reg} ) \big)_\mathrm{reg} , \]
	and it suffices to show that $\pr_\nu( M_\mathrm{reg} \otimes N_\mathrm{reg} )$ is singular for all $\nu\in \overline{C}_\mathrm{fund} \cap X$ with $\nu \neq \omega\Cdot\lambda$.
	This was already observed in the proof of Lemma \ref{lem:linkageclassesregularquotient}, for arbitrary $\G$-modules in the linkage classes $\Rep_\lambda(\G)$ and $\Rep_{\omega\Cdot0}(\G)$.
\end{proof}

Next we give a reformulation of Corollary \ref{cor:translationquotient} in terms of regular parts of $\G$-modules.

\begin{Corollary} \label{cor:regularparttranslation}
	Let $\lambda\in C_\mathrm{fund} \cap X$ and $\omega\in\Omega$, and let $M$ be a $\G$-module in $\Rep_{\omega\Cdot0}(\G)$.
	Then
	\[ \big( T(\lambda) \otimes M \big)_\mathrm{reg} \cong \big( T_{\omega\Cdot0}^{\omega\Cdot\lambda} M \big)_\mathrm{reg} . \]
\end{Corollary}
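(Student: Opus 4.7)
The plan is to combine the linkage decomposition of $T(\lambda) \otimes M$ with the refined linkage principle in Lemma \ref{lem:regularpartlinkageclass}. By definition of the translation functor, and since $\lambda$ is the unique dominant weight in the $W_\mathrm{fin}$-orbit of $\omega\Cdot\lambda - \omega\Cdot 0$ (as observed in Corollary \ref{cor:translationquotient}), the module $T_{\omega\Cdot 0}^{\omega\Cdot\lambda} M$ coincides with $\pr_{\omega\Cdot\lambda}(T(\lambda) \otimes M)$. The linkage principle therefore provides a direct sum decomposition
\[ T(\lambda) \otimes M \;\cong\; T_{\omega\Cdot 0}^{\omega\Cdot\lambda} M \;\oplus\; N , \qquad N \coloneqq \bigoplus_{\substack{\nu \in \overline{C}_\mathrm{fund} \cap X \\ \nu \neq \omega\Cdot\lambda}} \pr_\nu\big( T(\lambda) \otimes M \big) . \]

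Next I would verify that $T(\lambda)$ is itself regular: by Lemma \ref{lem:minimaltiltingcomplex}(1) its minimal tilting complex is the one-term complex $T(\lambda)$ in degree $0$, and this is non-negligible because $\lambda \in C_\mathrm{fund} \cap X$. Hence $T(\lambda)_\mathrm{reg} = T(\lambda)$ lies in $\Rep_\lambda(\G)$, and since $M \in \Rep_{\omega\Cdot 0}(\G)$ we also have $M_\mathrm{reg} \in \Rep_{\omega\Cdot 0}(\G)$. Applying Lemma \ref{lem:regularpartlinkageclass} with these data yields
\[ (T(\lambda) \otimes M)_\mathrm{reg} \in \Rep_{\omega\Cdot\lambda}(\G) . \]

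To finish, every indecomposable direct summand of $N$ lies in a linkage class $\Rep_\nu(\G)$ with $\nu \neq \omega\Cdot\lambda$, so none of them can appear in $(T(\lambda) \otimes M)_\mathrm{reg}$; consequently $N$ is singular and $N_\mathrm{reg} = 0$. Combining this with the additivity of the regular part (Lemma \ref{lem:regularpartdirectsumtensorproduct}) gives the desired isomorphism. I do not expect a serious obstacle here: the entire content of the argument is the observation that the ``off-diagonal'' linkage components of $T(\lambda) \otimes M$ are automatically singular, which is precisely what Lemma \ref{lem:regularpartlinkageclass} delivers.
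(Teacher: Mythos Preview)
Your argument is correct and follows essentially the same route as the paper: both identify $T_{\omega\Cdot0}^{\omega\Cdot\lambda} M$ with $\pr_{\omega\Cdot\lambda}\big(T(\lambda)\otimes M\big)$ via Corollary~\ref{cor:translationquotient}, and then invoke Lemma~\ref{lem:regularpartlinkageclass} to see that all linkage components of $T(\lambda)\otimes M$ away from $\omega\Cdot\lambda$ are singular. The paper's version is a bit terser (it writes the conclusion as a chain of isomorphisms using that $\pr_{\omega\Cdot\lambda}$ commutes with taking the regular part), but the content is the same.
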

\begin{proof}
	Recall from Corollary \ref{cor:translationquotient} that $\lambda$ is the unique dominant weight in the $W_\mathrm{fin}$-orbit of $\omega\Cdot\lambda - \omega\Cdot0$, so $T_{\omega\Cdot0}^{\omega\Cdot\lambda} = \pr_{\omega\Cdot\lambda}\big( T(\lambda) \otimes - \big)$. Using Lemma \ref{lem:regularpartlinkageclass}, we obtain
	\[ \big( T(\lambda) \otimes M \big)_{\mathrm{reg}} \cong \pr_{\omega\Cdot\lambda}\Big( \big( T(\lambda) \otimes M \big)_{\mathrm{reg}} \Big) \cong \Big( \pr_{\omega\Cdot\lambda}\big( T(\lambda) \otimes M \big) \Big)_{\mathrm{reg}} \cong \big( T_{\omega\Cdot0}^{\omega\Cdot\lambda} M \big)_\mathrm{reg} , \] 
	as required.
\end{proof}

The following result is a non-functorial version of the `translation principle' for tensor products from Theorem \ref{thm:translationtensorquotient}.

\begin{Theorem} \label{thm:regularparttensorproduct}
	Let $\lambda,\mu \in C_\mathrm{fund} \cap X$ and let $M$ and $N$ be $\G$-modules in the linkage classes $\Rep_{\omega\Cdot0}(\G)$ and $\Rep_{\omega^\prime\Cdot0}(\G)$, respectively, for certain $\omega,\omega^\prime\in\Omega$.
	Then
	\[ \big( T_{\omega\Cdot0}^{\omega\Cdot\lambda} M \otimes T_{\omega^\prime\Cdot0}^{\omega^\prime\Cdot\mu} N \big)_\mathrm{reg} \cong \bigoplus_{\nu\in C_\mathrm{fund} \cap X} \big( T_{\omega\omega^\prime\Cdot0}^{\omega\omega^\prime\Cdot\nu} (M \otimes N)_\mathrm{reg} \big)^{\oplus c_{\lambda,\mu}^\nu}  \]
\end{Theorem}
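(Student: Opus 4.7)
The plan is to adapt the computation underlying Theorem \ref{thm:translationtensorquotient} from the functorial setting of $\regRep(\G)$ to the object-level setting of regular parts. Each of the natural transformations appearing in steps (1)--(5) of that proof, when evaluated on the specific $\G$-modules $M$ and $N$, becomes an isomorphism after taking regular parts, and I will chain these together directly using Lemma \ref{lem:regularpartdirectsumtensorproduct} (to push ``$_\mathrm{reg}$'' inside tensor products) and Corollary \ref{cor:regularparttranslation} (to exchange translation functors with tensoring by tilting modules).

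Concretely, I would first apply Corollary \ref{cor:regularparttranslation} to both $M$ and $N$, together with the identity $(A \otimes B)_\mathrm{reg} \cong (A_\mathrm{reg} \otimes B_\mathrm{reg})_\mathrm{reg}$ from Lemma \ref{lem:regularpartdirectsumtensorproduct}, to obtain
\[
\bigl(T_{\omega\Cdot 0}^{\omega\Cdot\lambda} M \otimes T_{\omega^\prime\Cdot 0}^{\omega^\prime\Cdot\mu} N \bigr)_\mathrm{reg}
\cong \bigl( (T(\lambda) \otimes M) \otimes (T(\mu) \otimes N) \bigr)_\mathrm{reg}.
\]
The braiding on $\Rep(\G)$ then rearranges this to $\bigl( (T(\lambda) \otimes T(\mu)) \otimes (M \otimes N) \bigr)_\mathrm{reg}$. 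Decomposing the tilting module
\[
T(\lambda) \otimes T(\mu) \cong N_\mathrm{neg} \oplus \bigoplus_{\nu \in C_\mathrm{fund} \cap X} T(\nu)^{\oplus c_{\lambda,\mu}^\nu}
\]
with $N_\mathrm{neg}$ negligible (and hence singular, since for a tilting module $T$ one has $C_\mathrm{min}(T) = T$ by Lemma \ref{lem:minimaltiltingcomplex}(1)), the fact that $\langle \mathcal{N} \rangle$ is a tensor ideal implies that $N_\mathrm{neg} \otimes (M \otimes N)$ contributes nothing to the regular part, leaving $\bigoplus_{\nu} \bigl( T(\nu) \otimes (M \otimes N) \bigr)_\mathrm{reg}^{\oplus c_{\lambda,\mu}^\nu}$. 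For each $\nu \in C_\mathrm{fund} \cap X$, Lemma \ref{lem:regularpartlinkageclass} places $(M \otimes N)_\mathrm{reg}$ in $\Rep_{\omega\omega^\prime\Cdot 0}(\G)$, so a further application of Lemma \ref{lem:regularpartdirectsumtensorproduct} and Corollary \ref{cor:regularparttranslation} yields
\[
\bigl( T(\nu) \otimes (M \otimes N) \bigr)_\mathrm{reg}
\cong \bigl( T(\nu) \otimes (M \otimes N)_\mathrm{reg} \bigr)_\mathrm{reg}
\cong \bigl( T_{\omega\omega^\prime\Cdot 0}^{\omega\omega^\prime\Cdot\nu} (M \otimes N)_\mathrm{reg} \bigr)_\mathrm{reg}.
\]

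The main obstacle I anticipate is the final cleanup: showing that the outer ``$_\mathrm{reg}$'' on the right is redundant, i.e., that $T_{\omega\omega^\prime\Cdot 0}^{\omega\omega^\prime\Cdot\nu} (M \otimes N)_\mathrm{reg}$ is itself regular with no singular indecomposable summand. This should follow from the observation that, for $\nu \in C_\mathrm{fund} \cap X$, the translation functor $T_{\omega\omega^\prime\Cdot 0}^{\omega\omega^\prime\Cdot\nu}$ is an equivalence between two $\ell$-regular linkage classes that carries the indecomposable tilting module $T(x \Cdot (\omega\omega^\prime\Cdot 0))$ to $T(x \Cdot (\omega\omega^\prime\Cdot \nu))$ for each $x \in W_\mathrm{aff}^+$. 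Since the negligibility of such a tilting module is detected purely by whether $x = e$, applying the equivalence termwise to a minimal tilting complex preserves the presence of non-negligible terms; hence the equivalence restricts to a bijection between regular indecomposables of the two blocks, and applying it to the regular indecomposable summands of $(M \otimes N)_\mathrm{reg}$ gives the desired conclusion.
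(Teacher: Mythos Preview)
Your proposal is correct and follows essentially the same route as the paper's proof: both reduce to $\bigl(T(\lambda)\otimes T(\mu)\otimes M\otimes N\bigr)_\mathrm{reg}$ via Lemma~\ref{lem:regularpartdirectsumtensorproduct} and Corollary~\ref{cor:regularparttranslation}, split off the negligible part of $T(\lambda)\otimes T(\mu)$, and then apply Corollary~\ref{cor:regularparttranslation} once more after invoking Lemma~\ref{lem:regularpartlinkageclass}. Your justification for dropping the outer ``$_\mathrm{reg}$'' in the last step is correct (and more detailed than what the paper writes---it simply asserts the isomorphism), since the translation equivalence between $\ell$-regular linkage classes sends minimal tilting complexes to minimal tilting complexes termwise and preserves (non-)negligibility of their terms.
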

\begin{proof}
	By Lemma \ref{lem:regularpartdirectsumtensorproduct} and Corollary \ref{cor:regularparttranslation}, we have
	\begin{align*}
	\big( T_{\omega\Cdot0}^{\omega\Cdot\lambda} M \otimes T_{\omega^\prime\Cdot0}^{\omega^\prime\Cdot\mu} N \big)_\mathrm{reg} & \cong \Big( \big( T_{\omega\Cdot0}^{\omega\Cdot\lambda} M \big)_\mathrm{reg} \otimes \big( T_{\omega^\prime\Cdot0}^{\omega^\prime\Cdot\mu} N \big)_\mathrm{reg} \Big)_\mathrm{reg} \\
	& \cong \Big( \big( T(\lambda) \otimes M \big)_\mathrm{reg} \otimes \big( T(\mu) \otimes N \big)_\mathrm{reg} \Big)_\mathrm{reg} \\
	& \cong \big( T(\lambda) \otimes M \otimes T(\mu) \otimes N \big)_\mathrm{reg} \\
	& \cong \Big( \big( T(\lambda) \otimes T(\mu) \big)_\mathrm{reg} \otimes \big( M \otimes N \big)_\mathrm{reg} \Big)_\mathrm{reg} .
	\end{align*}
	Now
	\[ \big( T(\lambda) \otimes T(\mu) \big)_\mathrm{reg} \cong \bigoplus_{\nu\in C_\mathrm{fund} \cap X} T(\nu)^{\oplus c_{\lambda,\mu}^\nu}  \]
	and $(M \otimes N)_\mathrm{reg}$ belongs to the linkage class $\Rep_{\omega\omega^\prime\Cdot0}(\G)$ by Lemma \ref{lem:regularpartlinkageclass}. Again using Corollary \ref{cor:regularparttranslation}, we obtain
	\[ \big( T(\nu) \otimes (M \otimes N)_\mathrm{reg} \big)_\mathrm{reg} \cong \big( T_{\omega\omega^\prime\Cdot0}^{\omega\omega^\prime\Cdot\nu} (M \otimes N)_\mathrm{reg} \big)_\mathrm{reg} \cong T_{\omega\omega^\prime\Cdot0}^{\omega\omega^\prime\Cdot\nu} (M \otimes N)_\mathrm{reg} \]
	for all $\nu\in C_\mathrm{fund} \cap X$, and we conclude that
	\begin{align*}
	\big( T_{\omega\Cdot0}^{\omega\Cdot\lambda} M \otimes T_{\omega^\prime\Cdot0}^{\omega^\prime\Cdot\mu} N \big)_\mathrm{reg} & \cong \bigoplus_{\nu\in C_\mathrm{fund} \cap X} \big( T(\nu) \otimes (M \otimes N)_\mathrm{reg} \big)_\mathrm{reg}^{\oplus c_{\lambda,\mu}^\nu} \\
	& \cong \bigoplus_{\nu\in C_\mathrm{fund} \cap X} \big( T_{\omega\omega^\prime\Cdot0}^{\omega\omega^\prime\Cdot\nu} (M \otimes N)_\mathrm{reg} \big)^{\oplus c_{\lambda,\mu}^\nu} ,
	\end{align*}
	as claimed.
\end{proof}

For future applications, let us briefly explain how the action of $\Omega$ can be used to compare the regular parts of tensor products of $\G$-modules with constituents belonging to different linkage classes in the extended principal block $\Rep_{\Omega\Cdot0}(\G)$.
For $\omega\in \Omega$, consider the auto-equivalence
\[ T^\omega \coloneqq \bigoplus_{\lambda\in\Omega\Cdot0} T_\lambda^{\omega\Cdot\lambda} \]
of $\Rep_{\Omega\Cdot0}(\G)$.

\begin{Lemma} \label{lem:regularpartfundamentalgroup}
	Let $M$ and $N$ be $\G$-modules in $\Rep_{\Omega\Cdot0}(\G)$ and let $\omega,\omega^\prime \in \Omega$. Then
	\[ \big( T^\omega M \otimes T^{\omega^\prime} N \big)_\mathrm{reg} \cong T^{\omega\omega^\prime} ( M \otimes N )_{\mathrm{reg}} . \]
\end{Lemma}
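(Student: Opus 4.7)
The plan is to reduce the claim to Theorem \ref{thm:regularparttensorproduct} after splitting $M$ and $N$ into their components along the linkage classes that make up the extended principal block. Write $M \cong \bigoplus_{\sigma \in \Omega} M_\sigma$ and $N \cong \bigoplus_{\tau \in \Omega} N_\tau$ with $M_\sigma \in \Rep_{\sigma \Cdot 0}(\G)$ and $N_\tau \in \Rep_{\tau \Cdot 0}(\G)$. By construction, $T^\omega$ acts on $\Rep_{\sigma \Cdot 0}(\G)$ as the translation functor $T_{\sigma \Cdot 0}^{\omega\sigma \Cdot 0}$, and similarly for $T^{\omega'}$. Since tensor products, regular parts, and the functor $T^{\omega\omega'}$ all distribute over finite direct sums (using Lemma \ref{lem:regularpartdirectsumtensorproduct} for the left-hand side), it suffices to establish the isomorphism for each pair $(M_\sigma, N_\tau)$.

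Next, I would apply Theorem \ref{thm:regularparttensorproduct} with $\lambda = \omega \Cdot 0$ and $\mu = \omega' \Cdot 0$; both lie in $C_\mathrm{fund} \cap X$ since $\Omega$ stabilises $C_\mathrm{fund}$ and $0 \in C_\mathrm{fund}$ under the standing assumption $\ell \geq h$. The relevant source weights are $\sigma \Cdot 0$ and $\tau \Cdot 0$, and (using that $\Omega \cong X/\Z\Phi$ is abelian) the target weights $\sigma \Cdot (\omega \Cdot 0) = \omega\sigma \Cdot 0$ and $\tau \Cdot (\omega' \Cdot 0) = \omega'\tau \Cdot 0$ match $T^\omega M_\sigma$ and $T^{\omega'} N_\tau$ respectively. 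The theorem yields
\[ (T^\omega M_\sigma \otimes T^{\omega'} N_\tau)_\mathrm{reg} \cong \bigoplus_{\nu \in C_\mathrm{fund} \cap X} \big( T_{\sigma\tau \Cdot 0}^{\sigma\tau \Cdot \nu} (M_\sigma \otimes N_\tau)_\mathrm{reg} \big)^{\oplus c_{\omega \Cdot 0,\, \omega' \Cdot 0}^\nu}. \]
The crux is then to evaluate the Verlinde multiplicities: by Lemma \ref{lem:VerlindeFundamentalGroup}, $T(\omega \Cdot 0) \otimes T(\omega' \Cdot 0) \cong T(\omega\omega' \Cdot 0)$ in $\Ver(\G)$, so $c_{\omega \Cdot 0,\, \omega' \Cdot 0}^\nu = \delta_{\nu,\, \omega\omega' \Cdot 0}$, and the entire sum collapses to the single summand $T_{\sigma\tau \Cdot 0}^{\omega\omega'\sigma\tau \Cdot 0}(M_\sigma \otimes N_\tau)_\mathrm{reg}$, again using commutativity of $\Omega$.

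To conclude, Lemma \ref{lem:regularpartlinkageclass} places $(M_\sigma \otimes N_\tau)_\mathrm{reg}$ in the linkage class $\Rep_{\sigma\tau \Cdot 0}(\G)$, on which $T^{\omega\omega'}$ is by definition the translation functor $T_{\sigma\tau \Cdot 0}^{\omega\omega'\sigma\tau \Cdot 0}$. Summing over $\sigma, \tau \in \Omega$ then identifies the right-hand side as $T^{\omega\omega'}(M \otimes N)_\mathrm{reg}$, as desired. I do not anticipate any serious obstacle; the argument is essentially bookkeeping, resting on two non-trivial inputs already available — the collapse of the Verlinde multiplicity through Lemma \ref{lem:VerlindeFundamentalGroup}, and the identification of the correct linkage class through Lemma \ref{lem:regularpartlinkageclass} — together with the commutativity of $\Omega$.
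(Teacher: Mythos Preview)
Your argument is correct. In fact, the paper explicitly notes at the start of its proof that one ``could deduce this as a special case of Theorem \ref{thm:regularparttensorproduct} where $\lambda$ and $\mu$ belong to $\Omega\Cdot0$, but to avoid excessive indexing, we prefer to prove the claim directly.'' You have carried out precisely that alternative: decompose into linkage-class components, invoke Theorem \ref{thm:regularparttensorproduct} with $\lambda=\omega\Cdot0$ and $\mu=\omega'\Cdot0$, and collapse the Verlinde sum via Lemma \ref{lem:VerlindeFundamentalGroup}. The paper instead unpacks the relevant steps from the proof of Theorem \ref{thm:regularparttensorproduct}, working directly with Corollary \ref{cor:regularparttranslation} to replace each $T^\omega$ by tensoring with $T(\omega\Cdot0)$, then rearranging via the braiding. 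Both routes rest on the same two non-trivial inputs (Lemma \ref{lem:VerlindeFundamentalGroup} and Lemma \ref{lem:regularpartlinkageclass}); the paper's version is shorter and avoids the $(\sigma,\tau)$-indexing, while yours makes the dependence on Theorem \ref{thm:regularparttensorproduct} transparent.
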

\begin{proof}
	We could deduce this as a special case of Theorem \ref{thm:regularparttensorproduct} where $\lambda$ and $\mu$ belong to $\Omega\Cdot0$, but to avoid excessive indexing, we prefer to prove the claim directly. (The reader will note that the proof is also just a special case of the proof of Theorem \ref{thm:regularparttensorproduct}.)
	By Lemma \ref{lem:regularpartdirectsumtensorproduct} and Corollary \ref{cor:regularparttranslation}, we have
	\begin{align*}
		\big( T^\omega M \otimes T^{\omega^\prime} N \big)_\mathrm{reg} & \cong \big( ( T^\omega M )_\mathrm{reg} \otimes ( T^{\omega^\prime} N )_\mathrm{reg} \big)_\mathrm{reg} \\
		& \cong \Big( \big( T(\omega\Cdot0) \otimes M \big)_\mathrm{reg} \otimes \big( T(\omega^\prime\Cdot0) \otimes N \big)_\mathrm{reg} \Big)_\mathrm{reg} \\
		& \cong \big( T(\omega\Cdot0) \otimes M \otimes T(\omega^\prime\Cdot0) \otimes N \big)_\mathrm{reg} \\
		& \cong \Big( \big( T(\omega\Cdot0) \otimes T(\omega^\prime\Cdot0) \big)_\mathrm{reg} \otimes \big( M \otimes N \big)_\mathrm{reg} \Big)_\mathrm{reg} , \quad
	\end{align*}
	where $\big( T(\omega\Cdot0) \otimes T(\omega^\prime\Cdot0) \big)_\mathrm{reg} \cong T(\omega\omega^\prime\Cdot0)$ by Lemma \ref{lem:VerlindeFundamentalGroup}.
	Again using Corollary \ref{cor:regularparttranslation}, we obtain
	 \[ \big( T(\omega\omega^\prime\Cdot0) \otimes ( M \otimes N )_\mathrm{reg} \big)_\mathrm{reg} \cong \big( T^{\omega\omega^\prime} ( M \otimes N )_\mathrm{reg} \big)_\mathrm{reg} \cong T^{\omega\omega^\prime} ( M \otimes N )_\mathrm{reg} , \]
	 and the claim follows.
\end{proof}

\section{Strong regularity} \label{sec:strongregularity}

For certain applications, it may be important to decide if the tensor product $M \otimes N$ of two $\G$-modules $M$ and $N$ is regular.
In the quantum case, it is sufficient to assume that $M$ and $N$ are regular (see Remark \ref{rem:tensorproductregular} below), but we do not know if this is true in the modular case.
To overcome this problem, we introduce the notion of \emph{strong regularity}.
For a non-zero $\G$-module $M$, the \emph{good filtration dimension} of $M$ is defined as
\[ \gfd(M) = \max\big\{ d \mathrel{\big|} \Ext_\G^d(\Delta(\lambda),M) \neq 0 \text{ for some } \lambda \in X^+ \big\} . \]
By Lemma 2.15 in \cite{GruberMinimalTilting}, we also have
\begin{equation} \label{eq:GFDminimaltilitng}
	\gfd(M) = \max\big\{ d \mathrel{\big|} C_\mathrm{min}(M)_d \neq 0 \big\}
\end{equation}

\begin{Definition}
	A non-zero $\G$-module $M$ with $\gfd(M) = d$ is called \emph{strongly regular} if $C_\mathrm{min}(M)_{d-1}$ is negligible and $C_\mathrm{min}(M)_d$ is non-negligible.
\end{Definition}

\begin{Remark} \label{rem:stronglyregularWeylmodulesimplemodule}
	Observe that, for $x \in W_\mathrm{ext}^+$ and $\lambda \in C_\mathrm{fund} \cap X$, the Weyl module $\Delta(x\Cdot\lambda)$ and the simple $\G$-module $L(x\Cdot\lambda)$ are both strongly regular of good filtration dimension $\ell(x)$.
	Indeed, by the description of the minimal tilting complexes of $\Delta(x\Cdot\lambda)$ and $L(x\Cdot\lambda)$ in Propositions \ref{prop:minimalcomplexWeylmodule} and \ref{prop:minimalcomplexsimplemodule}, the tilting modules $C_\mathrm{min}\big( \Delta(x\Cdot\lambda) \big)_i$ and $C_\mathrm{min}\big( L(x\Cdot\lambda) \big)_i$ are negligible for $i = \ell(x)-1$, non-negligible for $i = \ell(x)$ and zero for $i>\ell(x)$.
	The fact that $\Delta(x\Cdot\lambda)$ and $L(x\Cdot\lambda)$ have good filtration dimension $\ell(x)$ has already been observed by A.\ Parker in \cite{ParkerGFD}.
\end{Remark}

Our interest in strongly regular $\G$-modules is founded in the following result:

\begin{Lemma} \label{lem:stronglyregulartensorproduct}
	Let $M$ and $N$ be strongly regular $\G$-modules.
	Then $M \otimes N$ is strongly regular and
	\[ \gfd(M \otimes N) = \gfd(M) + \gfd(N) . \]
\end{Lemma}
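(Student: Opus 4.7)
The plan is to build on Lemma \ref{lem:minimaltiltingcomplex}(4), which provides a split monomorphism
\[ C_\mathrm{min}(M \otimes N) \hookrightarrow C \coloneqq C_\mathrm{min}(M) \otimes C_\mathrm{min}(N) , \]
and hence a decomposition $C \cong C_\mathrm{min}(M \otimes N) \oplus D$ in $C^b\big(\Tilt(\G)\big)$, where $D$ is a bounded, null-homotopic complex of tilting modules. Setting $d_1 = \gfd(M)$, $d_2 = \gfd(N)$ and $d = d_1 + d_2$, I will first analyze the top two terms of $C$, then transfer this information to $C_\mathrm{min}(M \otimes N)$ via the splitting.

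By \eqref{eq:GFDminimaltilitng}, $C_i = 0$ for $i > d$, and the tensor product decomposition yields $C_d \cong C_\mathrm{min}(M)_{d_1} \otimes C_\mathrm{min}(N)_{d_2}$ together with
\[ C_{d-1} \cong \big(C_\mathrm{min}(M)_{d_1-1} \otimes C_\mathrm{min}(N)_{d_2}\big) \oplus \big(C_\mathrm{min}(M)_{d_1} \otimes C_\mathrm{min}(N)_{d_2-1}\big) . \]
Strong regularity of $M$ and $N$ tells me that $C_\mathrm{min}(M)_{d_1-1}$ and $C_\mathrm{min}(N)_{d_2-1}$ are negligible, and since $\mathcal{N}$ is a tensor ideal in $\Tilt(\G)$, I conclude that $C_{d-1}$ is negligible. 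On the other hand, $C_\mathrm{min}(M)_{d_1}$ and $C_\mathrm{min}(N)_{d_2}$ are non-negligible, so each contains an indecomposable summand $T(\lambda)$, respectively $T(\mu)$, with $\lambda, \mu \in C_\mathrm{fund} \cap X$. Lemma \ref{lem:Verlindecoefficientnonzero} then ensures that $T(\lambda) \otimes T(\mu)$ has a non-negligible direct summand, so $C_d$ is non-negligible.

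The final step is to transport these properties across the splitting $C \cong C_\mathrm{min}(M \otimes N) \oplus D$. Since $C^b\big(\Tilt(\G)\big)$ is Krull-Schmidt and $D$ is bounded and contractible, $D$ decomposes as a direct sum of elementary contractibles of the form $(\cdots 0 \to T \xrightarrow{\id} T \to 0 \cdots)$ supported in two consecutive degrees. Because $D_{d+1} \stackrel{\oplus}{\subseteq} C_{d+1} = 0$, no such summand of $D$ is supported in degrees $(d, d+1)$; consequently, every indecomposable summand of $D_d$ also occurs as a summand of $D_{d-1} \stackrel{\oplus}{\subseteq} C_{d-1}$. Since $C_{d-1}$ is negligible, so is $D_d$. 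It follows that every non-negligible summand of $C_d$ is contained in $C_\mathrm{min}(M \otimes N)_d$, whence the latter is non-negligible (and in particular non-zero). Combined with the fact that $C_\mathrm{min}(M \otimes N)_{d-1}$ is a direct summand of the negligible module $C_{d-1}$, this establishes via \eqref{eq:GFDminimaltilitng} both that $\gfd(M \otimes N) = d_1 + d_2$ and that $M \otimes N$ is strongly regular.

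The main obstacle is the minimization step: showing that the non-negligibility witnessed at the top degree of $C$ survives in $C_\mathrm{min}(M \otimes N)$. This hinges on the explicit description of contractible complexes in a Krull-Schmidt category, together with the crucial observation that $D_{d+1} = 0$ forbids any elementary contractible at $(d, d+1)$ from absorbing the non-negligible summands of $C_d$.
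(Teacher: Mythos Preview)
Your proof is correct and follows essentially the same approach as the paper: analyze the top two degrees of the tensor product complex $C_\mathrm{min}(M)\otimes C_\mathrm{min}(N)$, use Lemma~\ref{lem:Verlindecoefficientnonzero} to produce a non-negligible summand in top degree, observe that the degree below is negligible, and then argue that this survives minimization. The only difference is in how the minimization step is justified: the paper invokes an external result (Corollary~2.8 in \cite{GruberMinimalTilting}) to conclude that the multiplicity of $T(\delta)$ in degree $d$ is unchanged, whereas you give the argument directly by decomposing the contractible complement $D$ into elementary two-term complexes and using $D_{d+1}=0$ to force $D_d$ to be negligible. Your version is more self-contained but amounts to reproving the cited corollary in this special case.
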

\begin{proof}
	Set $d=\gfd(M)$ and $d^\prime=\gfd(N)$, so that $C_\mathrm{min}(M)_i=0$ for $i>d$ and $C_\mathrm{min}(N)_i=0$ for $i>d^\prime$ by equation \eqref{eq:GFDminimaltilitng}.
	Since $M$ and $N$ are strongly regular, there exist $\nu,\nu^\prime\in C_\mathrm{fund} \cap X$ such that
	\[ \big[ C_\mathrm{min}(M)_d: T(\nu) \big]_\oplus \neq 0 \qquad \text{and} \qquad \big[ C_\mathrm{min}(N)_{d^\prime}: T(\nu^\prime) \big]_\oplus \neq 0 , \]
	and by Lemma \ref{lem:Verlindecoefficientnonzero}, there exists $\delta \in C_\mathrm{fund} \cap X$ with $\big[ T(\nu) \otimes T(\nu^\prime) : T(\delta) \big]_\oplus \neq 0$.
	Thus $T(\delta)$ appears as a direct summand of the tensor product $C_\mathrm{min}(M)_d \otimes C_\mathrm{min}(N)_{d^\prime}$, which is the degree $d+d^\prime$ term of the tensor product complex $C_\mathrm{min}(M) \otimes C_\mathrm{min}(N)$.
	Furthermore, the degree $d+d^\prime-1$ term
	\[ \big( C_\mathrm{min}(M)_{d-1} \otimes C_\mathrm{min}(N)_{d^\prime} \big) \oplus \big( C_\mathrm{min}(M)_d \otimes C_\mathrm{min}(N)_{d^\prime-1} \big) \]
	of the tensor product complex is negligible, and the terms in degree $i>d+d^\prime$ of the tensor product complex are zero.
	Now $C_\mathrm{min}(M \otimes N)$ is the minimal complex of $C_\mathrm{min}(M) \otimes C_\mathrm{min}(N)$ by part(4) of Lemma \ref{lem:minimaltiltingcomplex}, hence $C_\mathrm{min}(M \otimes N)_{d+d^\prime-1}$ is negligible and $C_\mathrm{min}(M \otimes N)_i = 0$ for $i>d+d^\prime$.
	As the terms of the tensor product complex $C_\mathrm{min}(M) \otimes C_\mathrm{min}(N)$ in degrees $d+d^\prime-1$ and $d+d^\prime+1$ are negligible or zero, respectively, 
	Corollary 2.8 in \cite{GruberMinimalTilting}
	implies that
	\[ 0 \neq \big[ C_\mathrm{min}(M)_d \otimes C_\mathrm{min}(N)_{d^\prime} : T(\delta) \big]_\oplus = \big[ C_\mathrm{min}(M \otimes N)_{d+d^\prime} : T(\delta) \big]_\oplus . \]
	Finally, equation \eqref{eq:GFDminimaltilitng} yields $\gfd(M \otimes N) = d + d^\prime$, and it follows that $M \otimes N$ is strongly regular.
\end{proof}

\begin{Remark} \label{rem:tensorproductregular}
	In the quantum case, we claim that the tensor product $M \otimes N$ of two regular $\G$-modules $M$ and $N$ is always regular, if $\ell>h$.
	Observe that the claim is equivalent to the statement that singular $\G$-modules form a \emph{prime ideal}, i.e.\ that the tensor product $M \otimes N$ of two $\G$-modules $M$ and $N$ is singular only if at least one of $M$ and $N$ is singular.
	By \cite[Lemma 2.8]{GruberTensorIdeals} and its proof, we have
	\[ \langle \mathcal{N} \rangle = \bigcap_{ \langle \mathcal{N} \rangle \subseteq \mathcal{P} } \mathcal{P} = \bigcap_{ \langle \mathcal{N} \rangle \subseteq \mathcal{P} } \langle \mathcal{P} \cap \Tilt(\G) \rangle , \]
	where the intersection runs over the prime thick tensor ideals $\mathcal{P}$ in $\Rep(\G)$ with the 2/3-property such that $\langle \mathcal{N} \rangle \subseteq \mathcal{P}$.
	For any such tensor ideal $\mathcal{P}$, we have $\mathcal{N} \subseteq \mathcal{P} \cap \Tilt(\G)$, and as $\mathcal{N}$ is maximal among the proper thick tensor ideals in $\Tilt(\G)$ (by Lemma \ref{lem:Verlindecoefficientnonzero}), it follows that $\mathcal{N} = \mathcal{P} \cap \Tilt(\G)$ and
	\[ \langle \mathcal{N} \rangle = \langle \mathcal{P} \cap \Tilt(\G) \rangle = \mathcal{P} , \]
	again by Lemma 2.8 in \cite{GruberTensorIdeals}.
	Hence $\langle \mathcal{N} \rangle$ is prime, as required.
	We do not know if the analogous statement is true in the modular case.
\end{Remark}

Lemma \ref{lem:stronglyregulartensorproduct} allows us to prove the following generalization to tensor products of A.~Parker's results from \cite{ParkerGFD} about the good filtration dimension of Weyl modules and simple $\G$-modules:

\begin{Theorem} \label{thm:tensorproductweylmodulesimplemoduleGFD}
	Let $x_1,\ldots,x_m,y_1,\ldots,y_n\in W_\mathrm{ext}^+$ and $\lambda_1,\ldots,\lambda_m,\mu_1,\ldots,\mu_n\in C_\mathrm{fund} \cap X$.
	Then the tensor product
	\[ \Delta(x_1\Cdot\lambda_1) \otimes \cdots \otimes \Delta(x_m\Cdot\lambda_m) \otimes L(y_1\Cdot\mu_1) \otimes \cdots \otimes L(y_n\Cdot\mu_n) \]
	is strongly regular and has good filtration dimension $\ell(x_1) + \cdots + \ell(x_m) + \ell(y_1) + \cdots + \ell(y_n)$.
\end{Theorem}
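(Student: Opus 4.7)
The proof is a straightforward induction on the total number $m+n$ of tensor factors, combining the two main ingredients of this section. The base case $m+n = 1$ is precisely Remark \ref{rem:stronglyregularWeylmodulesimplemodule}, which records that every Weyl module $\Delta(x\Cdot\lambda)$ and every simple module $L(x\Cdot\lambda)$ with $x \in W_\mathrm{ext}^+$ and $\lambda \in C_\mathrm{fund} \cap X$ is strongly regular of good filtration dimension $\ell(x)$.

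For the inductive step, assume $m+n \geq 2$ and that the result is known for all tensor products of fewer factors of the prescribed type. Isolate one tensor factor, say the rightmost one, and write the tensor product under consideration as $A \otimes B$, where $A$ is a single Weyl module $\Delta(x_i\Cdot\lambda_i)$ or simple module $L(y_j\Cdot\mu_j)$ and $B$ is the tensor product of the remaining $m+n-1$ factors. By Remark \ref{rem:stronglyregularWeylmodulesimplemodule}, $A$ is strongly regular with $\gfd(A)$ equal to the length of the associated Weyl group element; by the induction hypothesis, $B$ is strongly regular with $\gfd(B)$ equal to the sum of the lengths of the Weyl group elements appearing in the remaining factors. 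Lemma \ref{lem:stronglyregulartensorproduct} then yields immediately that $A \otimes B$ is strongly regular with
\[ \gfd(A \otimes B) = \gfd(A) + \gfd(B) = \ell(x_1) + \cdots + \ell(x_m) + \ell(y_1) + \cdots + \ell(y_n) , \]
which is exactly the claim.

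There is no real obstacle here, since all the substantive work has already been carried out in Propositions \ref{prop:minimalcomplexWeylmodule} and \ref{prop:minimalcomplexsimplemodule} (which underpin Remark \ref{rem:stronglyregularWeylmodulesimplemodule}) and in Lemma \ref{lem:stronglyregulartensorproduct}. The only minor point to notice is that the tensor product is non-zero, so that strong regularity is well-defined; this is immediate because each tensor factor is non-zero and $\Rep(\G)$ consists of modules over a Hopf algebra (or group scheme) over a field.
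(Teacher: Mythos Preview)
Your proof is correct and follows essentially the same approach as the paper: invoke Remark~\ref{rem:stronglyregularWeylmodulesimplemodule} for the individual factors and then apply Lemma~\ref{lem:stronglyregulartensorproduct} inductively on $m+n$. The paper's argument is phrased more tersely but is otherwise identical.
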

\begin{proof}
	For $1 \leq i \leq m$ and $1 \leq j \leq n$, the $\G$-modules $\Delta(x_i\Cdot\lambda_i)$ and $L(y_j\Cdot\mu_j)$ are strongly regular by Remark \ref{rem:stronglyregularWeylmodulesimplemodule}, and their good filtration dimensions are given by
	\[ \gfd\big( \Delta(x_i\Cdot\lambda_i) \big) = \ell(x_i) \qquad \text{and} \qquad \gfd\big( L(y_j\Cdot\lambda_j) \big) = \ell(y_j) . \]
	The claim follows from Lemma \ref{lem:stronglyregulartensorproduct}, by induction on $m+n$.
\end{proof}

\section{An application to translation functors} \label{sec:translation}

In this section, we give a further application of the tensor ideal $\langle \mathcal{N} \rangle$ of singular $\G$-modules, which may be of independent interest.
We prove that the composition of two translation functors between $\ell$-regular linkage classes is naturally isomorphic to a translation functor.
This statement should not be very surprising to experts in the field, but we are not aware of a proof in the literature.

\begin{Proposition} \label{prop:compositionoftranslationfunctors}
	Let $\lambda,\mu,\delta\in C_\mathrm{fund} \cap X$. Then there is an isomorphism of functors $T_\delta^\mu \cong T_\lambda^\mu \circ T_\delta^\lambda$.
\end{Proposition}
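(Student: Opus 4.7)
Let $\nu_1, \nu_2, \nu_3 \in X^+$ be the dominant weights in the $W_\mathrm{fin}$-orbits of $\lambda - \delta$, $\mu - \lambda$, and $\mu - \delta$, respectively. By Subsection~\ref{subsec:linkagetranslation}, each translation functor can be realized using an indecomposable tilting module:
\[ T_\delta^\lambda = \pr_\lambda\bigl(T(\nu_1) \otimes -\bigr) , \qquad T_\lambda^\mu = \pr_\mu\bigl(T(\nu_2) \otimes -\bigr) , \qquad T_\delta^\mu = \pr_\mu\bigl(T(\nu_3) \otimes -\bigr) . \]
For $M \in \Rep_\delta(\G)$, the composition unfolds as
\[ T_\lambda^\mu T_\delta^\lambda M = \pr_\mu\bigl(T(\nu_2) \otimes \pr_\lambda(T(\nu_1) \otimes M)\bigr) , \]
so the goal is to produce a natural isomorphism of this with $\pr_\mu\bigl(T(\nu_3) \otimes M\bigr) = T_\delta^\mu M$.

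The first step is to drop the inner projection $\pr_\lambda$, establishing
$T_\lambda^\mu T_\delta^\lambda M \cong \pr_\mu\bigl(T(\nu_2) \otimes T(\nu_1) \otimes M\bigr).$
Using the linkage decomposition $T(\nu_1) \otimes M = \bigoplus_\sigma \pr_\sigma(T(\nu_1) \otimes M)$, this reduces to showing that $\pr_\mu\bigl(T(\nu_2) \otimes \pr_\sigma(T(\nu_1) \otimes M)\bigr) = 0$ for every $\sigma \in \overline{C}_\mathrm{fund} \cap X$ with $\sigma \neq \lambda$. This follows from a weight-theoretic check: the weights of $T(\nu_2)$ lie in the convex hull of $W_\mathrm{fin}(\nu_2) = W_\mathrm{fin}(\mu - \lambda)$, and one verifies that for no such weight $\xi$ and for no $y, y' \in W_\mathrm{aff}$ does $y\Cdot\sigma + \xi = y'\Cdot\mu$ when $\sigma \neq \lambda$.

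The second step decomposes the tilting module $V \coloneqq T(\nu_2) \otimes T(\nu_1)$ into indecomposable summands. Since $\nu_3$ arises from $(\mu - \lambda) + (\lambda - \delta) \in W_\mathrm{fin}(\nu_1) + W_\mathrm{fin}(\nu_2)$, a standard tilting character argument shows that $T(\nu_3)$ appears in $V$ as a direct summand with multiplicity one. For every other indecomposable summand $T(\kappa)$ of $V$, an analogous weight-theoretic argument shows $\pr_\mu\bigl(T(\kappa) \otimes M\bigr) = 0$ for $M \in \Rep_\delta(\G)$, since no weight of $T(\kappa)$ bridges the affine dot orbit of $\delta$ to that of $\mu$. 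Combining, $T_\lambda^\mu T_\delta^\lambda M \cong \pr_\mu\bigl(T(\nu_3) \otimes M\bigr) = T_\delta^\mu M$; naturality in $M$ is automatic because every intermediate identification is given by a natural transformation (block projections, the associativity and braiding of the tensor product, and a fixed tilting decomposition of $V$).

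The main obstacle is the careful weight-theoretic bookkeeping in both steps — specifically, verifying the multiplicity of $T(\nu_3)$ in $V$ and the vanishing of the contributions from the other indecomposable summands $T(\kappa)$ to $\pr_\mu(\cdot \otimes M)$. These verifications are essentially adaptations of Jantzen's arguments in Sections II.7.6--7 of \cite{Jantzen} from the single tilting module $T(\nu_3)$ to the tensor product $T(\nu_2) \otimes T(\nu_1)$; the tilting nature of the factors should make the character computations tractable, but handling all the $\ell$-singular intermediate linkage classes requires attention.
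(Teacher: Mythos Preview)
The weight-theoretic vanishing claims underpinning both steps are false. Take $\G$ of type $\mathrm{A}_1$ with $\ell = 5$ and set $\delta = \mu = 0$, $\lambda = 3$, so that $\nu_1 = \nu_2 = 3$ and $\nu_3 = 0$. One computes $T(3) \otimes T(3) \cong T(6) \oplus T(4) \oplus T(0)$, and your Step~2 requires $\pr_0\big(T(\kappa) \otimes M\big) = 0$ for $\kappa \in \{4,6\}$ and all $M \in \Rep_0(\G)$. But for $M = \nabla(8)$ (note $8 = s_0 \Cdot 0$), the module $T(4) \otimes \nabla(8)$ has a good filtration containing $\nabla(10)$ as a section, and $10 \in W_\mathrm{aff} \Cdot 0$, so $\pr_0\big(T(4) \otimes \nabla(8)\big) \neq 0$. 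More bluntly, $0$ is a weight of $T(4)$, so the asserted non-bridging condition fails outright. Step~1 fails for the same reason: with $\sigma = 1 \neq \lambda$, the weight $-1$ of $T(\nu_2) = T(3)$ already satisfies $1 + (-1) = 0 \in W_\mathrm{aff} \Cdot \mu$, and indeed $\pr_0\big(T(3) \otimes \pr_1(T(3) \otimes \nabla(8))\big) \neq 0$. Jantzen's argument in II.7.6--7 pins down the weights of a module of highest weight $\nu_2 \in W_\mathrm{fin}(\mu - \lambda)$ that bridge $W_\mathrm{aff} \Cdot \lambda$ to $W_\mathrm{aff} \Cdot \mu$; it gives no control over bridges from \emph{other} linkage classes $\sigma \neq \lambda$, and none should be expected.

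What is true is that these extra contributions, while nonzero, are \emph{singular} in the sense of Section~\ref{sec:singularmodules}; for instance $\pr_0\big(T(4)\otimes T(8)\big) \cong T(10)$ is a negligible tilting module. The paper's proof exploits exactly this. After reducing to $\delta = 0$, it writes down an explicit natural transformation $\vartheta\colon T_\lambda^\mu \circ T_0^\lambda \Rightarrow T_0^\mu$ and uses Corollary~\ref{cor:translationquotient} and Lemma~\ref{lem:regularpartlinkageclass} to show that $q\vartheta$ is an isomorphism in the regular quotient $\regRep(\G)$. Since each $L(x\Cdot 0)$ with $x \in W_\mathrm{aff}^+$ is regular, the component $\vartheta_{L(x\Cdot 0)}$ is then nonzero, hence an isomorphism by Schur's lemma, and a snake-lemma induction on composition length extends this to all of $\Rep_0(\G)$. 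The general case follows formally using $T_0^\lambda \circ T_\lambda^0 \cong \id$.
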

\begin{proof}
	First suppose that $\delta=0$ and recall that
	\[ T_0^\lambda = \pr_\lambda \big( \nabla(\lambda) \otimes - \big) , \qquad T_0^\mu = \pr_\mu \big( \nabla(\mu) \otimes - \big) \qquad \text{and} \qquad T_\lambda^\mu = \pr_\mu \big(  \nabla(\nu) \otimes - \big) , \]
	where $\nu$ is the unique dominant weight in the $W_\mathrm{fin}$-orbit of $\mu-\lambda$.
	Consider the functor
	\[ \Psi \coloneqq \pr_\mu \big( \nabla(\nu) \otimes \nabla(\lambda) \otimes - \big) , \]
	and note that the canonical embedding of functors $T_0^\lambda \Longrightarrow \big( \nabla(\lambda) \otimes - \big)$ gives rise to a natural transformation $T_\lambda^\mu \circ T_0^\lambda \Longrightarrow \Psi$.
	Furthermore, we have
	\[ \nabla(\mu) \cong T_\lambda^\mu \nabla(\lambda) = \pr_\mu \big( \nabla(\nu) \otimes \nabla(\lambda) \big) , \]
	and the canonical projection $\nabla(\nu) \otimes \nabla(\lambda) \to \nabla(\mu)$ affords a natural transformation $\Psi \Longrightarrow T_0^\mu$.
	We claim that the composition of these natural transformations
	\[ \vartheta \colon T_\lambda^\mu \circ T_0^\lambda \Longrightarrow \Psi \Longrightarrow T_0^\mu \]
	is a natural isomorphism.
	
	 Let $N$ be a complement of $\nabla(\mu) \cong \pr_\mu \big( \nabla(\nu) \otimes \nabla(\lambda) \big)$ in $\nabla(\lambda) \otimes \nabla(\nu)$, and observe that $\pr_\mu N = 0$.
	 For a $\G$-module $M$ in $\Rep_0(\G)$, we have
	\[ ( M \otimes N )_\mathrm{reg} \cong \bigoplus_{\nu \in C_\mathrm{fund} \cap X} (M \otimes \pr_\nu N)_\mathrm{reg} \]
	by the linkage principle and Lemma \ref{lem:singularidealsingularlinkageclasses}, and as $(M \otimes \pr_\nu N)_\mathrm{reg}$ belongs to $\Rep_\nu(\G)$ by Lemma \ref{lem:regularpartlinkageclass}, we conclude that
	\[ \pr_\nu( M \otimes N )_\mathrm{reg} \cong ( M \otimes \pr_\nu N )_\mathrm{reg} \]
	for all $\nu \in C_\mathrm{fund} \cap X$.
	 In particular, the functor $\pr_\mu( N \otimes - )$ maps every $\G$-module in $\Rep_0(\G)$ into the tensor ideal $\langle \mathcal{N} \rangle$ of singular $\G$-modules.
	 As $\Psi$ decomposes as the direct sum of the functors $T_0^\mu$ and $\pr_\mu( N \otimes - )$, this implies that all components of the natural transformation $\Psi \Longrightarrow T_0^\mu$ descend to isomorphisms in the regular quotient $\regRep(\G)$.
	 Similarly, the embedding of functors
	 \[ T_0^\lambda \Longrightarrow \big( \nabla(\lambda) \otimes - \big) \cong \big( T(\lambda) \otimes - \big)\]
	 descends to a natural isomorphism in $\regRep(\G)$ by Corollary \ref{cor:translationquotient}, and it follows that the same is true for the natural transformation
	 \[ T_\lambda^\mu \circ T_0^\lambda \Longrightarrow \pr_\mu\big( \nabla(\nu) \otimes - \big) \circ \big( \nabla(\lambda) \otimes - \big) = \Psi . \]
	 In particular, the component of $\vartheta$ at any simple $\G$-module $L(x\Cdot0)$ with $x\in W_\mathrm{aff}^+$ affords an isomorphism in $\regRep(\G)$.
	 Now $L(x\Cdot0)$ is non-zero in $\regRep(\G)$ by Lemma \ref{lem:regularquotientregularweight}, whence the endomorphism algebra of $L(x\Cdot0)$ in $\regRep(\G)$ is also non-zero.
	 Since the latter endomorphism algebra is a quotient of the endomorphism algebra of $L(x\Cdot0)$ in $\Rep(\G)$,
	 we conclude that the component of $\vartheta$ at $L(x\Cdot0)$ is non-zero;
	 hence it affords an isomorphism between $T_\lambda^\mu T_0^\lambda L(x\Cdot0) \cong L(x\Cdot\mu)$ and $T_0^\mu L(x\Cdot0) \cong L(x\Cdot\mu)$, by Schur's Lemma.
	 Using the snake Lemma and induction on the length of a composition series, one easily deduces that the component of $\vartheta$ at every $\G$-module in $\Rep_0(\G)$ is an isomorphism, so $\vartheta$ is a natural isomorphism, as claimed.
	 
	 Now since $T_0^\lambda \circ T_\lambda^0$ is isomorphic to the identity functor on $\Rep_\lambda(\G)$, we further obtain isomorphisms of functors
	\[ T_\lambda^\mu \cong T_\lambda^\mu \circ T_0^\lambda \circ T_\lambda^0 \cong T_0^\mu \circ T_\lambda^0 . \]
	For arbitrary $\delta\in C_\mathrm{fund} \cap X$, we conclude that
	\[ T_\mu^\delta \circ T_\lambda^\mu \cong T_0^\delta \circ T_\mu^0 \circ T_0^\mu \circ T_\lambda^0 \cong T_0^\delta \circ T_\lambda^0 \cong T_\lambda^\delta , \]
	as required.
\end{proof}

\section{Tensor product decomposition of \texorpdfstring{$\regRep(\G)$}{the regular quotient}}
\label{sec:tensorproductdecomposition}

In this section, we explain how the results from Section \ref{sec:singularmodules} give rise to an external tensor product decomposition
\[ \regRep(\G) \cong \Ver(\G) \boxtimes \regRep_0(\G) \]
of the regular quotient.
We first explain what we mean by the external tensor product of two additive $\kk$-linear categories.

Let $\mathcal{A}$ and $\mathcal{B}$ be two additive $\kk$-linear categories.
Following \cite[Section 1.4]{KellyEnriched}, we define the tensor product $\mathcal{A} \otimes \mathcal{B}$ as the $\kk$-linear category with objects $\mathrm{Ob}(\mathcal{A} \otimes \mathcal{B}) = \mathrm{Ob}(\mathcal{A}) \times \mathrm{Ob}(\mathcal{B})$ and homomorphisms
\[ \Hom_{\mathcal{A} \otimes \mathcal{B}}( A \boxtimes B , A^\prime \boxtimes B^\prime ) = \Hom_\mathcal{A}(A,A^\prime) \otimes \Hom_\mathcal{B}(B,B^\prime) \]
for $A , A^\prime \in \mathrm{Ob}(\mathcal{A})$ and $B , B^\prime \in \mathrm{Ob}(\mathcal{B})$,
where we write $A \boxtimes B$ for the object of $\mathcal{A} \otimes \mathcal{B}$ corresponding to $A$ and $B$.
The composition of homomorphisms in $\mathcal{A} \otimes \mathcal{B}$ is induced by the composition in $\mathcal{A}$ and $\mathcal{B}$ in the obvious way.
The category $\mathcal{A} \otimes \mathcal{B}$ need not be additive, so we define $\mathcal{A} \boxtimes \mathcal{B}$ to be its additive envelope.%
\footnote{This means that objects of $\mathcal{A} \boxtimes \mathcal{B}$ are finite tuples of objects of $\mathcal{A} \otimes \mathcal{B}$, and homomorphisms are given by matrices whose entries are homomorphisms in $\mathcal{A} \otimes \mathcal{B}$.}
Thus $\mathcal{A} \boxtimes \mathcal{B}$ is an additive $\kk$-linear category.
If $\mathcal{A}$ and $\mathcal{B}$ are endowed with $\kk$-linear monoidal structures then there is a canonical $\kk$-linear monoidal structure on $\mathcal{A} \otimes \mathcal{B}$, and the latter extends uniquely to a $\kk$-linear monoidal structure on $\mathcal{A} \boxtimes \mathcal{B}$.

Now let $\mathcal{C}$ be another $\kk$-linear category.
According to Section 1.5 in \cite{KellyEnriched}, a $\kk$-linear functor 
\[ F \colon \mathcal{A} \otimes \mathcal{B} \longrightarrow \mathcal{C} \]
can equivalently be defined by two families of $\kk$-linear functors $F_A \colon \mathcal{B} \to \mathcal{C}$ and $G_B \colon \mathcal{A} \to \mathcal{C}$, indexed by $\mathrm{Ob}(\mathcal{A})$ and $\mathrm{Ob}(\mathcal{B})$, respectively, such that
\[ F_A(B) = G_B(A) = F(A \boxtimes B) \]
and such that the following diagram commutes, for all $A,A^\prime \in \mathrm{Ob}(\mathcal{A})$ and $B,B^\prime \in \mathrm{Ob}(\mathcal{B})$.
\begin{equation} \label{eq:functorfromtensorproduct}
\begin{tikzpicture}[scale=.85,every node/.style={scale=0.85},baseline=(current  bounding  box.center)]
	\node (A1) at (0,0) {$\Hom_\mathcal{A}(A,A^\prime) \otimes \Hom_\mathcal{B}(B,B^\prime)$};
	\node (A2) at (0,-4) {$\Hom_\mathcal{B}(B,B^\prime) \otimes \Hom_\mathcal{A}(A,A^\prime)$};
	\node (A3) at (10,0) {$\Hom_\mathcal{C}\big( F(A \boxtimes B^\prime) , F(A^\prime \boxtimes B^\prime) \big) \otimes \Hom_\mathcal{C}\big( F(A \boxtimes B) , F(A \boxtimes B^\prime) \big)$};
	\node (A4) at (10,-2) {$\Hom_\mathcal{C}\big( F(A \boxtimes B) , F(A^\prime \boxtimes B^\prime) \big)$};
	\node (A5) at (10,-4) {$\Hom_\mathcal{C}\big( F(A^\prime \boxtimes B) , F(A^\prime \boxtimes B^\prime) \big) \otimes \Hom_\mathcal{C}\big( F(A \boxtimes B) , F(A^\prime \boxtimes B) \big)$};
	
	\draw[->] (A1) -- (A2);
	\draw[->] (A1) -- node[above] {\footnotesize $G_{B^\prime} \otimes F_A$} (A3);
	\draw[->] (A2) -- node[below] {\footnotesize $F_{A^\prime} \otimes G_B$} (A5);
	\draw[->] (A3) -- (A4);
	\draw[->] (A5) -- (A4);
\end{tikzpicture}
\end{equation}
The vertical arrow on the left hand side is induced by the braiding in the category of $\kk$-vector spaces and the vertical arrows on the right hand side denote the composition of morphisms in $\mathcal{C}$.

By the above discussion, the additive $\kk$-linear category $\Ver(\G) \boxtimes \regRep_0(\G)$ has a $\kk$-linear monoidal structure.
For any tilting $\G$-module $T$, the functor $(T \otimes -) \colon \Rep_0(\G) \to \Rep(\G)$ induces a functor
\[ F_T = ( T \otimes - ) \colon \regRep_0(\G) \longrightarrow \regRep(\G) \]
because singular $\G$-modules form a thick tensor ideal, and similarly, for any $\G$-module $M$ in $\Rep_0(\G)$, the functor $(- \otimes M) \colon \Tilt(\G) \to \Rep(\G)$ induces a functor
\[ G_M = ( - \otimes M ) \colon \Ver(\G) \longrightarrow \regRep(\G) \]
because negligible tilting modules are singular.
We have
\[ F_T(M) = G_M(T) = T \otimes M , \]
and the diagram \eqref{eq:functorfromtensorproduct} commutes because
\[ ( \id_{T^\prime} \otimes g ) \circ ( f \otimes \id_M ) = f \otimes g = ( f \otimes \id_{M^\prime} ) \circ ( \id_T \otimes g ) \]
for all homomorphisms $f \colon T \to T^\prime$ in $\Tilt(\G)$ and $g \colon M \to M^\prime$ in $\Rep_0(G)$.
Hence there exists a unique $\kk$-linear functor
\[ F \colon \Ver(\G) \otimes \regRep_0(\G) \longrightarrow \regRep(\G) \]
such that
\[ F( T \boxtimes - ) = ( T \otimes - ) \qquad \text{and} \qquad F( - \boxtimes M ) = ( - \otimes M ) \]
for all tilting $\G$-modules $T$ and all $\G$-modules $M$ in $\Rep_0(\G)$.
Furthermore, the braiding on $\Rep(\G)$ defines a natural isomorphism $F( - \otimes - ) \cong F( - ) \otimes F( - )$ with components
\begin{multline*}
F\big( ( T \boxtimes M ) \otimes ( T^\prime \boxtimes M^\prime ) \big) = F\big( ( T \boxtimes T^\prime ) \otimes ( M \boxtimes M^\prime ) \big) = T \otimes T^\prime \otimes M \otimes M^\prime \\ \cong T \otimes M \otimes T^\prime \otimes M^\prime = F( T \boxtimes M ) \otimes F( T^\prime \boxtimes M^\prime )
\end{multline*}
for tilting $\G$-modules $T$ and $T^\prime$ and $\G$-modules $M$ and $M^\prime$ in $\Rep_0(\G)$, and we have
\[ F( \kk \boxtimes \kk ) = \kk \otimes \kk \cong \kk . \]
It is straightforward to check that these coherence maps endow $F$ with the structure of a (strong) monoidal functor.

Since $\regRep(\G)$ is an additive $\kk$-linear category, the functor $F$ extends uniquely to an additive $\kk$-linear functor
\[ \hat F \colon \Ver(\G) \boxtimes \regRep_0(\G) \longrightarrow \regRep(\G) , \]
and the coherence maps for $F$ extend canonically to coherence maps for $\hat F$, so that $\hat F$ becomes a monoidal functor.

\begin{Theorem} \label{thm:regularquotient}
	The functor
	\[ \hat F \colon \Ver(\G) \boxtimes \regRep_0(\G) \longrightarrow \regRep(\G) , \]
	together with the canonical coherence maps, is an equivalence of $\kk$-linear monoidal categories.
\end{Theorem}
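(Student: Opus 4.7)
The monoidal coherence maps for $\hat F$ have already been constructed immediately before the theorem, so the main work is to verify that $\hat F$ is an equivalence of the underlying $\kk$-linear categories. I would prove essential surjectivity and full faithfulness separately, and then remark that the monoidal coherence axioms are inherited from the associativity and braiding of $\Rep(\G)$.

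For essential surjectivity, given an arbitrary $\G$-module $N$, the linkage principle yields $N \cong \bigoplus_{\nu \in \overline{C}_\mathrm{fund} \cap X} \pr_\nu N$, and by Lemma \ref{lem:singularidealsingularlinkageclasses} the summands from $\ell$-singular linkage classes vanish in $\regRep(\G)$. For each remaining $\nu \in C_\mathrm{fund} \cap X$, I set $M_\nu \coloneqq T_\nu^0 \pr_\nu N \in \Rep_0(\G)$. Then Corollary \ref{cor:regularparttranslation} applied with $\omega = e$ gives $(T(\nu) \otimes M_\nu)_\mathrm{reg} \cong (T_0^\nu M_\nu)_\mathrm{reg} \cong \pr_\nu N$, so $T(\nu) \otimes M_\nu \cong \pr_\nu N$ in $\regRep(\G)$. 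Summing over $\nu$ exhibits $N$ as $\hat F\bigl( \bigoplus_\nu T(\nu) \boxtimes M_\nu \bigr)$.

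For full faithfulness, additivity reduces the problem to checking that $\hat F$ induces a bijection
\[ \Hom_{\Ver(\G)}(T(\nu), T(\nu')) \otimes \Hom_{\regRep_0(\G)}(M, M') \longrightarrow \Hom_{\regRep(\G)}(T(\nu) \otimes M, T(\nu') \otimes M') \]
for $\nu, \nu' \in C_\mathrm{fund} \cap X$ and $M, M' \in \Rep_0(\G)$. When $\nu \neq \nu'$, both sides vanish: the left side because the $T(\nu)$ are pairwise non-isomorphic simple objects of the semisimple category $\Ver(\G)$, the right side because the regular parts $(T(\nu) \otimes M)_\mathrm{reg}$ and $(T(\nu') \otimes M')_\mathrm{reg}$ lie in distinct linkage classes by Lemma \ref{lem:regularpartlinkageclass} and hence admit no nonzero homomorphisms in $\Rep(\G)$ (so also not in $\regRep(\G)$). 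When $\nu = \nu'$, the identity $T(\nu) = L(\nu)$ (since $\nu$ lies in the interior of the fundamental alcove) combined with Schur's lemma gives $\End_{\Ver(\G)}(T(\nu)) = \kk$, so the left side equals $\Hom_{\regRep_0(\G)}(M, M')$. For the right side, I use Corollary \ref{cor:regularparttranslation} to produce an isomorphism $T(\nu) \otimes M \cong T_0^\nu M$ in $\regRep(\G)$, whereupon $T_\nu^0$ identifies the Hom-space with $\Hom_{\regRep_0(\G)}(M, M')$. A direct diagram chase then confirms that the map induced by $\hat F$ is this identification.

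The key technical point, and the main obstacle I anticipate, is showing that the translation equivalence $T_\nu^0 \colon \Rep_\nu(\G) \to \Rep_0(\G)$ descends to an equivalence on the regular quotients, i.e.\ that a morphism in $\Rep_\nu(\G)$ factors through a singular $\G$-module exactly when its image in $\Rep_0(\G)$ does. This requires two observations: first, that any morphism between objects in $\Rep_\nu(\G)$ factoring through a singular $\G$-module $X$ automatically factors through the singular $\G$-module $\pr_\nu X$ (using that the singular ideal is closed under retracts); and second, that $T_\nu^0$ sends singular $\G$-modules in $\Rep_\nu(\G)$ to singular $\G$-modules in $\Rep_0(\G)$ and vice versa, which follows from the fact that $T_\nu^0 X$ is a direct summand of $X \otimes T(\mu)$ for an appropriate weight $\mu$, together with the fact that singular $\G$-modules form a thick tensor ideal. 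Once these identifications are in place, the monoidal nature of $\hat F$ follows essentially formally from the construction of the coherence isomorphisms via the braiding of $\Rep(\G)$.
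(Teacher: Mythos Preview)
Your proposal is correct and rests on the same ingredients as the paper's proof: Corollary~\ref{cor:translationquotient} (to identify $T(\nu)\otimes-$ with $T_0^\nu$ in $\regRep(\G)$), the linkage principle together with Lemma~\ref{lem:singularidealsingularlinkageclasses} (to discard $\ell$-singular linkage classes), and the observation that the translation equivalences $T_0^\nu$, $T_\nu^0$ descend to equivalences between $\regRep_0(\G)$ and $\regRep_\nu(\G)$. The difference is organizational rather than mathematical: instead of separately establishing essential surjectivity and full faithfulness (with a case split $\nu=\nu'$ versus $\nu\neq\nu'$), the paper writes down an explicit quasi-inverse
\[
G=\bigoplus_{\lambda\in C_\mathrm{fund}\cap X} T(\lambda)\boxtimes T_\lambda^0\circ\pr_\lambda(-)
\]
and checks $\hat F\circ G\cong\id$ and $G\circ\hat F\cong\id$ directly. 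Your essential-surjectivity argument is precisely the object-level content of $\hat F\circ G\cong\id$, and your full-faithfulness argument unpacks what $G\circ\hat F\cong\id$ says on morphisms; the paper's packaging avoids the case analysis but relies on the same naturality you invoke. One small remark: for the ``diagram chase'' in the $\nu=\nu'$ case you should cite Corollary~\ref{cor:translationquotient} (the natural isomorphism of functors) rather than Corollary~\ref{cor:regularparttranslation} (which is only an isomorphism of objects), since you need naturality in $M$ to identify the map induced by $\hat F$ with the translation equivalence.
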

\begin{proof}
	Recall that for $\lambda \in C_\mathrm{fund} \cap X$, we write $\regRep_\lambda(\G)$ for the essential image of $\Rep_\lambda(\G)$ under the quotient functor $q \colon \Rep(\G) \to \regRep(\G)$.
	The projection functor $\pr_\lambda \colon \Rep(\G) \to \Rep_\lambda(\G)$ induces a functor $\pr_\lambda \colon \regRep(\G) \to \regRep_\lambda(\G)$ and the translation functor $T_0^\lambda$ induces an equivalence between $\regRep_0(\G)$ and $\regRep_\lambda(\G)$ with quasi inverse induced by $T_\lambda^0$.
	We consider the functor
	\[ G = \bigoplus_{\lambda \in C_\mathrm{fund} \cap X} T(\lambda) \boxtimes T_\lambda^0 \circ \pr_\lambda( - ) \; \colon \quad \regRep(\G) \longrightarrow \Ver(\G) \boxtimes \regRep_0(\G) \]
	and claim that $G$ is a quasi-inverse for $\hat F$.
	Indeed, we have an isomorphism of functors
	\begin{multline*}
	\quad \hat F \circ G
	= \hat F \circ \Big( \bigoplus_\lambda T(\lambda) \boxtimes T_\lambda^0 \circ \pr_\lambda( - ) \Big)
	= \bigoplus_\lambda T(\lambda) \otimes T_\lambda^0 \circ \pr_\lambda( - ) \\
	\cong \bigoplus_\lambda T_0^\lambda \circ T_\lambda^0 \circ \pr_\lambda( - )\
	\cong \bigoplus_\lambda \pr_\lambda( - )
	\cong \id_{\regRep(\G)} , \quad
	\end{multline*}
	where the first two equality signs follow from the definitions.
	The first isomorphism follows from Corollary \ref{cor:translationquotient} because $\big( T(\lambda) \otimes - \big) \cong T_0^\lambda$ as functors from $\regRep_0(\G)$ to $\regRep(\G)$, the second isomorphism follows from the fact that $T_0^\lambda$ is a quasi-inverse of $T_\lambda^0$, and the third isomorphism follows from the linkage principle and the fact that all $\G$-modules in $\ell$-singular linkage classes are singular (see Lemma \ref{lem:singularidealsingularlinkageclasses}).
	Conversely, for $\lambda \in C_\mathrm{fund} \cap X$ and $M$ a $\G$-module in $\Rep_0(\G)$, we have
	\[ G \circ \hat F\big( T(\lambda) \boxtimes M \big) = G\big( T(\lambda) \otimes M \big) \cong G( T_0^\lambda M ) = T(\lambda) \boxtimes T_\lambda^0 T_0^\lambda M \cong T(\lambda) \boxtimes M \]
	in $\Ver(\G) \boxtimes \regRep_0(\G)$, again by Corollary \ref{cor:translationquotient}.
	As $\Ver(\G)$ is semisimple with simple objects $T(\lambda)$, for $\lambda \in C_\mathrm{fund} \cap X$, this gives rise to a natural isomorphism between $G \circ \hat F$ and the identity functor on $\Ver(\G) \boxtimes \regRep_0(\G)$, whence $G$ is a quasi inverse of $\hat F$, as claimed.
	Thus $\hat F$ is an equivalence of categories, and even an equivalence of monoidal categories since $\hat F$ is monoidal (see Proposition 4.4.2 in \cite{Rivano}).
\end{proof}

\begin{Remark}
	Let us write $\Ver_{\Z\Phi}(\G)$ for the full subcategory of $\Ver(\G)$ whose objects are the direct sums of indecomposable tilting modules with highest weights in $\Z\Phi$, and $\Ver_\Omega(\G)$ for the full subcategory of $\Ver(\G)$ whose objects are isomorphic to direct sums of tilting modules of the form $T(\omega\Cdot0)$ for certain $\omega \in \Omega$.
	Then $\Ver_{\Z\Phi}(\G)$ is closed under tensor products in $\Ver(\G)$ because $\Z\Phi$ is a subgroup of $X$ and $\Ver_\Omega(\G)$ is closed under tensor products in $\Ver(\G)$ by Lemma \ref{lem:VerlindeFundamentalGroup}.
	There is an equivalence of $\kk$-linear monoidal categories
	\begin{equation} \label{eq:Verlindedecomposition}
	\Ver(\G) \cong \Ver_{\Z\Phi}(\G) \boxtimes \Ver_\Omega(\G) .
	\end{equation}
	Arguing as in the proof of Theorem \ref{thm:regularquotient}, we further obtain equivalences of $\kk$-linear monoidal categories
	\[ \regRep(\G) \cong \Ver_{\Z\Phi}(\G) \boxtimes \regRep_{\Omega\Cdot0}(\G) \qquad \text{and} \qquad \regRep_{\Omega\Cdot0}(\G) \cong \Ver_\Omega(\G) \boxtimes \regRep_0(\G) . \]
	Note that the latter gives a categorical interpretation of Lemma \ref{lem:regularpartfundamentalgroup}.
	The above equivalences combine to an equivalence of $\kk$-linear monoidal categories
	\[ \regRep(\G) \cong \Ver_{\Z\Phi}(\G) \boxtimes \Ver_\Omega(\G) \boxtimes \regRep_0(\G) , \]
	which can also be obtained from Theorem \ref{thm:regularquotient} and equation \eqref{eq:Verlindedecomposition}
\end{Remark}

\section{Examples} \label{sec:examples}

In this Section, let $\G$ be of type $\mathrm{A}_2$, with simple roots $\Pi = \{ \alpha_1 , \alpha_2 \}$ .
Let $\varpi_1 , \varpi_2 \in X^+$ be the fundamental dominant weights with $(\varpi_i,\alpha_j^\vee) = \delta_{ij}$, and let $s_1 = s_{\alpha_1}$ and $s_2 = s_{\alpha_2}$ be the simple reflections in $W_\mathrm{fin}$.
The affine Weyl group $W_\mathrm{aff}$ is generated by the simple reflections $S = \{ s_0 , s_1 , s_2 \}$, where $s_0 = t_{\alpha_\mathrm{h}} s_{\alpha_\mathrm{h}}$ and $\alpha_\mathrm{h} = \alpha_1 + \alpha_2$.
For all $x \in W_\mathrm{aff}^+$, we have
\[ L(x \Cdot 0) \otimes L(0) \cong L(x \Cdot 0) \cong \big( L(x \Cdot 0) \otimes L(0) \big)_\mathrm{reg}  \]
because $L(x\Cdot0)$ is regular by Lemma \ref{lem:regularquotientregularweight}, and using Theorem \ref{thm:regularparttensorproduct}, it follows that
\[ \big( L(x\Cdot\lambda) \otimes L(\mu) \big)_\mathrm{reg} \cong \bigoplus_{\nu \in C_\mathrm{fund} \cap X} T_0^\nu L(x\Cdot0)^{\oplus c_{\lambda,\mu}^\nu} \cong \bigoplus_{\nu \in C_\mathrm{fund} \cap X} L(x\Cdot\nu)^{\oplus c_{\lambda,\mu}^\nu} \]
for $\lambda,\mu \in C_\mathrm{fund} \cap X$.
So far, we have not used the fact that $\G$ is of type $\mathrm{A}_2$.
Let us next discuss a less trivial example.

A weight $\lambda \in X^+$ is called $\ell$-restricted if $\lambda = a \varpi_1 + b \varpi_2$ with $0 \leq a,b \leq p-1$.
The set of indecomposable direct summands of tensor products of simple $\G$-modules with $\ell$-restricted highest weight has been determined by C.\ Bowman, S.\ Doty and S.\ Martin in \cite{BowmanDotyMartin}.
They show that every indecomposable direct summand of such a tensor product is either an indecomposable tilting module or a $\G$-module of the form $M(\nu)$, for $\nu \in C_\mathrm{fund} \cap X$, whose radical series (and socle series) is displayed in the Alperin diagram below.%
\footnote{In \cite{BowmanDotyMartin}, the $\G$-module $M(\nu)$ is labeled by $s_0\Cdot\nu$  instead of $\nu \in C_\mathrm{fund} \cap X$.}
(We replace simple modules $L(x\Cdot\nu)$ by their label $x \in W_\mathrm{aff}^+$.)
\[ M(\nu) = 
	\begin{tikzpicture}[scale=.6,baseline={([yshift=-.5ex]current bounding box.center)}]
	\node (C1) at (0,0) {\small $s_0$};
	\node (C2) at (-1.5,1.5) {\small $s_0s_1$};
	\node (C3) at (0,1.5) {\small $e$};
	\node (C4) at (1.5,1.5) {\small $s_0s_2$};
	\node (C5) at (0,3) {\small $s_0$};
	
	\draw (C1) -- (C2);
	\draw (C1) -- (C3);
	\draw (C1) -- (C4);
	\draw (C2) -- (C5);
	\draw (C3) -- (C5);
	\draw (C4) -- (C5);
	\end{tikzpicture} \]
A straightforward character computation shows that in the Grothendieck group of $\Rep(\G)$, we have
\[ [ M(\nu) ] = [ T(s_0s_1\Cdot\nu) ] + [ T(s_0s_2\Cdot\nu) ] - 2 \cdot [ T(s_0\Cdot\nu) ] + 3 \cdot [ T(\nu) ] . \]
Since we also have $[M(\nu)] = \sum_i (-1)^i \cdot \big[ C_\mathrm{min}\big(M(\nu)\big)_i \big]$, it follows that $M(\nu)$ is regular.

With $\omega = t_{\varpi_1} s_1 s_2$, we have $\Omega = \{ e , \omega , \omega^{-1} \}$ and
\[ \omega \Cdot 0 = (\ell-3) \cdot \varpi_1 , \qquad \omega^{-1} \Cdot 0 = (\ell-3) \cdot \varpi_2 . \]
According to Sections 8.1--8.6 in \cite{BowmanDotyMartin}, we have
\[ \pr_{\omega^{-1}\Cdot0}\big( L(s_0\omega\Cdot0) \otimes L(s_0\omega\Cdot0) \big) \cong M( \omega^{-1} \Cdot 0 ) \oplus L( \omega^{-1} \Cdot 0 ) , \]
and as both $M( \omega^{-1} \Cdot 0 )$ and $L( \omega^{-1} \Cdot 0 )$ are regular, Lemmas \ref{lem:regularpartlinkageclass} and \ref{lem:regularpartfundamentalgroup} imply that
\begin{equation} \label{eq:regularpartA2s0s0}
\big( L(s_0\Cdot0) \otimes L(s_0\Cdot0) \big)_\mathrm{reg} \cong M(0) \oplus L(0) .
\end{equation}
Now Theorem \ref{thm:regularparttensorproduct} yields
\[ \big( L(s_0 \Cdot \lambda) \otimes L(s_0 \Cdot \mu) \big)_\mathrm{reg} \cong \bigoplus_{\nu \in C_\mathrm{fund} \cap X} \big( M(\nu) \oplus L(\nu) \big)^{\oplus c_{\lambda,\mu}^\nu} \]
for all $\lambda,\mu \in C_\mathrm{fund} \cap X$.

Next observe that we have $\ell \varpi_1 = t_{\varpi_1} \Cdot 0 = s_0 s_2 \omega \Cdot 0$ and $\ell \varpi_2 = t_{\varpi_2} \Cdot 0 = s_0 s_1 \omega \Cdot 0$.
If $\ell \geq 5$ then  it is straightforward to see using the Frobenius twist functor (see Sections II.3.16 and II.H.7 in \cite{Jantzen}) that
\[ L(\ell\varpi_1) \otimes L(\ell\varpi_2) \cong L(\ell\alpha_\mathrm{h}) \oplus L(0) , \]
where $\ell\alpha_\mathrm{h} = t_{\alpha_\mathrm{h}} \Cdot 0 = s_0 s_1 s_2 s_1 \Cdot 0$,
and arguing as before, we see that
\[ \big( L(s_0s_1\Cdot\lambda) \otimes L(s_0s_2\Cdot\mu) \big)_\mathrm{reg} \cong \bigoplus_{\nu \in C_\mathrm{fund} \cap X} \big( L(\nu+\ell\alpha_\mathrm{h}) \oplus L(\nu) \big)^{\oplus c_{\lambda,\mu}^\nu} \]
for $\lambda,\mu \in C_\mathrm{fund} \cap X$.
Analogously, the tensor product decomposition
\[ L(\ell\varpi_1) \otimes L(\ell\varpi_1) \cong L(2\ell\varpi_1) \oplus L(\ell\varpi_2) \]
can be used to show that
\[ \big( L(s_0s_1\Cdot\lambda) \otimes L(s_0s_1\Cdot\mu) \big)_\mathrm{reg} \cong \bigoplus_{\nu \in C_\mathrm{fund} \cap X} \big( L(s_0s_1s_2s_0 \Cdot \nu) \oplus L(s_0s_2 \Cdot \nu) \big)^{\oplus c_{\lambda,\mu}^\nu} \]
for $\lambda,\mu \in C_\mathrm{fund} \cap X$, even if we drop the additional assumption $\ell \geq 5$.

\begin{Remark}
	We call the $\G$-module $M(0)$ in \eqref{eq:regularpartA2s0s0} a \emph{generic direct summand} of $L(s_0\Cdot0) \otimes L(s_0\Cdot0)$, because $M(\nu) \cong T_0^\nu M(0)$ appears \emph{generically} in Krull--Schmidt decompositions of $L(s_0\Cdot\lambda) \otimes L(s_0\Cdot\mu)$, for $\lambda,\mu \in C_\mathrm{fund} \cap X$, with multiplicity $c_{\lambda,\mu}^\nu$.
	More details about generic direct summands and further examples will appear in a forthcoming paper.
\end{Remark}

\bibliographystyle{alpha}
\bibliography{tensor}

\end{document}